\documentclass{amsart}

\newcommand{\HH}{\mathbb{H}}

\newcommand{\II}{\mathbb{I}}

\newcommand{\e}{\mathbf{e}}
\newcommand{\RR}{\mathbb{R}}

\newcommand{\Hc}{\mathbb{H}_{\text{cyc}}}
\newcommand{\Ic}{I_{\text{cyc}}}
\newcommand{\Jc}{J_{\text{cyc}}}
\newcommand{\Hr}{\mathbb{H}_{\text{reg}}}
\DeclareMathOperator{\cyc}{cyc}
\DeclareMathOperator{\reg}{reg}

\usepackage[colorlinks=true,allcolors=blue,final]{hyperref}
\hypersetup{
    hypertexnames=false, 
}
\usepackage[capitalize,nameinlink,noabbrev]{cleveref}

\usepackage{aliascnt}

\newtheorem{theorem}{Theorem}[section]

\newtheorem{thm}{Theorem}

\newaliascnt{lemma}{theorem}
\newtheorem{lemma}[lemma]{Lemma}
\aliascntresetthe{lemma}

\newaliascnt{remark}{theorem}
\newtheorem{remark}[remark]{Remark}
\aliascntresetthe{remark}

\newtheorem*{remark*}{Remark}

\newaliascnt{corollary}{theorem}

\aliascntresetthe{corollary}

\newaliascnt{proposition}{theorem}
\newtheorem{proposition}[proposition]{Proposition}
\aliascntresetthe{proposition}

\newaliascnt{conjecture}{theorem}

\aliascntresetthe{conjecture}

\theoremstyle{definition}

\newaliascnt{definition}{theorem}
\newtheorem{definition}[definition]{Definition}
\aliascntresetthe{definition}

\newaliascnt{example}{theorem}
\newtheorem{example}[example]{Example}
\aliascntresetthe{example}

\newaliascnt{condition}{theorem}

\aliascntresetthe{condition}

\newtheorem{claim}{Claim}[theorem]

\newcommand{\defn}[1]{\emph{\textcolor{gray}{#1}}}

\usepackage[margin=3.3cm]{geometry}
\usepackage[all]{xy}
\usepackage[dvipsnames]{xcolor}
\usepackage{adjustbox,algorithm,algpseudocode,alphalph,amsmath,amssymb,caption,color,soul,dsfont,enumerate,float,multirow,graphicx,subcaption,mathrsfs,natbib,pgfplots,ragged2e,tikz,tikz-cd,tkz-graph,todonotes,ulem,thmtools,mathtools,pifont}
\algrenewcommand\algorithmicensure{\textbf{Output:}}
\algrenewcommand\algorithmicrequire{\textbf{Input:}}
\usetikzlibrary{arrows.meta,bending,graphs,arrows,backgrounds,external,intersections,decorations.markings,shapes.geometric,tikzmark}
\pgfplotsset{compat=newest}
    \usetikzlibrary{intersections,pgfplots.fillbetween}
    \usepgfplotslibrary{fillbetween}
    \pgfdeclarelayer{pre main}

\title[Lattice characterization of cyclic interval hypergraphic posets]{Lattice characterization of cyclic interval hypergraphic posets}

\author[F.~Gélinas]{Félix Gélinas}
\address[F.~Gélinas]{York University, Toronto}
\email{felixgel@yorku.ca}
\urladdr{https://felixgelinas.github.io/}

\author[Y.~Yang]{Yirong Yang}
\address[Y.~Yang]{University of Washington, Seattle}
\email{yyang1@uw.edu}
\urladdr{https://sites.google.com/view/yirongyang/}

\begin{document}

\begin{abstract}
   Hypergraphic polytopes $\Delta_\HH$ arise as Minkowski sums of simplices indexed by the hyperedges of a hypergraph $\HH$. Orienting the $1$-skeleton of such a polytope by a certain generic linear functional gives rise to the hypergraphic poset $P_\HH$. Hypergraphic posets include the weak order for the permutahedron and the Tamari lattice for the associahedron. This motivates the problem of determining when $P_\HH$ is a lattice. In this paper, we give a complete lattice characterization for cyclic interval hypergraphs, extending the result of Bergeron and Pilaud for interval hypergraphs, and the result of Adenbaum et al. for the complete cyclic interval hypergraph.
\end{abstract}

\maketitle
\normalem

\tableofcontents

\section{Introduction}\label{sec:intro}

Let $(\e_i)_{i \in [n]}$ denote the standard basis of $\RR^n$ for $n \ge 1$. For a hypergraph $\HH$ on $[n]:=\{1, \dots, n\}$, define its \emph{hypergraphic polytope} $\Delta_\HH$ to be the Minkowski sum $\sum_{H \in \HH} \Delta_{H}$, where $\Delta_{H}$ is the $(|H|-1)$-dimensional simplex on vertices $\{\e_{h} \ | \ h \in H\}$. Hypergraphic polytopes were introduced simultaneously in \cite{BenedettiBergeronMachacek2019} and \cite{AguiarArdila2023} and have been studied extensively (see for example~\cite{FeichtnerSturmfels2005, Postnikov2009, PostnikovReinerWilliams2008, AgnarssonMorris2009, Agnarsson2017, AguiarArdila2023, BenedettiBergeronMachacek2019, PadrolPilaudPoullot2022, Rehberg2022, CardinalSteiner2025, CardinalHoangMerinoMickaMutze2023, BergeronPilaud2026, AbramBastidasGelinasPilaudSack2025, Gelinas2025}). Notably, $\Delta_\HH$ is the classical permutahedron when $\HH$ is the complete graph ${[n] \choose 2}$, the associahedron~\cite{ShniderSternberg1993, Loday2004} when $\HH$ is the complete interval hypergraph $\{[i, j] \ | \ 1 \le i < j \le n\}$, a graphical zonotope when $\HH \subseteq {[n] \choose 2}$, the nestohedra~\cite{FeichtnerSturmfels2005, Postnikov2009} when $\HH$ is a building set, and the cyclohedron~\cite{bott1994self} when $\HH$ is the complete cyclic interval hypergraph.

Given a polytope, it is natural to study the directed acyclic graph obtained from its $1$-skeleton by orienting each edge according to a generic linear functional. For example, the acyclic orientation corresponds to certain shellings of the dual polytope, and is a key ingredient in Kalai's proof~\cite{Kalai1988} that all simple polytopes are combinatorially determined by their $1$-skeleta. For a discrete Morse theoretic interpretation of this orientation, we refer the reader to~\cite{Chari2000}. 

Bergeron and Pilaud~\cite{BergeronPilaud2026} defined the \emph{hypergraphic poset} $P_\HH$ to be the \emph{transitive closure} of the $1$-skeleton of $\Delta_\HH$ oriented in the direction $\mathbf{\omega} := (n, n-1, \dots, 2, 1) - (1, 2, \dots, n-1, n) = (n-1, n-3, \dots, 3-n, 1-n)$. In particular, the Hasse diagram of $P_\HH$ is isomorphic to the \emph{transitive reduction} of this oriented $1$-skeleton. This definition recovers several well-known lattices. For example, if $\HH = {[n] \choose 2}$ (so that $\Delta_\HH$ is the permutahedron), then $P_\HH$ is the weak order on permutations; if $\HH = \{[i, j] \ | \ 1 \le i < j \le n\}$ (so that $\Delta_\HH$ is the associahedron), then $P_\HH$ is the Tamari lattice on binary trees~\cite{Tamari1951}. This motivates the question of characterizing the hypergraphs $\HH$ for which $P_\HH$ is a lattice, a distributive lattice, a semidistributive lattice, etc. This question has been answered for specific families of hypergraphs, such as graphs~\cite{Pilaud2024}, \emph{interval hypergraphs}~\cite{BergeronPilaud2026}, \emph{intreeval} hypergraphs~\cite{AbramBastidasGelinasPilaudSack2025}, and those whose hypergraphic polytopes are the multiplihedra and constrainahedra~\cite{ChapotonPilaud2024}. It has also been partially answered for graph associahedra~\cite{BarnardMcConville2021}.

A \emph{cyclic interval hypergraph} on $[n]$ is a hypergraph where every hyperedge is of the form $[i,j]$ or $[j,n] \cup [1, i]$ for some $1 \le i < j \le n$. In this paper, we present a complete lattice characterization of cyclic interval hypergraphic posets, generalizing that of \emph{interval hypergraphic posets} in~\cite{BergeronPilaud2026}. 

\begin{thm} \label{thmcyclic}
    Let $\HH$ be a cyclic interval hypergraph on $[n]$. The hypergraphic poset $P_\HH$ is a lattice if and only if for all $1 \le x < y \le n$, the restricted hypergraph $\HH' := \{H \cap [x, y] \ | \ H \in \HH\} \setminus \{\emptyset\}$ satisfies the following conditions:
    \begin{enumerate}
        \item the set $\HH'_{\reg}$ of all regular hyperedges in $\HH'$ is closed under intersection;
        \item every hugging quadruple of $\HH'$ has a fix.
    \end{enumerate}
\end{thm}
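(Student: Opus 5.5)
The plan is to follow the strategy of Bergeron and Pilaud for interval hypergraphs and reduce everything to intervals $[x,y]$. I would rely on the standard facts (from~\cite{BergeronPilaud2026}) that $P_\HH$ is isomorphic to a poset on acyclic orientations of $\HH$, with one chosen sink in each hyperedge. For each interval $[x,y]$, restriction gives an order-preserving surjection $P_\HH \to P_{\HH'}$, whose image is the interval of $P_\HH$ corresponding to fixing the orientation outside $[x,y]$. More precisely, I would first prove a \emph{restriction lemma}: for every $1\le x<y\le n$, $P_{\HH'}$ is isomorphic to an interval (or at least a retract preserving meets and joins) of $P_\HH$. Since intervals of lattices are lattices, the necessity direction then reduces to the case $[x,y]=[1,n]$. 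Here the definitions of \emph{regular} hyperedges and \emph{hugging quadruples} matter: after restricting to $[x,y]$, a cyclic hyperedge $[j,n]\cup[1,i]$ becomes $[j,y]\cup[x,i]$, which is still cyclic unless it collapses to an interval.

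\textbf{Necessity.} Suppose one of the two conditions fails for $\HH'=\HH$ restricted to $[x,y]$. I would build two explicit elements of $P_{\HH'}$ with two incomparable minimal upper bounds. First suppose $\HH'_{\reg}$ is not closed under intersection, say $H_1=[a,b]$ and $H_2=[c,d]$ with $a<c\le b<d$ and $[c,b]\notin\HH'$. Then I would mimic the Bergeron--Pilaud construction: orient $H_1$ and $H_2$ with sinks chosen so that joining forces competing choices inside $[c,b]$, and take all other hyperedges as small as possible. Next suppose a hugging quadruple has no fix. Then I would use the same configuration as in Adenbaum et al.\ for the cyclohedron: the two cyclic hyperedges wrapping around, together with the two intervals they hug, yield a pentagon-free ``bowtie'' of two minimal upper bounds. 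In both cases I need to check that the relevant acyclic orientations exist and that no third upper bound lies below both. That check is a finite local computation, which I would carry out using the cover-relation description of $P_\HH$ (flipping a sink within one hyperedge).

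\textbf{Sufficiency.} Assume the conditions hold for all $[x,y]$. I would prove that $P_\HH$ is a lattice by induction on $n$ and the number of cyclic hyperedges, using the lattice criterion of~\cite{BergeronPilaud2026}. That criterion says a finite bounded poset whose every interval has the property that ``any two elements covering a common element have a join'' is a lattice. Equivalently, it suffices to show that for every element $v$ and any two upper covers $v\lessdot u_1$, $v\lessdot u_2$, a join exists. This is a local statement about two flips, which involves at most two or three hyperedges and the smallest interval $[x,y]$ containing the relevant vertices. Regular hyperedges behave as in the interval case, so closure under intersection gives the join as in Bergeron--Pilaud. When a cyclic hyperedge is involved, the configuration restricted to $[x,y]$ is either regular or forms a hugging quadruple, and the fix supplies the hyperedge needed to complete the polygon $v\to u_1\to\cdots$, $v\to u_2\to\cdots$.

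The main obstacle will be this sufficiency case analysis for the cyclic hyperedges: the local polygons can be longer than in the interval case, and I must show that the fix of a hugging quadruple always suffices. I would first try to prove that, given the conditions, each local configuration reduces to a square, a pentagon, or the cyclohedral hexagon. Those are exactly the 2-faces of the Minkowski sum, since 2-faces of $\Delta_\HH$ come from pairs or triples of hyperedges. This ties the proof to the polytope geometry, where the oriented 2-faces give the polygons directly.
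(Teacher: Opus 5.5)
\textbf{Sufficiency: the join of two covers is not a local question.} The criterion you invoke is fine; it is due to Bj\"orner--Edelman--Ziegler: a finite bounded poset is a lattice once any two upper covers $u_1,u_2$ of a common $v$ have a join. The gap is the claim that existence of $u_1\vee u_2$ is ``a local statement about two flips.'' You must produce an acyclic orientation lying below \emph{every} common upper bound of $u_1,u_2$ in $P_\HH$, which is a global condition. The top of a $2$-face through $v,u_1,u_2$, when one exists, is always an upper bound. The whole difficulty is proving it is the least one, and your sketch has no mechanism for that.

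Such a $2$-face need not even exist, because $\Delta_\HH$ is not simple in general. Take $\HH=\{12,23,34,14\}$, which satisfies both conditions; $\Delta_\HH$ is combinatorially a rhombic dodecahedron. The orientation with sources $(1,3,3,1)$ on $(12,23,34,14)$ has upper covers $(2,3,3,1)$ and $(1,3,4,1)$. These lie on no common $2$-face, since raising both sources creates a directed $4$-cycle. Their join is the top element $(2,3,4,4)$, not the top of any polygon through them. It is also unclear how a configuration of ``two or three hyperedges'' produces a hugging quadruple, which needs four distinct hyperedges.

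\textbf{What the paper does instead.} The argument is global and rests on the componentwise order ($A\le B$ iff $A(H)\le B(H)$ for all $H$). It defines an explicit candidate $X^{AB}$, whose value on $H$ comes from chains of forced source increases (the $(H,\ell)$-sequences). Leastness follows from intersection-closure of the restricted regular hyperedges. Acyclicity is where the fix condition enters. After reducing minimal cycles to length $2$, a $2$-cycle of $X^{AB}$ yields a hugging quadruple on $D^1=[\ell-1,\tilde h_r]$, an interval determined by a whole chain of hyperedges. A fix does not complete anything locally; it only lets you pass to a smaller interval $D^2\subset D^1$ with a new quadruple, and the contradiction comes from finiteness. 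This is why the hypotheses are needed on all $[x,y]$, not only on intervals attached to a pair of flips.

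\textbf{Necessity.} Here you follow the paper's route: restriction to an interval of $P_\HH$, then a bowtie. However, the hugging-quadruple obstruction cannot be borrowed from Adenbaum et al. They prove the complete cyclic interval poset \emph{is} a lattice, and every quadruple there has a fix, so you must build the obstruction yourself. The paper uses $\mathcal{O}_\pi$ for $\pi=Y(n-1)12nX,\ Y1n2(n-1)X,\ Yn1(n-1)2X,\ Y2n(n-1)1X$. The componentwise order then forces any $M$ with $A,B\le M\le C,D$ to have one of four source patterns on $(I,\Ic,J,\Jc)$, all of them cyclic. Finally, do not describe covers as changing a single hyperedge's source. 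For $\HH=\{12,123\}$ one has $(1,1)\lessdot(2,2)$, since $(1,2)$ and $(2,1)$ are cyclic.
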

To avoid making the Introduction too technical, we defer introducing several definitions and notation used in the statement of the theorem to \Cref{sec:prelim}. 

We first note that \cite{BergeronPilaud2026} characterizes interval hypergraphic lattices as precisely those posets arising from interval hypergraphs that are closed under intersection. Therefore, when restricted to the interval case, our result coincides with their characterization. In \cite{BergeronPilaud2026}, the lattice characterization is obtained by realizing interval hypergraphic lattices as \emph{quasi-lattice quotients} of the Tamari lattice. Subsequently, the intreeval hypergraphic lattices are realized as the quasi-lattice quotients of the complete intreeval hypergraphic poset \cite{AbramBastidasGelinasPilaudSack2025}. It is thus tempting to extend this approach to the cyclic interval hypergraphs. However, the natural strategy of realizing cyclic interval lattices as quasi-lattice quotients of the complete cyclic interval hypergraphic poset (recently shown to be a lattice in \cite{AdenbaumBarnardHlavacekEtAl2025}) fails; check, for example, the hypergraph $\{12, 13, 14, 23, 24, 34, 124, 341\}$. Instead, we prove ~\Cref{thmcyclic} through explicit descriptions of joins and meets, given in \Cref{sec:join-meet-description}. These descriptions can also be reduced to the interval hypergraphic case studied in ~\cite{BergeronPilaud2026}.

While the hyperedges of an interval hypergraph $\II \subseteq \{[i, j] \ | \ 1 \le i < j \le n\}$ are all intervals in the usual sense (which we will call \emph{regular} hyperedges), a cyclic interval hyperpgraph can additionally have \emph{cyclic} hyperedges of the form $[j,n] \cup [1, i]$. These cyclic hyperedges substantially increase the difficulty of the lattice characterization problem. As mentioned above, \cite{AdenbaumBarnardHlavacekEtAl2025} shows that the complete cyclic interval hypergraphic poset is a lattice. This represents one example of our main result, which characterizes all cyclic interval hypergraphic lattices using a simpler and more efficient approach. Our proof is independent of their result and does not rely on it.

To obtain a full characterization, we begin with the observation that if $P_\HH$ is a lattice, then $P_{\HH'}$ is a lattice as well. Then, the main idea is to treat the regular and cyclic parts of $\HH$ separately when finding obstructions to being a lattice. The ``regular part" of the characterization coincides with the characterization for interval hypergraphs. For the ``cyclic part" of the characterization, we study a special structure involving four hyperedges, two cyclic and two regular. 

Our proof uses an alternative definition of $P_\HH$ called the \emph{source sequence poset} of $\HH$~\cite{Gelinas2025}. We can equivalently view each element of $P_\HH$ as an acyclic orientation of $\HH$ instead of a vertex of $\Delta_\HH$ \cite{BenedettiBergeronMachacek2019}. This perspective helps us explicitly construct a join for every pair of elements in $P_\HH$ for the ``if" direction of \Cref{thmcyclic}.

Throughout the paper, we assume $\{i\} \in \HH$ for all $i \in [n]$ for convenience. This is because every singleton set of $\HH$ contributes only a translation by a standard basis vector in the Minkowski sum and therefore does not affect the combinatorial structure of $\Delta_\HH$ and $P_\HH$. 

\section{Hypergraphic posets}\label{sec:prelim}

In this section, we introduce the definitions and notation that will be used throughout the paper. For background on \defn{partially ordered sets} (posets), we refer the reader to \cite[Ch. 3]{stanley2012enumerative}.

A \defn{hypergraph} $\HH$ on $[x, y]:= \{x, x+1, \dots, y-1, y\}$ is a set of subsets of $[x, y]$. The elements of $\HH$ are called \defn{hyperedges}. As previously mentioned, we assume all singletons $\{i\} \in \HH$ for $i \in [x, y]$. 

\begin{definition}
A \defn{cyclic interval hypergraph} $\HH$ on $[x,y]$ is a hypergraph where each $H \in \HH$ is of one of the two forms:
\begin{itemize}
    \item a \defn{regular hyperedge} $H = [i, j]$ such that $x \le i < j \le y$, and $H \ne [x, y]$. Let $\Hr$ denote the set of regular hyperedges in $\HH$.
    \item a \defn{cyclic hyperedge} $H = [j,y]\cup[x,i]$, where $x \le i < j \le y$. Let $\Hc$ denote the set of cyclic hyperedges in $\HH$. Moreover, we denote $H^- :=[x,i]$ and $H^+ := [j,y]$. In the case where $H = [x, y]$, then let $H^-= [x, y-1]$ and $H^+ = \{y\}$.
\end{itemize}
\end{definition}

Let $\HH$ be a hypergraph on $[x, y]$. We now introduce a combinatorial model for the hypergraphic poset $P_\HH$ defined in \Cref{sec:intro}. This is called the \defn{source sequence poset} of $\HH$. 

\begin{definition}\label{defn:orientation}
    An \defn{orientation} of $\HH$ is a map $A$ from $\HH$ to $[x,y]$ such that $A(H) \in H$ for all $H \in \HH$. We say that $A(H)$ is the \defn{source} of $H$ for each hyperedge $H$. The orientation $A$ is \defn{cyclic} if there is a sequence $S=(H_1, \dots , H_k)$ with $k \geq 2$ such that $A(H_{i+1}) \in H_i \setminus \{A(H_i)\}$ for $i \in [k - 1]$ and $A(H_1) \in H_k \setminus \{A(H_k)\}$. We say the orientation is \defn{acyclic} if no such sequence exists. 
\end{definition}

\begin{remark*}
    We mentioned before that the singleton elements of $\HH$ do not affect the combinatorial structure of $P_\HH$. Indeed, given any orientation $A$ and any singleton set $\{i\} \in \HH$, the source is always $A(\{i\}) = i$. Since $\{i\} \setminus A(\{i\}) = \emptyset$, the singleton sets does not affect the acyclicity of $A$. We will therefore omit any discussions about them in our proofs. 
\end{remark*}

\begin{definition}
Given a fixed order $(H_1, H_2,\dots, H_k)$ on the non-singleton elements of $\HH$, we describe an orientation $A$ with the tuple $S_A:=(A(H_1), A(H_2), \dots, A(H_k))$. This is called the \defn{source sequence} of $A$. 
\end{definition}

\begin{example}
    \Cref{orientation} illustrates the cyclic interval hypergraph $\HH=\{H_1=1256,H_2=123,H_3=23456\}$ and the graph of the orientation $A$ given by the following sources for each of its hyperedges:
    
    $$\begin{matrix}
        A(H_1)=6 && A(H_2)=2 && A(H_3)=4.
    \end{matrix}$$
\end{example}

\begin{figure}[H]
 \scalebox{0.8}{\begin{tikzpicture}[decoration={markings, mark=at position 0.5 with {\arrow{>}}}]
     
 \node at (-3.5,0) {\begin{tikzpicture}
 
\def\R{1.75}
\def\RR{1.5}

\draw[dashed,gray] (0,0) circle (\R);

\draw[line width=1.85pt, draw=red!70] (0,0) ++(300:2.25) arc[start angle=300, end angle=420, radius=2.25];

\draw[line width=1.85pt, draw=orange!70] (0,0) ++(0:2.5) arc[start angle=0, end angle=240, radius=2.5];

\draw[line width=1.85pt, draw=blue!30] (0,0) ++(180:2) arc[start angle=180, end angle=360, radius=2];



\node at ({\RR*cos(300)},{\RR*sin(300)}) {$1$};
\node at ({\RR*cos(360)},{\RR*sin(360)}) {$2$};
\node at ({\RR*cos(60)},{\RR*sin(60)}) {$3$};
\node at ({\RR*cos(120)},{\RR*sin(120)}) {$4$};
\node at ({\RR*cos(180)},{\RR*sin(180)}) {$5$};
\node at ({\RR*cos(240)},{\RR*sin(240)}) {$6$};

\coordinate (1) at ({\R*cos(300)},{\R*sin(300)});
\coordinate (2) at ({\R*cos(360)},{\R*sin(360)});
\coordinate (3) at ({\R*cos(60)},{\R*sin(60)});
\coordinate (4) at ({\R*cos(120)},{\R*sin(120)});
\coordinate (5) at ({\R*cos(180)},{\R*sin(180)});
\coordinate (6) at ({\R*cos(240)},{\R*sin(240)});

\end{tikzpicture}};


 \node at (3.5,0) {\begin{tikzpicture}
 
\def\R{1.75}
\def\RR{2.25}

\draw[dashed,gray] (0,0) circle (\R);






\draw[ultra thick,blue!30,postaction={decorate}] (6) to[bend left=45] (1);
\draw[ultra thick,blue!30,postaction={decorate}] (6) to[bend left=33] (2);
\draw[ultra thick,blue!30,postaction={decorate}] (6) to[bend right=45] (5);

\draw[ultra thick,red!70,postaction={decorate}] (2) to[bend right=45] (1);
\draw[ultra thick,red!70,postaction={decorate}] (2) to[bend left=45] (3);

\draw[ultra thick,orange!70,postaction={decorate}] (4) to[bend left=33] (6);
\draw[ultra thick,orange!70,postaction={decorate}] (4) to[bend left=45] (5);
\draw[ultra thick,orange!70,postaction={decorate}] (4) to[bend right=45] (3);
\draw[ultra thick,orange!70,postaction={decorate}] (4) to[bend right=33] (2);




\node at ({\RR*cos(300)},{\RR*sin(300)}) {$1$};
\node at ({\RR*cos(360)},{\RR*sin(360)}) {$2$};
\node at ({\RR*cos(60)},{\RR*sin(60)}) {$3$};
\node at ({\RR*cos(120)},{\RR*sin(120)}) {$4$};
\node at ({\RR*cos(180)},{\RR*sin(180)}) {$5$};
\node at ({\RR*cos(240)},{\RR*sin(240)}) {$6$};

\end{tikzpicture}};
\end{tikzpicture}}
\caption{The cyclic interval hypergraph $\HH=\{\color{blue!30}{1256}\color{black}{,}\, \color{red!70}{123}\color{black}{,}\,\color{orange!70}{23456}\color{black}{\}}$ on the left and the orientation $A$ given by the source sequence $S_A=(6,2,4)$ on the right. Note that this orientation is acyclic.}
\label{orientation}\end{figure}

Throughout this paper, we use the following characterization of the hypergraphic poset $P_\HH$.
\begin{theorem}[\cite{Gelinas2025}] \label{sourcecharacterization}
    For any acyclic orientations $A, B \in P_\HH$, we have $A \le B$ if and only if $A(H) \le B(H)$ for all $H \in \HH$. 
\end{theorem}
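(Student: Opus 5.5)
The plan is to use the geometric model of $P_\HH$ from \Cref{sec:intro} together with the correspondence of \cite{BenedettiBergeronMachacek2019} between vertices of $\Delta_\HH$ and acyclic orientations. An acyclic orientation $A$ corresponds to the vertex $v_A=\sum_{H\in\HH}\e_{A(H)}$. The proof has two directions. First, each edge of the oriented $1$-skeleton increases every source weakly. Second, every componentwise comparison $A\le B$ can be realized by a chain of such edges.

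\emph{Step 1 (edges).} I would first describe the edges of $\Delta_\HH$. An edge of a Minkowski sum is the sum of the faces of the summands that maximize a common generic functional on that edge. Each summand contributes either a vertex $\e_{A(H)}$ or a parallel segment. Since the summands are simplices, every nontrivial summand face is a segment $[\e_i,\e_j]$ for one and the same pair $\{i,j\}$. Hence the edge from $v_A$ to $v_B$ has direction $\e_j-\e_i$. Moreover, $B$ is obtained from $A$ by replacing the source $i$ by $j$ in some hyperedges $H\ni i,j$, and leaving every other source unchanged. Next I compute $\omega_k=n+1-2k$, which gives $\omega\cdot(\e_j-\e_i)=2(i-j)$. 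This sign fixes which endpoint is larger. With the convention of the paper, the edge increases exactly one source value, from $\min$ to $\max$ of $\{i,j\}$. Therefore every cover relation increases the sources componentwise. Taking the transitive closure gives the forward implication: $A\le B$ implies $A(H)\le B(H)$ for all $H$.

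\emph{Step 2 (converse, by induction).} I would induct on $\sum_H (B(H)-A(H))$. Suppose $A\ne B$ and $A(H)\le B(H)$ for all $H$. The goal is an acyclic orientation $A'$ that is adjacent to $A$ in $\Delta_\HH$, lies above it, and still satisfies $A'\le B$ componentwise. To build it, consider the directed graph $D_A$ on $[n]$ with an arc $A(G)\to g$ for each $G\in\HH$ and each $g\in G\setminus\{A(G)\}$. Since $A$ is acyclic, reachability in $D_A$ is a partial order. The edges of $\Delta_\HH$ at $v_A$ correspond to arcs $i\to j$ of $D_A$ that are cover relations of this reachability order. The corresponding flip changes the source to $j$ in every $G$ with $A(G)=i$ and $j\in G$. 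I take this as a standard fact about flips of acyclic orientations, or verify it via normal cones.

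\emph{Step 3 (choosing the flip; main obstacle).} This is the hard part. Pick $H$ with $A(H)<B(H)$ and set $i=A(H)$. There is a path in $D_A$ from $i$ to $B(H)>i$. I would choose $j>i$ to be an upper cover of $i$ in the reachability order that lies on such a path, and, among the candidates, take $j$ minimal.

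It remains to check that every $G$ with $A(G)=i$ and $j\in G$ satisfies $B(G)\ge j$. Suppose instead that $i\le B(G)<j$ for such a $G$. Then $B$ would have to orient $G$ out of some vertex strictly between $i$ and $j$. I would try to combine this with the interval structure of the hyperedges and with the inequalities $A\le B$ on the hyperedges along the chosen path. The aim is to produce a cyclic sequence in $B$, contradicting acyclicity of $B$, or else an element of $(i,j)$ that violates the minimality of $j$.

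This exchange argument is where I expect the real difficulty. If the minimal choice of $j$ does not work directly, my fallback is to choose the pair $(i,j)$ lexicographically extremal over all hyperedges with $A(H)<B(H)$. Once Step 3 is settled, the induction gives $A<A'\le B$ with a smaller potential, and hence $A\le B$ in $P_\HH$.
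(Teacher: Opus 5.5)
The paper itself gives no proof of this statement; it imports it from \cite{Gelinas2025}. So the only question is whether your argument stands on its own.

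Steps 1 and 2 are sound. The normal cone at $v_A$ is the order cone of the reachability order $\preceq_A$ of $D_A$, its facets are the cover relations, and the flip along $u\lessdot_A v$ changes exactly the hyperedges $G$ with $A(G)=u$ and $v\in G$.

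\textbf{Gap.} Step 3 carries the whole converse, and the rule you state there is false, not just unproven, because it starts from an arbitrary $H$ with $A(H)<B(H)$. Two examples show this.
\begin{itemize}
\item Take $\HH=\{123,13\}$ with $A(123)=2$, $A(13)=1$ and $B(123)=B(13)=3$. Choosing $H=123$ gives $i=2$, whose only upper cover is $1$ (since $2\to1\to3$). So there is no candidate $j>i$ at all.
\item Take the interval hypergraph $\HH=\{12,123,23\}$ with $A=(1,1,2)$ and $B=(1,3,3)$. Choosing $H=123$ gives $i=1$, whose only upper cover is $j=2$ (since $1\to2\to3$). The flip sends both $12$ and $123$ to source $2$, but $B(12)=1$.
\end{itemize}
In both examples your rule succeeds if you start from the other hyperedge, so everything hinges on a choice you have not specified. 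The second example consists of intervals, so interval structure cannot repair the rule. Besides, the statement concerns arbitrary hypergraphs, and cyclic hyperedges are not intervals.

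\textbf{Missing idea.} Make both choices extremal with respect to the orientations rather than the numerical order.
\begin{itemize}
\item Let $U=\{F : A(F)<B(F)\}$ and take $u$ maximal for $\preceq_A$ in $\{A(F) : F\in U\}$. Every hyperedge whose $A$-source lies strictly above $u$ is then outside $U$, so it has the same source in $B$. Hence every $D_A$-path starting strictly above $u$ is also a $D_B$-path.
\item Take $v$ minimal for $\preceq_B$ in $\{B(F) : F\in U,\ A(F)=u\}$. Then $v>u$, $u\to v$ in $D_A$, and $v\to u$ in $D_B$.
\item Let $A(G)=u$ and $y\in G$ with $u\prec_A y\preceq_A v$, so that $y\preceq_B v$. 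If $B(G)=u$, then $u\to y\preceq_B v\to u$ is a cycle in $D_B$. If $B(G)=w\notin\{u,y\}$, then $G\in U$ and $w\prec_B y\preceq_B v$ contradicts the minimality of $v$. Hence $B(G)=y$.
\item Take for $y$ an upper cover of $u$ below $v$, with the arc $u\to y$ witnessed by some $G$. Then $y$ lies in the set defining $v$ and $y\preceq_B v$, so $y=v$ and $u\lessdot_A v$.
\item Taking $y=v$ shows that every hyperedge changed by this flip satisfies $B(G)=v$.
\end{itemize}
So the flip is an increasing edge from $A$ to some $A'$ with $A'\le B$ componentwise and strictly smaller potential, and your induction closes.
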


\section{Key definition for the characterization}

In this section, we introduce a key structure in cyclic interval hypergraphs that will be used in our characterization. Let $\HH$ be a cyclic interval hypergraph on $[x, y]$.
\begin{definition}\label{def:hug}
    A collection of four distinct hyperedges $I, \Ic, J, \Jc$ is called a \defn{hugging quadruple} if:
    \begin{itemize}
        \item $I,J\in \Hr$ and $\Ic, \Jc\in \Hc$
        \item $\{x,x+1\}\subseteq I, \Ic$
        \item $\{y,y-1\}\subseteq J, \Jc$
    \end{itemize}
\end{definition}

\begin{definition} \label{def:hugquad}
    Let $\mathcal{Q}=\{I,\Ic,J,\Jc\}$ be a hugging quadruple. Then a \defn{fix} of $\mathcal{Q}$ is a hyperedge $H\in \HH$ such that $\{x+1,y-1\}\subseteq H \subseteq \bigcup_{K\in \mathcal{Q}} K$.
\end{definition} 

\begin{remark}
    Note that a fix of a hugging quadruple does not need to be distinct from the four hyperedges in the quadruple. 
\end{remark}

\begin{example} \label{ex:hugquad}
    \Cref{minimalhug} shows an example of two hypergraphs on $[1,4]$. The first one  is the hypergraph $\HH=\{12,124,34,134\}$ consisting of the following hugging quadruple:

    $$\begin{matrix}
        I=12, && \Ic=124, && J=34, && \Jc=134.
    \end{matrix}$$

    The second one is the hypergraph $\HH=\{12,124,23,34,134\}$ consisting of a hugging quadruple and its fix as follows:

    $$\begin{matrix}
        I=12, && \Ic=124, && J=34, && \Jc=134, && \textit{fix}=23.
    \end{matrix}$$

    \begin{figure}[H]
 \scalebox{0.95}{\begin{tikzpicture}
 
 \node at (-3.5,0) {\begin{tikzpicture}[scale=0.4, every node/.style={font=\small}]
\def\R{3}
\def\RR{2.5}
\node (v1) at ({\RR*cos(0)},{\RR*sin(0)}) {1};
\node (v2) at ({\RR*cos(90)},{\RR*sin(90)}) {2};
\node (v3) at ({\RR*cos(180)},{\RR*sin(180)}) {3};
\node (v4) at ({\RR*cos(270)},{\RR*sin(270)}) {4};
\draw[dashed,gray] (0,0) circle (\R);
\draw[line width=1.75pt, draw=red!70] (0,0) ++(0:3.50) arc[start angle=0, end angle=90, radius=3.50];
\draw[line width=1.75pt, draw=blue!30] (0,0) ++(180:3.50) arc[start angle=180, end angle=270, radius=3.50];
\draw[line width=1.75pt, draw=orange!70] (0,0) ++(270:4.50) arc[start angle=270, end angle=450, radius=4.50];
\draw[line width=1.75pt, draw=violet!50] (0,0) ++(180:5.00) arc[start angle=180, end angle=360, radius=5.00];

\end{tikzpicture}};

\node at (3.5,0) {\begin{tikzpicture}[scale=0.4, every node/.style={font=\small}]
\def\R{3}
\def\RR{2.5}
\node (v1) at ({\RR*cos(0)},{\RR*sin(0)}) {1};
\node (v2) at ({\RR*cos(90)},{\RR*sin(90)}) {2};
\node (v3) at ({\RR*cos(180)},{\RR*sin(180)}) {3};
\node (v4) at ({\RR*cos(270)},{\RR*sin(270)}) {4};
\draw[dashed,gray] (0,0) circle (\R);
\draw[line width=1.75pt, draw=red!70] (0,0) ++(0:3.50) arc[start angle=0, end angle=90, radius=3.50];
\draw[line width=1.75pt, draw=blue!30] (0,0) ++(180:3.50) arc[start angle=180, end angle=270, radius=3.50];
\draw[line width=1.75pt, draw=orange!70] (0,0) ++(270:4.50) arc[start angle=270, end angle=450, radius=4.50];
\draw[line width=1.75pt, draw=violet!50] (0,0) ++(180:5.00) arc[start angle=180, end angle=360, radius=5.00];
\draw[line width=1.75pt, draw=yellow] (0,0) ++(450:4.0) arc[start angle=450, end angle=540, radius=4.0];

\end{tikzpicture}};

\end{tikzpicture}}
\caption{The hypergraph $\HH=\{\textcolor{red!70}{12}, \textcolor{blue!30}{34}, \textcolor{orange!70}{124}, \textcolor{violet!70}{134}\}$ on the left consist of a hugging quadruple without a fix. The hypergraph $\HH=\{\textcolor{red!70}{12},\textcolor{yellow}{23}, \textcolor{blue!30}{34}, \textcolor{orange!70}{124}, \textcolor{violet!70}{134}\}$ on the right consist of a hugging quadruple and a fix. }
\label{minimalhug}\end{figure}
\end{example}

\begin{definition} \label{def:intersectionclosed}
    A hypergraph $\HH$ is \defn{closed under intersection} if for any hyperedges $H,H'\in \HH$ such that $H \cap H'\neq \emptyset$, their intersection $H \cap H'$ is a hyperedge in $\HH$.
\end{definition}

\section{Forward direction of \Cref{thmcyclic}}\label{nec}

To prove the forward direction of \Cref{thmcyclic}, we introduce a surjection $\mathcal{O}$, described in~\cite[Lem. $2.9$]{BenedettiBergeronMachacek2019}, from the set of permutations on $[n]$ to the acyclic orientations of a hypergraph on $[n]$, and a useful result which will form the crux of the proof.

\begin{definition} \label{surj}
    For a permutation $\pi$ of $[n]$, the \defn{orientation $\mathcal{O}_\pi$} of $\HH$ is defined for all $H \in \HH$ by
    \vspace{-0.05cm}
    $$\mathcal{O}_\pi(H):=\pi(\min\{j\ |\ \pi(j)\in H\}).$$
\end{definition}

\begin{proposition}[{\citetext{\citealp[Lem. 2.9]{BenedettiBergeronMachacek2019}; \citealp[Prop. 2.14]{BergeronPilaud2026}}}] \label{adjacent}
    For any hypergraph $\HH$ on $[n]$,
    \begin{itemize}
        \item the map $\mathcal{O}$ is a surjection from the permutations of $[n]$ to the acyclic orientations of $\HH$,
        \item the $1$-skeleton of $\Delta_\HH$ is isomorphic to the graph obtained by contracting the fibers of $\mathcal{O}$ in the graph of the permutahedron.
    \end{itemize} 
\end{proposition}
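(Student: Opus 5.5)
We need to prove the proposition: $\mathcal{O}$ is a surjection onto acyclic orientations, and the 1-skeleton of $\Delta_\HH$ is the permutahedron graph with the fibers of $\mathcal{O}$ contracted.

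The plan is to work with normal fans. The permutahedron is the Minkowski sum of all segments $\Delta_{\{i,j\}}$, so its normal fan is the braid arrangement. Every $\Delta_H$ is a Minkowski summand of a dilate of the permutahedron, hence so is $\Delta_\HH$. Therefore the normal fan of $\Delta_\HH$ is coarsened by the braid fan.

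\textbf{Step 1: identify the vertex hit by each chamber.} Take a generic functional $c$ in the braid chamber indexed by $\pi$. Use the convention that $c$ is largest on $\pi(1)$, then $\pi(2)$, and so on; equivalently, adopt whichever order convention matches the $\min$ in \Cref{surj}. On each simplex $\Delta_H$, $c$ is maximized at $\e_{h}$, where $h$ is the element of $H$ appearing first in $\pi$, namely $\mathcal{O}_\pi(H)$. Since the face of a Minkowski sum maximizing $c$ is the sum of the maximizing faces of the summands, the maximizing vertex of $\Delta_\HH$ is $v_\pi=\sum_H \e_{\mathcal{O}_\pi(H)}$. Every vertex of $\Delta_\HH$ is maximized by some generic $c$, so it has this form.

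\textbf{Step 2: vertices correspond to acyclic orientations.} First, $\mathcal{O}_\pi$ is acyclic. If a cycle $(H_1,\dots,H_k)$ existed, then $A(H_{i+1})\in H_i\setminus\{A(H_i)\}$ forces $A(H_i)$ to precede $A(H_{i+1})$ strictly in $\pi$. Following the cycle would make $A(H_1)$ strictly precede itself, which is impossible.

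Conversely, let $A$ be acyclic. Define a digraph on $[n]$ with an arc $A(H)\to h$ for every $h\in H\setminus\{A(H)\}$. Acyclicity of $A$ in the sense of \Cref{defn:orientation} is exactly acyclicity of this digraph. To check this, take a directed cycle $a_1\to a_2\to\dots\to a_k\to a_1$ and write $a_i=A(H_i)$ with $a_{i+1}\in H_i\setminus\{A(H_i)\}$; this is a cyclic sequence. Take $\pi$ to be a linear extension of the digraph. Then $A(H)$ precedes every other element of $H$ in $\pi$, so $\mathcal{O}_\pi=A$. This proves surjectivity.

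It remains to show that distinct acyclic orientations give distinct vertices, so that vertices biject with acyclic orientations. Suppose $\mathcal{O}_\pi\neq\mathcal{O}_\sigma$. Choose $c$ generic inside a common region where both are optimal; if $v_\pi=v_\sigma$, then the normal cones would share an interior chamber path. A cleaner route is to note that the fiber of $v$ is the set of $\pi$ whose chamber lies in the normal cone of $v$, and to show directly that this set equals $\{\pi:\mathcal{O}_\pi=A\}$. For this, the normal cone of $v_A$ is $\{c: c_{A(H)}\ge c_h \ \forall h\in H\}$, computed as the intersection of the normal cones of the summand vertices. Two orientations give the same point $v$ only if these cones coincide, which forces the orientations to agree.

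\textbf{Step 3: edges.} Two vertices of $\Delta_\HH$ are adjacent exactly when their normal cones share a codimension-one face. Each such wall is a union of braid walls between adjacent chambers $\pi$ and $\pi s_i$, and adjacent braid chambers lie in either the same or adjacent maximal cones. Hence edges of $\Delta_\HH$ are exactly the images of permutahedron edges joining different fibers, which is the contracted graph.

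The main obstacle is the injectivity part of Step 2 together with the matching of order conventions. I would handle it through the explicit normal-cone description $\{c:c_{A(H)}\ge c_h\}$.
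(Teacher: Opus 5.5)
Your proof is correct. The paper gives no argument of its own for this statement; it imports it from Benedetti--Bergeron--Machacek and Bergeron--Pilaud. Your route is the standard generalized-permutahedron proof of this fact: the braid fan refines the normal fan of $\Delta_\HH$, the chamber of $\pi$ selects $v_\pi=\sum_H\e_{\mathcal{O}_\pi(H)}$, linear extensions give surjectivity, and braid walls give edges.

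Two spots should be written more carefully.

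\emph{Injectivity.} Drop your first attempt (``choose $c$ generic inside a common region\dots''), which is not an argument, and state the ``cleaner route'' precisely. A point $\sum_H p_H$ maximizes $c$ on $\Delta_\HH$ exactly when each $p_H$ maximizes $c$ on $\Delta_H$. Hence the normal cone of $v_A$ is $\{c : c_{A(H)}\ge c_h \text{ for all } H\in\HH,\ h\in H\}$. Since $v_\pi$ is the unique maximizer on the open chamber $C_\pi$, we get $v_\pi=v_A$ iff $C_\pi$ lies in that cone iff $\mathcal{O}_\pi=A$. So the fibers of $\pi\mapsto v_\pi$ are exactly the fibers of $\mathcal{O}$, and your vague closing sentence about the cones ``coinciding'' becomes unnecessary.

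\emph{Step 3.} You assert without justification that adjacent chambers $\pi,\pi s_i$ land in the same or in adjacent maximal cones. Justify it by computing the face maximized by a point $c$ in the relative interior of their common wall, where $c_{\pi(i)}=c_{\pi(i+1)}$ and all other coordinates are distinct. That face is $\sum_H F_H$, where $F_H$ is the segment $[\e_{\pi(i)},\e_{\pi(i+1)}]$ if $\pi(i),\pi(i+1)$ are the first two elements of $H$ in $\pi$, and a vertex otherwise. So it is either a point or an edge with endpoints $v_\pi$ and $v_{\pi s_i}$.
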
 

Define $\HH|_{D} := \{ H \cap D \ | \ H \in \HH \} \setminus \{\emptyset\}$ to be the \defn{restriction} of $\HH$ to $D$. We start by proving a useful structural result. We caution the reader that an \defn{interval} in a poset $P$, as mentioned in the statement below, is a subset $I \subset P$ with the property that if $a \le b \le c$ and $a, c \in I$, then $b \in I$. This notion of interval should not be confused with intervals of the form $[x, y] = \{x, x+1, \dots, y-1, y\}$, which coincide with the above definition only when the poset is a subset of the natural numbers equipped with their usual total order.

\begin{proposition}\label{intervalrestriction}
    Let $\HH$ a hypergraph on $[n]$ and $D = [x,y]$ where $1\leq x < y \leq n$. Then $P_{\HH|_D}$ is isomorphic to an interval in $P_\HH$. In particular, if $P_{\HH|_D}$ is not a lattice, then $P_\HH$ is not a lattice.
\end{proposition}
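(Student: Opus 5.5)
The plan is to embed $P_{\HH|_D}$ into $P_\HH$ by "freezing" the behaviour outside $D$, using the surjection $\mathcal{O}$ of \Cref{adjacent} and the componentwise description of the order in \Cref{sourcecharacterization}. Given a permutation $\sigma$ of $D=[x,y]$, consider the permutation of $[n]$
$$\widehat\sigma := (n, n-1, \dots, y+1, \sigma(1), \dots, \sigma(y-x+1), x-1, x-2, \dots, 1).$$
By \Cref{surj}, for $H \in \HH$ we get the following. If $H \cap [y+1,n] \neq \emptyset$, then $\mathcal{O}_{\widehat\sigma}(H)=\max H$. If $H \subseteq [1,y]$ meets $D$, then $\mathcal{O}_{\widehat\sigma}(H)=\mathcal{O}_\sigma(H\cap D)$. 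If $H \subseteq [1,x-1]$, then $\mathcal{O}_{\widehat\sigma}(H)=\max H$.

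Define $\varphi(A')$, for an acyclic orientation $A'=\mathcal{O}_\sigma$ of $\HH|_D$, to be $\mathcal{O}_{\widehat\sigma}$. This is well defined, because the formula above only depends on $A'$. By \Cref{adjacent}, $\varphi(A')$ is an acyclic orientation of $\HH$.

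Next I would show that $\varphi$ is an order embedding. By \Cref{sourcecharacterization}, $A'\le B'$ means $A'(K)\le B'(K)$ for all $K\in\HH|_D$. The coordinates of $\varphi(A')$ are either constant ($\max H$) or equal to $A'(H\cap D)$, so $\varphi$ is order preserving. For the converse, I need every $K\in \HH|_D$ to be visible in some coordinate. This holds when $K=H\cap D$ with $H\subseteq[1,y]$. If every $H$ with trace $K$ reaches above $y$, the coordinate is frozen, and I would need to argue that $A'(K)$ is nevertheless determined by the remaining coordinates. This is a delicate point. If it fails, I would instead replace $\widehat\sigma$ by one putting $[y+1,n]$ \emph{last} in increasing order and $[1,x-1]$ first, and check which of the two conventions makes all traces visible. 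One might have to treat $H\cap D$ uniformly by symmetric reasoning, giving $\min/\max$ on the two sides.

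Then I identify the image with the set
$$S=\{A \text{ acyclic} : A(H)=\max H \text{ if } \max H>y \text{ or } H\subseteq[1,x-1];\ A(H)\in D \text{ otherwise}\}.$$
Convexity of $S$ is easy from \Cref{sourcecharacterization}. Suppose $A\le B\le C$ with $A,C\in S$. Coordinates pinned at $\max H$ are squeezed, since $\max H = A(H) \le B(H)\le \max H$. In the remaining case, $A(H)\ge x$ forces $B(H)\ge x$, and $B(H)\le \max H\le y$.

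The main obstacle is $S\subseteq \operatorname{im}\varphi$. Take $A\in S$ and write $A=\mathcal{O}_\pi$ by \Cref{adjacent}. I would modify $\pi$ by moving the elements of $[y+1,n]$ to the front in decreasing order and the elements of $[1,x-1]$ to the end. I would argue that the defining conditions of $S$ guarantee that $\mathcal{O}$ is unchanged. If $\max H>y$, then $\max H$ already appears before the other elements of $H$ in $\pi$. If $H\subseteq[1,y]$ meets $D$, its first element in $\pi$ lies in $D$. If $H \subseteq [1,x-1]$, the decreasing order is forced by $A(H)=\max H$.

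The delicate step is the first of these moves. Pushing the large elements forward can change $\mathcal{O}$ on hyperedges whose first element in $\pi$ was a large element other than their maximum. Ruling this out uses $A(H)=\max H$, together with an acyclicity argument: a large element preceding $\max H$ in $\pi$ would itself be a non-maximal source of some hyperedge, contradicting membership in $S$. Once the permutation has the form $\widehat\sigma$, we get $A=\varphi(\mathcal{O}_\sigma)$. This gives the isomorphism onto the interval $S$.

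The final clause follows because an interval of a lattice is a lattice: joins and meets of elements of $S$ computed in $P_\HH$ lie between elements of $S$, hence in $S$.
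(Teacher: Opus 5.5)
\textbf{There is a genuine gap, and it sits exactly at the step you flagged as delicate.} Placing $[y+1,n]$ before $\sigma$ forces $\mathcal{O}_{\widehat\sigma}(H)=\max H$ for every $H$ with $\max H>y$. The value of $A'$ on the trace $H\cap D$ is therefore discarded, and $\varphi$ need not be injective. This is not a corner case. Take $\HH=\{125,345,1245,1345\}$ and $D=[1,4]$, the example right after this proposition. Every non-singleton hyperedge contains $5$, so your $\varphi$ is constant. Yet $P_{\HH|_D}$ is the non-lattice coming from a fixless hugging quadruple, which is exactly the situation in which the proposition is used in \Cref{prop:necessary}.

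Your fallback (put $[1,x-1]$ first and $[y+1,n]$ last) only moves the problem to hyperedges reaching below $x$. For $\HH=\{1234\}$ and $D=[2,3]$, both of your conventions give a constant map, although $P_{\HH|_D}$ has two elements. In fact, any convention that places some element $z\notin D$ before $\sigma$ fails for a hyperedge containing $z$ and all of $D$. Correspondingly, your set $S$ is determined by values on hyperedges contained in $[1,y]$, and in general it is not isomorphic to $P_{\HH|_D}$.

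\textbf{The missing idea, which is what the paper does, is that no element outside $D$ may precede $\sigma$.} Take $\pi=\sigma X$ with $X$ a fixed permutation of $[n]\setminus D$. Then $\mathcal{O}_{\sigma X}(H)=\mathcal{O}_\sigma(H\cap D)$ for every $H$ meeting $D$, and $\mathcal{O}_{\sigma X}(H)=\mathcal{O}_X(H)$ otherwise. So every trace is visible, and order reflection is immediate from \Cref{sourcecharacterization}.

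Convexity is as easy as in your argument. Coordinates with $H\cap D=\emptyset$ are pinned. For $H$ meeting $D$, any $\gamma$ lying between two image points has $\gamma(H)$ squeezed into $H\cap D$, because $D$ is an integer interval.

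Surjectivity then needs no permutation shuffling at all:
\begin{enumerate}
\item If $H\cap D=H'\cap D$, then $\gamma(H)=\gamma(H')$, since otherwise $(H,H')$ is a $2$-cycle.
\item Hence $C(H\cap D):=\gamma(H)$ is a well-defined orientation of $\HH|_D$.
\item $C$ is acyclic, because any cycle of $C$ lifts to a cycle of $\gamma$.
\item Finally $\gamma=\mathcal{O}_{\pi_C X}$ for any $\pi_C$ with $\mathcal{O}_{\pi_C}=C$.
\end{enumerate}
The image contains the images of the minimum and maximum of $P_{\HH|_D}$, so it is a closed interval $[m,M]$ of $P_\HH$. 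That is what the lattice conclusion actually needs, since a mere convex subset of a lattice need not be a lattice.
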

\begin{proof}
By \Cref{adjacent}, there exists a permutation $\pi_A$ on $D$ such that $\mathcal{O}_{\pi_A}=A$ for every acyclic orientation $A \in P_{\HH|_D}$. Let $X$ be a permutation on $[n]\setminus D$. Then $\mathcal{O}_{\pi_A X} \in P_\HH$. Moreover, for every hyperedge $H \in \HH$, the source $\mathcal{O}_{\pi_A X} (H)$ is in $D$ if $H \cap D \ne \emptyset$ and in $[n]\setminus D$ otherwise. 

We will show that $\mathcal{D}:= \{\mathcal{O}_{\pi_A X}\ | \ A \in P_{\HH|_D}\}$ is an interval in $P_\HH$. Consider $A, B \in P_{\HH|_D}$ such that $\mathcal{O}_{\pi_A X} \le \mathcal{O}_{\pi_B X}$ and $\gamma \in P_\HH$ such that $\mathcal{O}_{\pi_A X} \le \gamma \le \mathcal{O}_{\pi_B X}$. Consider a hyperedge $H \in \HH$. If $H \cap D \ne \emptyset$, then $\mathcal{O}_{\pi_A X} (H) \le \gamma (H) \le \mathcal{O}_{\pi_B X} (H)$. Since $D = [x, y]$, it must be that $\gamma(H) \in D$. If $H \cap D = \emptyset$, then $\mathcal{O}_{\pi_A X} (H) = \gamma (H) = \mathcal{O}_{\pi_B X} (H)$.  Thus $\gamma \in \mathcal{D}$, as desired.  
\end{proof}

The following example illustrates a subtlety that can occur when a hypergraph is restricted to a given interval $D=[x,y]$.
\begin{example}
    Consider two hypergraphs $\HH=\{125,345,1245,1345\}$ and $\HH'=\{234,1235,1345\}$ on $[5]$. 
    
    From \Cref{def:hug,def:hugquad}, one can check that there is no hugging quadruple in $\HH$. However, if we take $D:=[1,4]$, then the restriction $\HH|_D =\{12,34,124,134\}$, which is the first hypergraph in \Cref{ex:hugquad}, now admits a hugging quadruple.

    By \Cref{def:intersectionclosed}, $\HH'_{\text{reg}} = \{234\}$ is closed under intersection. However, if we take $D:=[1,4]$ again, then $(\HH'|_D)_{\text{reg}}$ is not closed under intersection. Indeed, $H_1:=234$ and $H_2:=123$ are both in $(\HH'|_D)_{\text{reg}}$ but $H_1 \cap H_2 =23$ is not a hyperedge in $(\HH'|_D)_{\text{reg}}$.
\end{example}

We are now ready to prove the forward direction of \Cref{thmcyclic}.

\begin{proposition}\label{prop:necessary}
     Let $\HH$ be a cyclic interval hypergraph on $[n]$. If the hypergraphic poset $P_\HH$ is a lattice, then ${(\HH|_D)}_{\reg}$ is closed under intersection and there exist a fix for every hugging quadruple of $\HH|_D$ for any interval $D = [x, y]$ where $1 \le x<y \le n$.
\end{proposition}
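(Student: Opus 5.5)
We argue by contraposition. Suppose that for some $D=[x,y]$ one of the two conditions fails for $\HH' := \HH|_D$. By \Cref{intervalrestriction}, $P_{\HH'}$ is isomorphic to an interval of $P_\HH$, so it suffices to show that $P_{\HH'}$ is not a lattice. After relabelling, we may assume $D=[1,m]$ and $\HH'$ is a cyclic interval hypergraph on $[1,m]$. In each case we will exhibit two acyclic orientations $A,B$ of $\HH'$ having two distinct minimal common upper bounds, or no common upper bound above a forced one. All comparisons use the coordinatewise criterion of \Cref{sourcecharacterization}, and acyclic orientations will be produced as $\mathcal{O}_\pi$ for suitable permutations $\pi$, via \Cref{adjacent}.

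\emph{Case 1: $\HH'_{\reg}$ is not closed under intersection.} Pick regular $H=[a,b]$ and $K=[c,d]$ with $a<c\le b<d$ whose intersection $[c,b]$ is not a hyperedge. Among such pairs we choose one minimizing $d-a$, so that the configuration is as local as possible. If needed, we restrict further to $[a,d]$ using \Cref{intervalrestriction}. This restriction makes $H$ and $K$ an initial and a final segment; the only cyclic hyperedges remaining are those containing both endpoints, and these are handled by the same argument. We then mimic the obstruction of Bergeron--Pilaud for interval hypergraphs. Take orientations $A$ and $B$ in which every hyperedge's source is as small as possible (from permutations starting with small labels), except that $A$ places the source of $H$ at $b$ and $B$ places the source of $K$ at $c$. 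Any common upper bound must push $H$'s source to at least $b$ and $K$'s to at least $c$. The two minimal completions differ in whether $c$ precedes $b$ or $b$ precedes $c$ in the underlying permutation. Because $[c,b]$ is not a hyperedge, no hyperedge forces a comparison between these two completions, so they are incomparable minimal upper bounds.

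\emph{Case 2: a hugging quadruple $I,\Ic,J,\Jc$ of $\HH'$ has no fix.} Let $U=I\cup\Ic\cup J\cup\Jc$; because the quadruple is hugging, $U$ contains $1,2,m-1,m$. We model the argument on the minimal example $\{12,124,34,134\}$ from \Cref{ex:hugquad}. Work directly there: take $A=\mathcal{O}_{\pi}$ and $B=\mathcal{O}_{\sigma}$ with $\pi$ starting $(2,1,\dots)$ and $\sigma$ starting $(m,m-1,\dots)$ (or a suitable variant). The candidate upper bounds are the orientations from permutations starting with $2$ followed by $m-1$, or with $m-1$ followed by $2$. The $\Ic$ and $\Jc$ sources are where cyclicity matters: $\Ic$ and $\Jc$ contain both $1$ and $m$, so their sources can jump. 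We then compute coordinatewise that the two candidates are incomparable and that every common upper bound dominates one of them. A fix $H\supseteq\{2,m-1\}$ inside $U$ is exactly the hyperedge that would distinguish or merge these candidates, by making $2$ versus $m-1$ decisive in a single coordinate. Its absence is therefore what breaks the lattice property. Hyperedges outside $U$, or not containing both $2$ and $m-1$, receive sources chosen consistently in both candidates, so they do not interfere.

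The main obstacle is Case 2. We must choose $A$ and $B$ precisely and verify that every other hyperedge of $\HH'$ behaves identically across the two candidate joins, which requires a careful case analysis on how an arbitrary hyperedge meets $\{1,2,m-1,m\}$. To keep this manageable, we would first reduce to a minimal failing $D$: choose $D$ with $y-x$ smallest, so that for every proper subinterval both conditions hold. This minimality should rule out extra hyperedges near $2$ and $m-1$ that could otherwise create an unexpected comparison between the candidates.
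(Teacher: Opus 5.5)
Both of your cases have genuine gaps: in each, the orientations you propose do not produce an obstruction, and you only assert that the other hyperedges do not interfere. In Case 1 your choice of $A$ and $B$ is backwards. If $A(H)=b=\max H$, every upper bound $M$ has $M(H)=b$. Since $b\in K$, acyclicity on the pair $H,K$ then forces $M(K)\ge b$. So there is a single local completion, and no choice between ``$c$ before $b$'' and ``$b$ before $c$''. For $\{123,234\}$ on $[4]$, with sources listed for $(123,234)$, your recipe gives $A=(3,3)$ and $B=(1,2)$, which are comparable. The obstruction appears only with the roles reversed: take $A$ from a permutation starting with $c$ and $B$ from one starting with $a,b$. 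In this example that gives $A=(2,2)$ and $B=(1,3)$, whose minimal upper bounds $(3,3)$ and $(2,4)$ are incomparable. Even then you must still control every other hyperedge, in particular the cyclic hyperedges of the restriction containing both $a$ and $d$, which you dismiss as ``handled by the same argument''. The paper does not reprove this part: it takes the intersection condition directly from \Cref{intervalrestriction} together with Bergeron--Pilaud's Prop.~4.6.

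In Case 2 you give no working construction, and the tentative one fails. Take $\{12,124,34,134\}$ with sources listed in the order $(I,\Ic,J,\Jc)$. Then $\mathcal{O}_{2134}=(2,2,3,1)$ and $\mathcal{O}_{4312}=(1,4,4,4)$ have coordinatewise maximum $(2,4,4,4)=\mathcal{O}_{4321}$, which is their join. Moreover, orientations from permutations beginning $2,3$ or $3,2$ send $\Ic$ to $2$, so they are not even above $\mathcal{O}_{4312}$.

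The missing idea is a bowtie of four orientations. Place a permutation $Y$ of $[n]\setminus(I\cup\Ic\cup J\cup\Jc)$ first, so that every hyperedge not contained in that union gets the same source in all four. Then take $\pi_A=Y(n-1)12nX$, $\pi_B=Y1n2(n-1)X$, $\pi_C=Yn1(n-1)2X$ and $\pi_D=Y2n(n-1)1X$. The absence of a fix is not used to separate two candidate joins, as you suggest. It is used to identify fibers of $\mathcal{O}$, e.g.\ $\mathcal{O}_{\pi_C}=\mathcal{O}_{Yn(n-1)12X}$ and $\mathcal{O}_{\pi_B}=\mathcal{O}_{Y12n(n-1)X}$. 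These identifications let weak-order comparisons give $A,B<C,D$. Any $M$ with $A,B\le M\le C,D$ must then satisfy $(M(I),M(\Ic),M(J),M(\Jc))\in\{1\}\times\{1,2\}\times\{n\}\times\{n-1,n\}$. Each of these four patterns contains a cycle among $I,\Ic,J,\Jc$, so $A\vee B$ cannot exist. Your proposed reduction to a minimal failing $D$ is not needed.
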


\begin{proof}
    Suppose $P_\HH$ is a lattice. The condition of ${(\HH|_D)}_{\reg}$ being closed under intersection follows from~\Cref{intervalrestriction} and~\cite[Prop. $4.6$]{BergeronPilaud2026}. The rest of the proof expands on the same techniques as in ~\cite[Prop. $4.6$]{BergeronPilaud2026}. 

    For the sake of a contradiction, suppose for an arbitrary $D=[x,y]$ such that $1 \leq x < y \leq n$, there is a hugging quadruple $\{I,\Ic,J,\Jc\}$ without a fix. This implies that $n\geq 4$. \Cref{intervalrestriction} allows us to assume without loss of generality that $D=[n]$. Consider the following permutations:

    $$\begin{matrix}
    \pi_A=Y(n-1)12nX, && \pi_B=Y1n2(n-1)X, \\ \pi_C=Yn1(n-1)2X, && \pi_D=Y2n(n-1)1X,
    \end{matrix}$$
    where $Y$ is any permutation on $[n]\setminus (I \cup \Ic \cup J \cup \Jc)$ and where $X$ is any permutation on $[n]\setminus (Y \cup \{1,2,n-1,n\})$. Their corresponding acyclic orientations are as follows:

    $$\begin{matrix}
    A:=\mathcal{O}_{\pi_A}, && B:=\mathcal{O}_{\pi_B}, && C:=\mathcal{O}_{\pi_C}, && D:=\mathcal{O}_{\pi_D}.
    \end{matrix}$$

    We check the first four positions of the source sequences of these orientations displayed in the order $I,\Ic,J,\Jc$. Since $I,\Ic,J,\Jc$ are not fixes, we have that $n\notin I$, $n-1\notin I \cup \Ic$,  $1\notin J$ and $2\notin J \cup \Jc$ and thus the following:

    $$\begin{matrix}
    S_A=(1,1,n-1,n-1,\dots) , && S_B=(1,1,n,1,\dots) ,\\S_C=(1,n,n,n,\dots) , && S_D=(2,2,n,n,\dots) ,
    \end{matrix}$$

    By \Cref{sourcecharacterization}, $A\not<B$ and $C\not<D$. By~\cite[Cor. 2.15]{BergeronPilaud2026}, $B<C$ according to the weak order on $4$ elements. Since the only hyperedges $H$ in $\HH$ that do not contain $\{n\}\cup Y$ but contain $1$ and $n-1$ would be a fix, we have that $\mathcal{O}_{\pi_C}=\mathcal{O}_{Yn(n-1)12X}$ and thus by \cite[Cor. 2.15]{BergeronPilaud2026} the weak order implies that $A<C$. Since the only hyperedges $H$ in $\HH$ that do not contain $Y$ but contain $2$ and $n-1$ would be a fix, we have that $\mathcal{O}_{Y(n-1)21nX}=\mathcal{O}_{Y2(n-1)1nX}$. By~\cite[Cor. 2.15]{BergeronPilaud2026}, the weak order implies that $A<\mathcal{O}_{Y(n-1)21nX}=\mathcal{O}_{Y2(n-1)1nX}<D$. Finally, the only hyperedges $H$ in $\HH$ that do not contain $\{1\}\cup Y$ but contain $2$ and $n$ would be a fix, so $\mathcal{O}_{\pi_B}=\mathcal{O}_{Y12n(n-1)X}$ and the weak order implies that $B<D$. See \Cref{permu}.

    \input{Drawings/permu}

    Since $P_\HH$ is a lattice, there exists an orientation $M$ such that $A, B \leq M$ and $M\leq C, D$. By \Cref{sourcecharacterization}, $S_M$ is one of the following source sequences:
    \[
    (1,1,n,n-1,\dots), \ (1,1,n,n,\dots), \ (1,2,n,n-1,\dots), \ (1,2,n,n,\dots).
    \]
    However, these four orientations are all cyclic, contradicting the existence of a join, which concludes the proof.
\end{proof}

\section{Backward direction of \Cref{thmcyclic}}\label{sec:backward}
The goal of this section is to prove the ``if" direction of \Cref{thmcyclic}. Given a cyclic interval hypergraph $\HH$ satisfying the hypothesis, we would like to show that there exists a join for every pair of acyclic orientations $A$ and $B$ in its source sequence poset $P_\HH$. In \Cref{subsec:pseudojoin}, we propose a \emph{pseudo-join} $X^{AB}$ for $A$ and $B$. In \Cref{subsec:acyclicity}, we prove that this pseudo-join is acyclic and therefore is an element of $P_\HH$. We first show in \Cref{prop:minimalcycle} that any minimal cycle in $X^{AB}$ would have length $2$. Then we analyze the case of $2$-cycles in \Cref{lem:twocycle-reg-cyc}, \Cref{prop:twocycle-reg-cyc}, and \Cref{prop:twocycle}, eventually arriving at a contradiction by finding a restriction of $\HH$ that contains a hugging quadruple without a fix. This section contains several technical lemmas to assist the proofs. Finally, we show that the pseudo-join is indeed the join in \Cref{join} in \Cref{subsec:join}.

Throughout this section, let $\HH$ be a cyclic interval hypergraph on $[n]$ and let $P_\HH$ be its source sequence poset.

\subsection{Pseudo-join}\label{subsec:pseudojoin}

For every hyperedge $H \in \HH$ and two distinct acyclic orientations $A$ and $B$ in $P_\HH$, let \defn{$H_{AB}$}$:=\max\{A(H), B(H)\}$. 

\begin{definition}
Let $H \in \HH$ and $\ell \in H$. Let $A$ and $B$ be two distinct acyclic orientations of $\HH$. We say that a squence $(H_i, h_i)_{i = 1}^k$ where $H_i \in \HH$ and $h_i \in H_i$ is an \defn{$(H, \ell)$-sequence with respect to $A$ and $B$} if the following conditions are satisfied:
\begin{itemize}
    \item $H_1 = H$
    \item $h_1 = \ell$ and $h_i = (H_i)_{AB}$ if $i \ne 1$
    \item $h_i \in H_{i+1}$ and $h_i < h_{i+1}$ for every $i \in [k-1]$
    \item $h_k \in H$
\end{itemize}
Moreover, let 
\[X^{AB}(H, \ell) = \max \{h_k\ | \ (H_i, h_i)_{i = 1}^k \text{ is an $(H, \ell)$-sequence with respect to $A$ and $B$}\}.
\]
Define 
\[
X^{AB}(H) = \min_{\ell \in H, \ \ell \ge H_{AB}} \{X(H, \ell)\}.
\]
We call $X^{AB}$ the \defn{pseudo-join} of $A$ and $B$. We omit the notation about $A$ and $B$ when they are clear from the context.
\end{definition}

\begin{example}\label{ex:pseudojoin}
\Cref{pseudojoin} illustrates the procedure to obtain the pseudo-join of two given orientations $A,B$ of a hypergraph $\HH$. Consider the cyclic interval hypergraph 

$$\HH=\{H_1=1236, H_2=234, H_3=1256, H_4=34, H_5= 56\}.$$

Consider the orientations $A$ and $B$ respectively given by the source sequences $S_A=(2,2,2,4,6)$ and $S_B=(1,3,5,3,5)$. We compute the $(H,\ell)$-sequences for each $H\in\HH$ and $\ell\geq H_{AB}$ as follows.
$$\begin{matrix*}[l]
    (H_1,2) : (H_1),(H_1,H_2),(H_1,H_3,H_5) && (H_1,3) : (H_1) &&  (H_1,6) : (H_1),(H_1,H_3,H_5)\\
    (H_2,3) : (H_2),(H_2,H_4) && (H_2,4) : (H_2) && && \\
    (H_3,5) : (H_3),(H_3,H_5) && (H_3,6) : (H_3)  &&  && \\
    (H_4,4) : (H_4)&&  &&  &&  \\
    (H_5,6) : (H_5). && && &&
\end{matrix*}$$

Next we compute $X^{AB}(H,\ell)$ for each $H\in \HH$ and each $\ell\geq H_{AB}$.
$$\begin{matrix}
    X^{AB}(H_1,2)=6 && X^{AB}(H_1,3)=3 && X^{AB}(H_1,6)=6 \\
    X^{AB}(H_2,3)=4 && X^{AB}(H_2,4)=4 \\
    X^{AB}(H_3,5)=6 && X^{AB}(H_3,6)=6 \\
    X^{AB}(H_4,4)=4  \\
    X^{AB}(H_5,6)=6.
\end{matrix}$$

We can now obtain the source of the pseudo-join $X^{AB}(H)$ for each $H\in \HH$.
$$\begin{matrix}
    X^{AB}(H_1) =\min\{3,6\}=3 & X^{AB}(H_2) =4 & X^{AB}(H_3) = 6 & X^{AB}(H_4) = 4 & X^{AB}(H_5) = 6.
\end{matrix}$$
Therefore, the pseudo-join $X$ is given by the source sequence $S_X=(3,4,6,4,6).$
\end{example}

\begin{figure}[H]
 \scalebox{0.8}{\begin{tikzpicture}[decoration={markings, mark=at position 0.5 with {\arrow{>}}}]
     
 \node at (-3.5,0) {\begin{tikzpicture}
 
\def\R{1.75}
\def\RR{1.5}

\draw[dashed,gray] (0,0) circle (\R);

\draw[line width=1.75pt, draw=Black!80] (0,0) ++(360:2.25) arc[start angle=360, end angle=480, radius=2.25];

\draw[line width=1.75pt, draw=Black!40] (0,0) ++(420:2.5) arc[start angle=420, end angle=480, radius=2.5];

\draw[line width=1.75pt, draw=Black] (0,0) ++(240:2) arc[start angle=240, end angle=420, radius=2];

\draw[line width=1.75pt, draw=Black!60] (0,0) ++(180:2.5) arc[start angle=180, end angle=360, radius=2.5];

\draw[line width=1.75pt, draw=Black!20] (0,0) ++(180:2.25) arc[start angle=180, end angle=240, radius=2.25];


\node at ({\RR*cos(300)},{\RR*sin(300)}) {$1$};
\node at ({\RR*cos(360)},{\RR*sin(360)}) {$2$};
\node at ({\RR*cos(60)},{\RR*sin(60)}) {$3$};
\node at ({\RR*cos(120)},{\RR*sin(120)}) {$4$};
\node at ({\RR*cos(180)},{\RR*sin(180)}) {$5$};
\node at ({\RR*cos(240)},{\RR*sin(240)}) {$6$};

\node at ({2*cos(300)},{2*sin(300)}) {\Large{$\textcolor{blue!30}{\bullet}$}};
\node at ({2*cos(360)},{2*sin(360)}) {\Large{$\textcolor{red!70}{\bullet}$}};
\node at ({2.25*cos(360)},{2.25*sin(360)}) {\Large{$\textcolor{red!70}{\bullet}$}};
\node at ({2.5*cos(360)},{2.5*sin(360)}) {\Large{$\textcolor{red!70}{\bullet}$}};
\node at ({2.5*cos(420)},{2.5*sin(420)}) {\Large{$\textcolor{blue!30}{\bullet}$}};
\node at ({2.25*cos(420)},{2.25*sin(420)}) {\Large{$\textcolor{blue!30}{\bullet}$}};
\node at ({2.5*cos(480)},{2.5*sin(480)}) {\Large{$\textcolor{red!70}{\bullet}$}};
\node at ({2.25*cos(180)},{2.25*sin(180)}) {\Large{$\textcolor{blue!30}{\bullet}$}};
\node at ({2.5*cos(180)},{2.5*sin(180)}) {\Large{$\textcolor{blue!30}{\bullet}$}};
\node at ({2.25*cos(240)},{2.25*sin(240)}) {\Large{$\textcolor{red!70}{\bullet}$}};

\end{tikzpicture}};


 \node at (3.5,0) {\begin{tikzpicture}
 
\def\R{1.75}
\def\RR{1.5}

\draw[dashed,gray] (0,0) circle (\R);

\draw[line width=1.75pt, draw=Black!80] (0,0) ++(360:2.25) arc[start angle=360, end angle=480, radius=2.25];

\draw[line width=1.75pt, draw=Black!40] (0,0) ++(420:2.5) arc[start angle=420, end angle=480, radius=2.5];

\draw[line width=1.75pt, draw=Black] (0,0) ++(240:2) arc[start angle=240, end angle=420, radius=2];

\draw[line width=1.75pt, draw=Black!60] (0,0) ++(180:2.5) arc[start angle=180, end angle=360, radius=2.5];

\draw[line width=1.75pt, draw=Black!20] (0,0) ++(180:2.25) arc[start angle=180, end angle=240, radius=2.25];


\node at ({\RR*cos(300)},{\RR*sin(300)}) {$1$};
\node at ({\RR*cos(360)},{\RR*sin(360)}) {$2$};
\node at ({\RR*cos(60)},{\RR*sin(60)}) {$3$};
\node at ({\RR*cos(120)},{\RR*sin(120)}) {$4$};
\node at ({\RR*cos(180)},{\RR*sin(180)}) {$5$};
\node at ({\RR*cos(240)},{\RR*sin(240)}) {$6$};

\node at ({2.5*cos(120)},{2.5*sin(120)}) {\Large{$\textcolor{orange!70}{\bullet}$}};
\node at ({2.25*cos(120)},{2.25*sin(120)}) {\Large{$\textcolor{orange!70}{\bullet}$}};
\node at ({2*cos(60)},{2*sin(60)}) {\Large{$\textcolor{orange!70}{\bullet}$}};
\node at ({2.25*cos(240)},{2.25*sin(240)}) {\Large{$\textcolor{orange!70}{\bullet}$}};
\node at ({2.5*cos(240)},{2.5*sin(240)}) {\Large{{$\textcolor{orange!70}{\bullet}$}}};

\end{tikzpicture}};
\end{tikzpicture}}
\caption{The cyclic interval hypergraph $\HH=\{\textcolor{Black}{1236}, \textcolor{Black!80}{234},\textcolor{Black!60}{1256},\textcolor{Black!40}{34},\textcolor{Black!20}{56}\}$. On the left we can see the juxtaposition of the orientations $A$ with sources represented with $\textcolor{blue!30}{\bullet}$ and $B$ with sources represented with $\textcolor{red!70}{\bullet}$ respectively given by the source sequence $S_A=(1,3,5,3,5)$ and $S_B=(2,2,2,4,6)$. On the right we can see the orientation given by the pseudo-join $X$ of $A$ and $B$ given by the source sequence $S_X=(3,4,6,4,6)$. The sources of $X$ are represented with $\textcolor{orange!70}{\bullet}$. Note that all orientations here are acyclic.}
\label{pseudojoin}\end{figure}

\subsection{Structural results}

Given two sequences $(a_i)_{i=1}^k$ and $(b_i)_{i=1}^r$, let $(a_i)_{i=1}^k \| (b_i)_{i=1}^r$ denote the sequence obtained by concatenating $(a_i)_{i=1}^k$ by $(b_i)_{i=1}^r$, i.e., $(a_1, \dots, a_k, b_1, \dots, b_r)$.

\begin{lemma}\label{lem:Hreg}
    Let $A$ and $B$ be two distinct acyclic orientations of $\HH$. If $H \in \Hr$, then: \begin{enumerate}
        \item For every $h_j,h_{j+1}$ that appears in an $(H, \ell)$-sequence, $[h_j, h_{j+1}]\subseteq H_{j+1}$. 
        \item $X(H) = X(H, H_{AB})$.
    \end{enumerate}  
    
\end{lemma}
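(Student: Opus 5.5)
The plan is to prove part (1) first and then use it as a ``splicing'' device for part (2).

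For part (1), write $H=[a,b]$ and let $(H_i,h_i)_{i=1}^k$ be an $(H,\ell)$-sequence. The sequence $(h_i)$ is strictly increasing, with $h_1=\ell\ge a$ and $h_k\in H$, so $h_k\le b$. Hence every $h_i$ lies in $[a,b]=H$, and in particular $[h_j,h_{j+1}]\subseteq H$ for every $j$.

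Now fix $j$ and recall that $h_j,h_{j+1}\in H_{j+1}$ with $h_j<h_{j+1}$.
\begin{itemize}
\item If $H_{j+1}$ is regular, it is an interval, so it contains $[h_j,h_{j+1}]$ by convexity.
\item If $H_{j+1}$ is cyclic, the only way the claim can fail is that $h_j\in H_{j+1}^-$, $h_{j+1}\in H_{j+1}^+$, and the gap of $H_{j+1}$ lies strictly inside $[h_j,h_{j+1}]\subseteq H$.
\end{itemize}
To rule out the cyclic failure I would use that $h_{j+1}=(H_{j+1})_{AB}$ is the source of $H_{j+1}$ in $A$ or in $B$, say in $A$. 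I would then try to build a cycle in $A$ from $H_{j+1}$ and $H$: the source $A(H)$ lies in $H$, and since $H_{j+1}$ covers both ends of $[h_j,h_{j+1}]$ while $H$ covers the gap, I expect to be able to find an element of $H_{j+1}\setminus\{A(H_{j+1})\}$ or $H\setminus\{A(H)\}$ that closes a $2$-cycle. This contradicts the acyclicity of $A$. If that does not work directly, I would instead exploit the earlier links $h_{j-1}<h_j$ of the sequence and their own sources.

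This cyclic case is the step I expect to be the main obstacle. The rest of the lemma is mechanical once (1) is in hand.

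For part (2), I would show that $X(H,H_{AB})\le X(H,\ell)$ for every $\ell\in H$ with $\ell\ge H_{AB}$; since $\ell=H_{AB}$ is itself admissible, the minimum is then attained there. Take an $(H,H_{AB})$-sequence $(H_i,h_i)_{i=1}^k$ whose endpoint realizes $h_k=X(H,H_{AB})$. There are two cases.
\begin{itemize}
\item If $h_k\le \ell$, the trivial sequence $((H,\ell))$ gives $X(H,\ell)\ge \ell\ge h_k$.
\item Otherwise, since $h_1=H_{AB}\le\ell<h_k$, there is a unique index $j$ with $h_j\le \ell<h_{j+1}$. By part (1), $\ell\in[h_j,h_{j+1}]\subseteq H_{j+1}$. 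Hence $(H,\ell)\,\|\,(H_i,h_i)_{i=j+1}^k$ satisfies every condition of an $(H,\ell)$-sequence: its first term is $(H,\ell)$, each later term has $h_i=(H_i)_{AB}$, we have $\ell\in H_{j+1}$ with $\ell<h_{j+1}$, and $h_k\in H$. It follows that $X(H,\ell)\ge h_k$.
\end{itemize}
In both cases $X(H,\ell)\ge X(H,H_{AB})$, which proves (2).
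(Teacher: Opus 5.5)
Your part (2) is correct and is essentially the paper's splicing argument; you splice at $\ell$ itself rather than at $X(H,\ell)$ and argue directly rather than by contradiction, which is slightly cleaner. The genuine gap is in part (1), in exactly the cyclic case you flag as the main obstacle. You never actually produce the cycle. The heuristic you offer (that $H$ ``covers the gap'' of $H_{j+1}$) is not what makes the argument work. Also, a $2$-cycle $(H,H_{j+1})$ in $A$ needs \emph{both} $A(H_{j+1})\in H\setminus\{A(H)\}$ \emph{and} $A(H)\in H_{j+1}\setminus\{A(H_{j+1})\}$, not one or the other. Since part (2) relies on part (1), the lemma as a whole is not yet established.

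The missing observation is where $A(H)$ and $B(H)$ sit relative to $H_{j+1}$. The sequence starts at $h_1=\ell\ge H_{AB}$ and increases, so $A(H),B(H)\le H_{AB}\le h_1\le h_j$. In the failing case $h_j\in H_{j+1}^-$, and $H_{j+1}^-=[1,i]$ is an initial segment, so it contains every element $\le h_j$; hence $A(H),B(H)\in H_{j+1}^-$. Now suppose, say, $A(H_{j+1})=h_{j+1}$, which lies in $H_{j+1}^+$. Then $A(H)\in H_{j+1}\setminus\{A(H_{j+1})\}$. Moreover, $A(H_{j+1})=h_{j+1}\in[h_j,h_{j+1}]\subseteq H$ and $h_{j+1}>h_j\ge A(H)$, so $A(H_{j+1})\in H\setminus\{A(H)\}$. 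This is a $2$-cycle in $A$; the case $h_{j+1}=B(H_{j+1})$ gives one in $B$ symmetrically. That is precisely the contradiction the paper uses, and no appeal to earlier links $h_{j-1}<h_j$ is needed.
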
 
\begin{proof} \mbox{} \begin{enumerate}
    \item

    Suppose for the sake of contradiction that $[h_j, h_{j+1}]\nsubseteq H_{j+1}$. Then $H_{j+1} \in \Hc$ and $h_{j+1}\in H_{j+1}^+$. Since $H_{AB} \le h_j < h_{j+1}$, this implies that $A(H), B(H) \in H_{j+1}^- \subset H_{j+1}$. Meanwhile, since $H \in \Hr$, we have that $[H_{AB},h_{j+1}]\subset H$. Hence, either $A$ or $B$ is cyclic, which is a contradiction.

    \item

    Suppose for the sake of contradiction that there exists an element $\ell\in H$ such that $H_{AB}<l\leq X(H,\ell)<X(H,H_{AB})$.  Let $(H_i, h_i)_{i = 1}^k$ be a sequence that realizes $X(H,H_{AB})$. By construction of $X$ and by $(1)$, there must be $h_j,h_{j+1}$ such that $X(H,\ell)\in [h_j,h_{j+1})$ which implies that the $(H, \ell)$-sequence realizing $X(H,\ell)$ can be concatenated with the sequence $(H_i, h_i)_{i = j+1}^k$ which contradicts $X(H,\ell)<X(H,H_{AB})$.
\end{enumerate}
\end{proof}

The proof of \Cref{lem:Hreg} captures two core arguments used throughout \Cref{sec:backward}. The first part discusses whether the interval $[h_j, h_{j+1}]$ is contained in $H_{j+1}$, a hyperedge in an $(H, \ell)$-sequence. This would not be a question if $H_{j+1} \in \Hr$. However, since we are dealing with potentially cyclic hyperedges, we need to take more into consideration. By the argument for \Cref{lem:Hreg}(1), whenever $h_{j+1} \in H$, we must have $[h_j, h_{j+1}] \subseteq H_{j+1}$ by the acyclicity of $A$ and $B$, regardless of $H \in \Hr$ or $H \in \Hc$. Even without $h_{j+1} \in H$, the same argument can be used when our hypothesis forbids that a certain $x < h_j$ be contained in $H_{j+1}$. 

The second part utilizes the definition of the pseudo-join. Let $(H'_i, h'_i)_{i=1}^s$ be a sequence realizing $X(H', \ell') < X(H, \ell) \in H'$ ($H$ and $H'$ are not necessarily distinct), then $h'_j \notin H_t$ for any $j \in [s]$ and $t \in [k]$ such that $h'_j < h_t = (H_t)_{AB}$. Otherwise, $(H'_i, h'_i)_{i=1}^j\| (H_i, h_i)_{i=t}^k$ is an $(H', \ell')$-sequence.

\begin{lemma}{\label{lem:wrap2}} 
Let $A$ and $B$ be two distinct acyclic orientations of $\HH$ and let $H\in \Hc$. If $X(H)\neq X(H,H_{AB})$ then there exists $\Bar{H}\in \Hc$ such that:
    \begin{enumerate}
        \item $[1,\ell-1]\subset \Bar{H}$, where $\ell$ is the smallest index such that $X(H)=X(H,\ell)$;
        \item $X(\Bar{H})=X(H,H_{AB})\in H^+$. 
    \end{enumerate}
\end{lemma}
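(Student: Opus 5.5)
Throughout, write $m := X(H,H_{AB})$, let $\ell$ be the smallest index with $X(H) = X(H,\ell)$, and put $p := X(H,\ell)$. Since $X(H) \neq m$ and $X(H)$ is a minimum over $\ell' \ge H_{AB}$, we have $\ell > H_{AB}$ and $p < m$. Also $p \ge \ell$, because the one-term sequence $(H,\ell)$ is admissible.

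The plan is to take a sequence $(H_i,h_i)_{i=1}^k$ realizing $m$ and locate the step where it ``jumps over'' $p$. This uses the concatenation argument of \Cref{lem:Hreg}(2). Let $t$ be the first index with $h_t > p$; it exists since $h_k = m > p$, and $t \ge 2$ since $h_1 = H_{AB} < p$. Then $h_{t-1} \le p < h_t$.

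\textbf{Step 1: the jump happens through a cyclic hyperedge.}
\begin{itemize}
\item If $p \in H_t$, append $(H_i,h_i)_{i \ge t}$ to a sequence realizing $X(H,\ell)$. This is valid because its last entry is $p < h_t$ and $p \in H_t$, so it gives an $(H,\ell)$-sequence ending at $m > p$, a contradiction. The same works if $h_{t-1} = p$. Hence $p \notin H_t$ and $h_{t-1} < p < h_t$, with both $h_{t-1}, h_t \in H_t$.
\item Since $H_t$ is a cyclic interval containing $h_{t-1}$ and $h_t$ but not the intermediate value $p$, it is cyclic. Moreover $h_{t-1} \in H_t^-$ and $h_t \in H_t^+$, so $[1,h_{t-1}] \subseteq H_t$.
\end{itemize}
This $H_t$ is the candidate for $\Bar H$, possibly after an extremal re-choice in Step 3.

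\textbf{Step 2: show $[1,\ell-1] \subseteq \Bar H$.} It suffices to get $h_{t-1} \ge \ell-1$, and this should come from minimality of $\ell$. Suppose $h_{t-1} < \ell - 1$. I would look at the elements $\ell'$ of $H$ with $H_{AB} \le \ell' < \ell$; in particular, since $H$ is cyclic, either $H_{AB}$ and $\ell$ lie in the same block of $H$, or $H$ wraps between them.

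In the same-block case, take $\ell' \in (h_{t-1},\ell)$, or a suitable earlier $h_j \in H$. Then $X(H,\ell') \le p$ by a reverse concatenation: any $(H,\ell')$-sequence that reaches past $p$ must cross through a hyperedge of the realizing sequence of $p$, or through one avoiding $p$. This would give $X(H) = X(H,\ell')$ with $\ell' < \ell$, contradicting minimality. The wrapping case would be handled using the acyclicity of $A$ and $B$, as in \Cref{lem:Hreg}(1): the sources $A(H), B(H)$ sit in $H^-$, which forces the elements below $\ell$ to be covered.

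\textbf{Step 3: show $X(\Bar H) = m \in H^+$.}
\begin{itemize}
\item \emph{Position of $m$.} We have $m \in H \cap H_t^+$ since $m \ge h_t$. If instead $m \in H^-$, then $[1,m] \subseteq H$ would contain $h_t$. With $h_{t-1}, h_t \in H$, the whole realizing sequence would live inside an interval of $H$, and the argument of \Cref{lem:Hreg}(2) would rule out $p < m$. So $m \in H^+$.
\item \emph{Lower bound.} The tail $(H_i,h_i)_{i \ge t}$ shows $X(\Bar H, (\Bar H)_{AB}) \ge m$, because $(\Bar H)_{AB} = h_t$ and $m \in H_t$.
\item \emph{Upper bound, which is the main obstacle.} We need $X(\Bar H) \le m$, and equality, since $X(\Bar H)$ is a minimum over all $\ell'' \ge h_t$. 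I would first try an extremal choice: among all realizing sequences and all crossing indices, choose $\Bar H$ so that $X(\Bar H)$ is as large as possible, or so that $t$ is maximal. Then any sequence giving $X(\Bar H,\ell'') > m$, or $X(\Bar H) < m$, would either concatenate into an $(H,H_{AB})$-sequence exceeding $m$, contradicting maximality, or produce a later wrapping hyperedge contradicting the extremal choice.
\item \emph{Fallback.} If this is not enough, I would induct on $m$ (or on $n - h_t$), applying the lemma itself to $\Bar H$ whenever $X(\Bar H) \ne X(\Bar H,(\Bar H)_{AB})$, and chasing the new wrapping hyperedge.
\end{itemize}
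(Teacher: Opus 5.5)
Your Step 1 is correct: it is a clean form of the concatenation argument. Step 2, however, tries to prove something false, so there is a genuine gap. Minimality of $\ell$ does not force $h_{t-1}\ge \ell-1$, and the wrapping hyperedge of a sequence realizing $X(H,H_{AB})$ need not contain $[1,\ell-1]$.

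Here is a counterexample. Take $n=10$ and the hyperedges $H=[9,10]\cup[1,4]$, $K=[6,10]\cup\{1\}$, $G=[7,10]\cup[1,2]$, $M=[6,9]$. In the order $(H,K,G,M)$, use the source sequences $S_A=(1,6,7,6)$ and $S_B=(1,1,1,9)$; both orientations are acyclic.
\begin{itemize}
\item The $AB$-values are $H_{AB}=1$, $K_{AB}=6$, $G_{AB}=7$, $M_{AB}=9$.
\item One computes $X(H,1)=X(H,2)=9$, $X(H,3)=3$, $X(H,4)=4$, $X(H,9)=9$, $X(H,10)=10$. So $X(H)=3$, $\ell=3$ and $m=9$.
\item The two sequences realizing $m$ are $(H,1),(K,6),(M,9)$ and $(H,1),(G,7),(M,9)$. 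Both cross $p=3$ at $t=2$ with $h_{t-1}=1<\ell-1$.
\item The first sequence gives $\Bar{H}=K$, which does not contain $2$.
\end{itemize}
Your reverse concatenation produces no contradiction here, because $X(H,2)=9>p$ is reached through $G$, which simply avoids $p$. The extremal re-choices you suggest also do not separate $K$ from $G$: both have $t=2$, and $X(K)=X(G)=9$.

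The missing idea is that minimality of $\ell$ only controls $X(H,\ell-1)$. So $\Bar{H}$ must be extracted from a sequence realizing $X(H,\ell-1)$; the paper takes its second hyperedge. For this to make sense, first note the following about the crossing hyperedge $H_t$ of your Step 1. Its negative part contains $A(H),B(H)$, so by acyclicity $h_t\notin H$. Hence $X(H)\in H^-$, and $H_{AB}\le \ell-1<\ell\le X(H)$ all lie in $H^-$. Since $X(H,\ell-1)>X(H)$, your Step 1 argument applied to that sequence gives a cyclic $\Bar{H}$ with $\ell-1\in \Bar{H}^-$, hence $[1,\ell-1]\subseteq \Bar{H}$.

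You then still need $X(H,\ell-1)=X(H,H_{AB})$.
\begin{itemize}
\item Rerouting $(H,H_{AB})$ through $\Bar{H}$ gives $X(H,\ell-1)\le X(H,H_{AB})$.
\item Conversely, a jump of an $(H,H_{AB})$-sequence over $X(H,\ell-1)$ would have to use a cyclic hyperedge whose negative part already contains $X(H)$. The crossing of $X(H)$ itself cannot jump that far, by acyclicity. Such a hyperedge would extend the $(H,\ell)$-sequence, a contradiction.
\end{itemize}

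Finally, your Step 3 has the difficulty reversed. The bound $X(\Bar{H})\le X(\Bar{H},m)=m$ is immediate, since $[m,n]\subseteq H^+$ and any continuation would extend the realizing sequence. The substantive part is $X(\Bar{H},\ell'')\ge m$ for \emph{every} admissible $\ell''$, not only $\ell''=\Bar{H}_{AB}$. This follows by concatenating into the tail: a tail hyperedge skipping $\ell''$ would be cyclic with $X(H)$ in its negative part, again extending the $(H,\ell)$-sequence.
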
 

\begin{proof}
    Consider the sequence $(H_i,h_i)_{i=1}^k$ that realizes $X(H,\ell-1)$. Let $\Bar{H} = H_2$, so $\ell - 1 \in \bar{H}$. Since $X(H)\notin \Bar{H}$, otherwise we can extend the sequence realizing $X(H, \ell)$ to $(H_i,h_i)_{i=1}^k$, we have that $[1,\ell-1]\subset \Bar{H}$. The acyclicity of $A$ and $B$ implies $\Bar{H}_{AB}\notin H$, so $\Bar{H}_{AB}\leq \min(H^+)$. Therefore, $H^+\subset \Bar{H}^+$ and $X(\Bar{H})=X(H,\ell-1)\in H^+$ and the result follows by concatenation. 
\end{proof}

\begin{example}[\Cref{ex:pseudojoin} continued]
    \Cref{pseudojoin} illustrates the two parts of \Cref{lem:wrap2}. Recall that the example examines the cyclic interval hypergraph 

$$\HH=\{H_1=1236, H_2=234, H_3=1256, H_4=34, H_5= 56\}.$$

Consider the orientations $A$ and $B$ given by the source sequences $S_A=(2,2,2,4,6)$ and $S_B=(1,3,5,3,5)$, and the cyclic hyperedge $H_1\in \Hc$. Observe from \Cref{ex:pseudojoin} that $X(H_1)=X(H_1,3)$ and that $X(H_1)\neq X(H_1,(H_1)_{AB})=X(H_1,2)$. Following the notation from \Cref{lem:wrap2}, we have that $\ell = 3$ and $\Bar{H}=H_3$. Indeed, $[1,\ell-1=2]\subset H_3$ and $X(H_3)=6=X(H_1,2)$.
\end{example}

\subsection{Acyclicity}\label{subsec:acyclicity}

\begin{proposition}\label{prop:minimalcycle}
    Let $A,B$ be two acyclic orientation of $\HH$. If the pseudo-join $X^{AB}$ is cyclic then all minimal cycle $S=(H_1,\dots,H_s)$ are of length $2$.
\end{proposition}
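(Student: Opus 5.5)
Assume $S=(H_1,\dots,H_s)$ is a minimal cycle of $X:=X^{AB}$ with $s\ge 3$, so that $X(H_{i+1})\in H_i\setminus\{X(H_i)\}$ for all $i$, indices taken mod $s$. The plan is to derive a contradiction in two steps. First I will show that consecutive sources must increase, $X(H_i)<X(H_{i+1})$, except possibly at one wrap-around step. Then I will use the interval or cyclic-interval shape of the $H_i$, together with minimality, to produce a chord. A chord means some $X(H_j)\in H_i$ with $j\neq i+1$, and this yields a strictly shorter cycle.

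The basic tool is the concatenation principle stated after \Cref{lem:Hreg}. Write $x_i:=X(H_i)$ and fix $\ell_i\ge (H_i)_{AB}$ with $x_i=X(H_i,\ell_i)$. Every $(H_{i+1},\ell)$-sequence ends at an element of $H_{i+1}$ that is at least $(H_{i+1})_{AB}$. We also have $x_i\ge \ell_i\ge (H_i)_{AB}$.

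The first claim is that if $x_{i+1}\in H_i$ and $x_{i+1}>\ell_i$, then $x_{i+1}\le x_i$. I will prove it by appending a sequence realizing $x_{i+1}$ to the trivial sequence $(H_i,\ell_i)$. This produces an $(H_i,\ell_i)$-sequence ending in $H_i$ at a value at least $x_{i+1}$, so $x_i=X(H_i,\ell_i)\ge x_{i+1}$. The appending step requires the link condition $\ell_i\in H_{i+1}$ with $\ell_i<(H_{i+1})_{AB}$. That is not automatic, so I will treat separately the case $(H_{i+1})_{AB}\le \ell_i$. In that case I compare $X(H_{i+1})$ with $X(H_{i+1},\ell)$ for $\ell\ge(H_{i+1})_{AB}$, using the minimum in the definition of $X(H_{i+1})$.

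Combining these cases, each step of the cycle either goes down ($x_{i+1}<x_i$) or uses a cyclic hyperedge in an essential way, as in \Cref{lem:wrap2}. For regular hyperedges, \Cref{lem:Hreg}(2) gives $x_i=X(H_i,(H_i)_{AB})$. Part (1) of that lemma says the realizing sequences sweep whole intervals $[h_j,h_{j+1}]$.

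Next I will use that the $x_i$ are arranged around the cycle. Consider the largest source on the cycle, say $x_m$, together with its predecessor $H_{m-1}$, which contains $x_m$. The hyperedge $H_{m-1}$ also contains $x_{m-1}$. If $H_{m-1}$ is regular, it contains the whole interval between $x_{m-1}$ and $x_m$. If it is cyclic, it contains the arc through $n$ and $1$, and \Cref{lem:wrap2} controls where its source can be.

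I then follow the cycle from $H_m$ back to $H_{m-1}$ and pick the first index at which the sequence of sources re-enters the interval or arc spanned by $H_{m-1}$. When $s\ge3$, this source lies in $H_{m-1}$ but is not $x_m$, and it yields a shorter cycle. This contradicts minimality unless $s=2$.

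The main obstacle is the cyclic hyperedges. For them, $X(H)$ may differ from $X(H,H_{AB})$, so the monotonicity argument above breaks down. I expect to handle this with \Cref{lem:wrap2}: replace a cyclic $H_i$ with $X(H_i)\neq X(H_i,(H_i)_{AB})$ by the hyperedge $\bar H_i$ from that lemma. This hyperedge contains $[1,\ell-1]$ and has its source in $H_i^+$, and I will use it to build the chord in the wrap-around case.

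I also anticipate using the acyclicity of $A$ and $B$ repeatedly, in the style of \Cref{lem:Hreg}(1), to rule out a hyperedge of the cycle containing both $(H)_{AB}$-type elements and a larger element wrapped around the other side. That is where most of the case analysis will go.
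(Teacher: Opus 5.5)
Your second step cannot work, because the chord it looks for does not exist. In any orientation of a cyclic interval hypergraph, a minimal cycle with $s\ge 3$ has distinct sources and no chords. Each $H_i$ therefore contains $X(H_i)$ and $X(H_{i+1})$ but no other source of $S$. Consecutive sources are then neighbours in the cyclic order of $\{X(H_1),\dots,X(H_s)\}$, so $S$ winds exactly once around the circle. This is the paper's reduction to a counterclockwise case ($X(H_1)<\dots<X(H_s)$) and a clockwise case ($X(H_1)>\dots>X(H_s)$).

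For instance, $H_1=[1,3]$, $H_2=[3,5]$, $H_3=\{5,6,1\}$ with sources $1,3,5$ form a minimal $3$-cycle. Here the first source after $x_m=5$ that re-enters $H_{m-1}=[3,5]$ is $x_{m-1}=3$ itself. So no argument based only on the shape of the hyperedges can shorten $S$. Bringing in the $\bar H$ of \Cref{lem:wrap2} does not help either: $\bar H\notin S$, and nothing in your plan places its source in the right members of $S$. The contradiction must come from how $X^{AB}$ is built from $A$ and $B$, argued separately for each winding direction.

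Your first step does not provide this, and it also contradicts your opening sentence, which says sources increase. Concatenation excludes an increasing step $X(H_i)<X(H_{i+1})$ only when two conditions hold:
\begin{enumerate}
\item $\ell_i\in H_{i+1}$ and $\ell_i<(H_{i+1})_{AB}$;
\item the appended tail can start at $(H_{i+1})_{AB}$ rather than at $\ell_{i+1}$, which fails for a cyclic $H_{i+1}$ with $X(H_{i+1})\ne X(H_{i+1},(H_{i+1})_{AB})$.
\end{enumerate}
When $\ell_i\notin H_{i+1}$, concatenation gives nothing. Take $\{[1,3],[3,5],[5,6]\}$ with $A$ and $B$ both having sources $1$ and $3$ on the first two hyperedges and differing on $[5,6]$. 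Then $X^{AB}$ has sources $1$ and $3$ on $[1,3]$ and $[3,5]$, an increasing step through regular hyperedges only. So your dichotomy (``goes down or uses a cyclic hyperedge essentially'') cannot be obtained from concatenation.

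The counterclockwise case needs an idea your proposal lacks. Since $A(H_i)\le X(H_i)$, if $A(H_i)$ and $X(H_i)$ always lay on the same side ($H_i^-$ or $H_i^+$), then $A$ would inherit the cycle. Hence some $H_r$ has $A(H_r)\in H_r^-$ and $X(H_r)\in H_r^+$, and likewise some $H_k$ has $B(H_k)\in H_k^-$ and $X(H_k)\in H_k^+$. Analysing the sequence realizing $X(H_r,(H_r)_{AB})$ then produces an explicit cycle in $A$.

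Only the clockwise case is settled by concatenation together with \Cref{lem:wrap2}. There, not every $X(H_i)$ equals $X(H_i,(H_i)_{AB})$, and the $\bar H$ attached to $H_{s-1}$ forces $X(H_s)>X(H_1)$.
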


\begin{proof}

    Suppose that the pseudo-join $X^{AB}$ is cyclic according to \Cref{defn:orientation} with a minimal cycle $S=(H_1,\dots,H_s)$ of length $s>2$. This implies that $X(H_{i-1})\notin X(H_i)$. Minimality implies that $X(H_{1})\notin H_{s-1}$ and $X(H_{i+1})\notin H_{i-1}$ for $1< i < s$. We can either assume that $X(H_i)<X(H_{i+1})$ or that $X(H_i)>X(H_{i+1})$ for $1\leq i < s$, where the cycles are respectively counterclockwise and clockwise. See \Cref{clock} for an illustration.

    \begin{figure}[H]
 \scalebox{0.95}{\begin{tikzpicture}
 
 \node at (-3.5,0) {\begin{tikzpicture}[scale=0.4, every node/.style={font=\small}]
\def\R{3}
\def\RR{2.5}
\def\RRR{3.5}
\node (v1) at ({\RR*cos(0)},{\RR*sin(0)}) {1};
\node (v2) at ({\RR*cos(45)},{\RR*sin(45)}) {2};
\node (v3) at ({\RR*cos(90)},{\RR*sin(90)}) {3};
\node  at ({\RR*cos(135)},{\RR*sin(135)}) {4};
\node  at ({\RR*cos(180)},{\RR*sin(180)}) {5};
\node  at ({\RR*cos(225)},{\RR*sin(225)}) {6};
\node  at ({\RR*cos(270)},{\RR*sin(270)}) {7};
\node  at ({\RR*cos(315)},{\RR*sin(315)}) {8};
\draw[dashed,gray] (0,0) circle (\R);
\draw[line width=1.75pt, color=blue!30] (0,0) ++(-15:4.50) arc[start angle=-15, end angle=105, radius=4.50];
\draw[line width=1.75pt,color=red!70] (0,0) ++(75:4.0) arc[start angle=75, end angle=195, radius=4.0];
\draw[line width=1.75pt,color=violet!70] (0,0) ++(165:4.50) arc[start angle=165, end angle=285, radius=4.50];
\draw[line width=1.75pt,color=orange!70] (0,0) ++(255:4.00) arc[start angle=255, end angle=375, radius=4.00];

\node at ({4*cos(270)},{4*sin(270)}) {\Large{$\bullet$}};
\node at ({4.5*cos(0)},{4.5*sin(0)}) {\Large{$\bullet$}};
\node at ({4*cos(90)},{4*sin(90)}) {\Large{$\bullet$}};
\node at ({4.5*cos(180)},{4.5*sin(180)}) {\Large{$\bullet$}};

\node at ({5.5*cos(45)},{5.5*sin(45)}) {$H_1$};
\node at ({5.5*cos(135)},{5.5*sin(135)}) {$H_2$};
\node at ({5.5*cos(225)},{5.5*sin(225)}) {$H_3$};
\node at ({5.5*cos(315)},{5.5*sin(315)}) {$H_4$};

\node at (0,0) {$S$};

\draw [->,line width=1pt] ++(140:10mm) arc (-220:40:10mm) --++(110:2mm);
\end{tikzpicture}};

\node at (3.5,0) {\begin{tikzpicture}[scale=0.4, every node/.style={font=\small}]
\def\R{3}
\def\RR{2.5}
\def\RRR{3.5}
\node (v1) at ({\RR*cos(0)},{\RR*sin(0)}) {1};
\node (v2) at ({\RR*cos(45)},{\RR*sin(45)}) {2};
\node (v3) at ({\RR*cos(90)},{\RR*sin(90)}) {3};
\node  at ({\RR*cos(135)},{\RR*sin(135)}) {4};
\node  at ({\RR*cos(180)},{\RR*sin(180)}) {5};
\node  at ({\RR*cos(225)},{\RR*sin(225)}) {6};
\node  at ({\RR*cos(270)},{\RR*sin(270)}) {7};
\node  at ({\RR*cos(315)},{\RR*sin(315)}) {8};
\draw[dashed,gray] (0,0) circle (\R);
\draw[line width=1.75pt,color=blue!30] (0,0) ++(-15:4.50) arc[start angle=-15, end angle=105, radius=4.50];
\draw[line width=1.75pt,color=red!70] (0,0) ++(75:4.0) arc[start angle=75, end angle=195, radius=4.0];
\draw[line width=1.75pt,color=violet!70] (0,0) ++(165:4.50) arc[start angle=165, end angle=285, radius=4.50];
\draw[line width=1.75pt,color=orange!70] (0,0) ++(255:4.00) arc[start angle=255, end angle=375, radius=4.00];

\node at ({4.5*cos(270)},{4.5*sin(270)}) {\Large{$\bullet$}};
\node at ({4*cos(0)},{4*sin(0)}) {\Large{$\bullet$}};
\node at ({4.5*cos(90)},{4.5*sin(90)}) {\Large{$\bullet$}};
\node at ({4*cos(180)},{4*sin(180)}) {\Large{$\bullet$}};

\node at ({5.5*cos(45)},{5.5*sin(45)}) {$H_3$};
\node at ({5.5*cos(135)},{5.5*sin(135)}) {$H_2$};
\node at ({5.5*cos(225)},{5.5*sin(225)}) {$H_1$};
\node at ({5.5*cos(315)},{5.5*sin(315)}) {$H_4$};

\node at (0,0) {$S$};
\draw [<-,line width=1pt] ++(138:10mm) --++(60:-1pt) arc (-220:40:10mm) ;
\end{tikzpicture}};

\end{tikzpicture}}
\caption{The hypergraph $\HH=\{\textcolor{blue!30}{123},\textcolor{red!70}{345},\textcolor{violet!70}{567},\textcolor{orange!70}{178}\}$. On the left, an counterclockwise minimal cycle $S=(178,123,345,567)$ given by the source sequence $(1,3,5,7)$. On the right, a clockwise minimal cycle $S=(567,345,123,178)$ given by the source sequence $(3,5,7,1)$. }
\label{clock}\end{figure}

    \emph{Case 1} (Counterclockwise). Suppose that for all $H_i\in S$ we have that $A(H_i)$ (resp. $B(H_i)$) and $X(H_i)$ are both in $H_i^+$ or $ H_i^-$. Since $X(H_{i+1})\in H_i$ and $X(H_1)\in H_s$, this implies that $A$ is cyclic (resp. $B$). Since $A(H_i)\leq X(H_i)$ (resp. $B(H_i)$), there exists a hyperedge $H_r\in S$ such that $A(H_r)\in H_r^-$ and $X(H_r)\in H_r^+$ and a hyperedge $H_k\in S$ such that $B(H_k)\in H_k^-$ and $X(H_k)\in H_k^+$.
    
    Suppose that $k\neq r$. Without loss of generality, assume that $k<r$. This implies that $X(H_k)<X(H_r)$. Since both $H_k$ and $H_r$ are cyclic, we can assume that $r=s$. By the minimality of $S$, we have that $H_k^- \subseteq H_r^-$ and $H_r^+ \subseteq H_k^+$. Therefore, $r = s$. Otherwise, $X(H_{r+1}) \notin H_k$ would force $X(H_{r+1})\in H_r^-$ a contradiction. Then $X(H) < X(H_r)$ for $H \in \{H_1, \dots, H_{k-1}\}$. The minimality of $S$ also implies that $X(H_r) \notin H$. Therefore, $X(H) \in H^-$ and thus, $A(H),B(H)\in H^-$ as well.

    By acyclicity of $B$ and the containments $H_k^- \subseteq H_r^-$ and $H_r^+ \subseteq H_k^+$, we have $B(H_r) \in H_r^-$ and $B(H_k) \le (H_r)_{AB}$. Consider the sequence $(\Bar{H}_i,\Bar{h}_i)_{i=1}^v$ that realizes $X(H_r,(H_r)_{AB})$.

    Since $X(H_r)\in H_k$, arguing by concatenation, no hyperedge $\Bar{H}_i$ contains $X(H_k)$. This implies the existence of a hyperedge $\Bar{H}_j$ such that $\Bar{H}_j\in \Hc$ and such that $\Bar{h}_j\in \Bar{H}_j^+$ and such that $A(H_r),B(H_r)\in \Bar{H}_j$. By acyclicity of $A$ and $B$, we have that $X(H_k)<\Bar{h}_j<\min(H_r^+)$ and that $\Bar{h}_j\neq B(\Bar{H}_j)$. Therefore, $\min(H_k^+) \leq A(H_k) \leq X(H_k)$. Since $X(H)\in H^-$ for all hyperedges $H\in \{H_1,\dots,H_{k-1}\}$, there exists a cycle in $A$ given by
        
    $$A(H_r) \xrightarrow{H_r} A(H_1) \xrightarrow{H_1} A(H_2) \xrightarrow{H_2} \cdots \xrightarrow{H_{k-1}} A(H_k) \xrightarrow{H_k} A(\Bar{H}_j) \xrightarrow{\Bar{H}_j} A(H_r).$$

    Suppose $k = r$. ThaPt is, $A(H_k), B(H_k) \in H_k^-$. If $k \ne s$, then $X(H_{k+1}) \in H_{k+1}^+$ and $A(H_{k+1}) \in H_{k+1}^-$ by the acyclicity of $A$, so we can return to the previous case. If $k = s$ and no other edge satisfies the same condition on $A$ or $B$, we repeat the argument above with $\Bar{H}_{j+1}$ such that $X(H_{k-1}) \in [\Bar{h}_j,\Bar{h}_{j+1})$. Assuming without loss of generality that $\Bar{h}_{j+1} = A(\Bar{H}_{j+1})$, we again find a cycle in $A$:
        \[
        A(H_k) \xrightarrow{H_k} A(H_1) \xrightarrow{H_1} A(H_2) \xrightarrow{H_2} \cdots \xrightarrow{H_{k-2}} A(H_{k-1}) \xrightarrow{H_{k-1}} A(\Bar{H}_{j+1}) \xrightarrow{\Bar{H}_{j+1}} A(H_k).
        \]

    \emph{Case 2} (Clockwise). Suppose that for the sake of contradiction that $X(H)=X(H,H_{AB})$ for all $H\in S$. This contradicts the definition of the pseudo-join since we would have that $X(H_1)=X(H_s)$. Hence, assume that $X(H_i)\neq X(H_i,(H_i)_{AB})$ for all $H_i\in S$. The minimality of the cycle $S$ implies that $i\in \{s-1,s\}$. Let $H_i$ be the first such hyperedge in the cycle. In the case where $i=s$, we can use the same above arguments to prove the the existence of another hyperedge $H_j\in S$ such that $X(H_j)\neq X(H_j,(H_j)_{AB})$. In the case where $i=s-1$, \Cref{lem:Hreg} implies that $H_{s-1}\in \Hc$. Let $\ell$ be the minimal index such that $X(H_{s-1}, \ell) = X(H_{s-1})$. By \Cref{lem:wrap2}, there exists a hyperedge $H'\in \Hc$ such that $[1,l-1]\subset H'$ and $X(H')=X(H_{s-1},(H_{s-1})_{AB})\in H_{s-1}^+$. The minimality of $S$ implies that $X(H_1)< X(H')$. Since $(H_s)_{AB}\leq X(H_s)<l\leq X(H_{s-1})$, it follows that $(H_s)_{AB},X(H_s)\in H'$ and thus $X(H_s)>X(H_1)$, which is a contradiction.

    We can now conclude the proof since assuming that the pseudo-join $X^{AB}$ is cyclic with a minimal cycle of length greater than two leads to a contradiction.
\end{proof}

\begin{lemma}\label{lem:twocycle-reg-cyc}
    Let $A$ and $B$ be two distincts acyclic orientations of $\HH$. If $X^{AB}$ is cyclic with a $2$-cycle given by some $H \in \Hc$ and $H' \in \Hr$, then $X(H) > X(H')$ and $X(H), X(H') \in H^-$.
\end{lemma}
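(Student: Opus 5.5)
We will use three facts about the pseudo-join.

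First, $X(K)\ge K_{AB}$ for every hyperedge $K$. This holds because every $(K,\ell)$-sequence starts with $h_1=\ell\ge K_{AB}$ and increases.

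Second, for the regular hyperedge $H'$, \Cref{lem:Hreg}(2) gives $X(H')=X(H',H'_{AB})$. Moreover, \Cref{lem:Hreg}(1) shows that a realizing sequence sweeps an interval $[H'_{AB},X(H')]$. Each consecutive step is contained in the next hyperedge of the sequence.

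Third, we use the concatenation principle recorded after \Cref{lem:Hreg}. If $X(K)\in K'$ and some step $h_t$ of a sequence realizing $X(K,\cdot)$ satisfies $h_t\in K'$ with $h_t>K'_{AB}$, then the sequence can be grafted onto a $(K',\cdot)$-sequence. That forces $X(K')$ to be at least as large as the end of the grafted sequence.

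\textbf{Ruling out $X(H)<X(H')$.} A $2$-cycle means $X(H')\in H\setminus\{X(H)\}$ and $X(H)\in H'\setminus\{X(H')\}$. Suppose $X(H)<X(H')$. Since $H'$ is an interval, $[X(H),X(H')]\subseteq H'$.

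Take a sequence $(H_i,h_i)_{i=1}^k$ realizing $X(H)=X(H,\ell)$ for the minimal admissible $\ell\ge H_{AB}$. Its last element satisfies $h_k=X(H)\in H'$.

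If $h_k>H'_{AB}$, we prepend to it a sequence realizing $X(H')$, truncated at the first index where the step exceeds $h_k$. This yields a contradiction with maximality, in the same way as in the proof of \Cref{lem:Hreg}(2).

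More directly, we append to the sequence realizing $X(H)$ the element $(H',X(H'))$, or the realizing sequence of $H'$ from the appropriate point. Its final value $X(H')$ lies in $H$ and exceeds $X(H)$. This would give $X(H,\ell)\ge X(H')>X(H)$, a contradiction.

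The delicate point is the admissibility condition $h_k<(H')_{AB}$ versus $h_k\ge (H')_{AB}$. When $X(H)<H'_{AB}$, we use that $H'_{AB}\le X(H')\in H$. Together with $[X(H),H'_{AB}]\subseteq H'$, acyclicity of $A$ and $B$ then gives $H'_{AB}\in H$ and the needed comparisons. Hence $X(H)>X(H')$.

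\textbf{Locating both values in $H^-$.} Now $X(H')<X(H)$ with both values in $H'\cap H$. Since $H'$ is an interval, $[X(H'),X(H)]\subseteq H'$.

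Suppose $X(H)\in H^+$. We split into two cases according to where $X(H')$ lies.

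\emph{Case $X(H')\in H^+$.} Then $H_{AB}\le X(H)$. We get an admissible starting point $\ell=\max(H_{AB},X(H'))$ or nearby. Appending $(H',\cdot)$ shows that $X(H,\ell)$ can be made to equal $X(H')$ or less. This contradicts the definition of $X(H)$ as a minimum, provided we check that the sequence $(H,X(H'))$ alone, of length $1$, is an $(H,X(H'))$-sequence. That requires $X(H')\ge H_{AB}$, and this must be argued from acyclicity of $A$ and $B$ using $H'$.

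\emph{Case $X(H')\in H^-$.} Then $H'\supseteq [X(H'),X(H)]$ contains the whole gap between $H^-$ and $H^+$. This contradicts $H'\neq[1,n]$ only partially, so instead we use \Cref{lem:wrap2} when $X(H)\ne X(H,H_{AB})$, and a direct cycle in $A$ or $B$ otherwise.

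\textbf{Conclusion.} Once $X(H)\in H^-$ is established, $X(H')<X(H)$ forces $X(H')\in H^-$, since $H^-$ is an initial segment of $[n]$.

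The main obstacle is the sub-case where $X(H)\in H^+$ and $X(H)\neq X(H,H_{AB})$. There we plan to invoke \Cref{lem:wrap2} to produce $\bar H\in\Hc$ with $[1,\ell-1]\subseteq\bar H$, and then derive a cycle in $A$ or $B$ through $H,H',\bar H$.
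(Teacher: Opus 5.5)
\textbf{First half.} Your argument ruling out $X(H)<X(H')$ is essentially the paper's Case 1, but the ``delicate point'' you raise does not arise. If $X(H)<H'_{AB}$, you simply append the whole realizing sequence $(H'_i,h'_i)_{i=1}^s$ of $X(H')$. It ends at $X(H')\in H$, so no claim that $H'_{AB}\in H$ is needed. Note also that appending the single entry $(H',X(H'))$ is not legal, since every entry after the first must be $(H_i,(H_i)_{AB})$. If instead $H'_{AB}\le X(H)$, \Cref{lem:Hreg}(1) gives $j$ with $X(H)\in[h'_j,h'_{j+1})\subseteq H'_{j+1}$, and you append the tail from $j+1$.

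\textbf{Second half: there is a genuine gap.} In your subcase $X(H')\in H^+$ the inequality runs the wrong way. $X(H,\ell)$ is a \emph{maximum} over sequences, so the length-one sequence $(H,X(H'))$ and any appended tails only give the lower bound $X(H,X(H'))\ge X(H')$. Nothing there shows $X(H)=\min_\ell X(H,\ell)\le X(H')$. In your subcase $X(H')\in H^-$, no cycle is actually exhibited, and \Cref{lem:wrap2} is not the relevant tool.

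The missing idea is to study $X(H,X(H'))$ and the hyperedge its realizing sequence uses to jump over $X(H)$. The argument runs as follows.
\begin{enumerate}
\item $H_{AB}\le X(H')$. Otherwise $(H'_i,h'_i)_{i=1}^s\|(H,H_{AB})$ is an $(H',H'_{AB})$-sequence ending past $X(H')$; this uses $H_{AB}\in[X(H'),X(H)]\subseteq H'$.
\item Hence $X(H,X(H'))$ is defined, and it is $\ge X(H)$ by minimality.
\item In fact $X(H,X(H'))>X(H)$. Otherwise, take its realizing sequence $(\bar H_i,\bar h_i)_{i=1}^r$, drop the first entry, and append the rest to $(H'_i,h'_i)_{i=1}^s$. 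This gives an $(H',H'_{AB})$-sequence ending at $X(H)>X(H')$, a contradiction.
\item Take $j$ with $\bar h_j\le X(H)<\bar h_{j+1}$. Maximality of $X(H,\ell)=X(H)$ forbids $X(H)\in\bar H_{j+1}$, since otherwise you could append $(\bar H_i,\bar h_i)_{i=j+1}^r$ to the realizing sequence of $X(H)$. So $\bar H_{j+1}\in\Hc$, with $\bar h_j\in\bar H_{j+1}^-$ and $\bar h_{j+1}\in\bar H_{j+1}^+$.
\item Then $A(H),B(H)\le H_{AB}\le X(H')\le\bar h_j$, so both lie in $\bar H_{j+1}^-$.
\item If $X(H)\in H^+$, then $(\bar H_{j+1})_{AB}=\bar h_{j+1}>X(H)$ also lies in $H^+$. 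This gives a $2$-cycle on $H,\bar H_{j+1}$ in whichever of $A,B$ attains $\bar h_{j+1}$.
\end{enumerate}
This single argument covers both of your subcases at once, whichever half of $H$ contains $X(H')$. It forces $X(H)\in H^-$, and $X(H')\in H^-$ then follows as you noted.
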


\begin{proof}
    Suppose $X(H) = X(H, \ell)$ for some $\ell \ge H_{AB}$ and $(H_i, h_i)_{i = 1}^k$ is a sequence realizing it. Suppose also that $(H'_i, h'_i)_{i = 1}^s$ is a sequence realizing $X(H') = X(H', H'_{AB})$ (the equality is given by \Cref{lem:Hreg}). Note that $X$ having a $2$-cycle at $H$ and $H'$ implies that $X(H), X(H') \in H \cap H'$. 

    \emph{Case 1.} Suppose $X(H) < X(H')$. If $H'_{AB} > X(H)$, then $(H_i, h_i)_{i = 1}^k \|(H'_i, h'_i)_{i = 1}^s$ is an $(H, \ell)$-sequence, a contradiction. Therefore, $H'_{AB} \le X(H)$. However, this implies $X(H) \in [h'_j, h'_{j+1}] \subseteq H'_{j+1}$ for some $j \in [s-1]$ by Lemma \ref{lem:Hreg}(1). Thus $(H_i, h_i)_{i = 1}^k \|(H'_i, h'_i)_{i = j+1}^s$ is an $(H, \ell)$-sequence, again a contradiction.
    
    \emph{Case 2.} Suppose $X(H) > X(H')$. If $H_{AB} > X(H')$, then $(H'_i, h'_i)_{i = 1}^s \|(H, H_{AB})$ is an $(H', H'_{AB})$-sequence, a contradiction. Hence $H_{AB} \le X(H')$. 
    
    Consider $X(H, X(H')) \ge X(H)$. Let $(\Bar{H}_i, \Bar{h}_i)_{i = 1}^{r}$ be a sequence that realizes $X(H, X(H'))$. In fact, $X(H, X(H')) > X(H)$. Otherwise, $(H'_i, h'_i)_{i = 1}^s \| (\Bar{H}_i, \Bar{h}_i)_{i = 2}^r$ is an $(H', H'_{AB})$-sequence, a contradiction. Therefore, $X(H') < X(H) < X(H, X(H'))$. By argument of concatenation, there is no $j\in [r-1]$ such that $X(H)\in [\Bar{h}_j,\Bar{h}_{j+1})$. This implies that $\Bar{H}_{j+1}\in\Hc$. Suppose $X(H) \in H^+$ and $X(H') \in H^-$. Then $\Bar{h}_{j+1} = (\Bar{H}_{j+1})_{AB} \in H^+$ because $\Bar{h}_{j+1} > X(H)$. Moreover, $A(H), B(H) \le H_{AB} \le X(H') \le \Bar{h}_j$, so $A(H), B(H) \in (\Bar{H}_{j+1})^-$. Thus at least one of $A$ and $B$ is cyclic, a contradiction. The same argument also holds for the case when $X(H), X(H') \in H^+$. Therefore, we can conclude that $X(H), X(H') \in H^-$. 
\end{proof}

\begin{lemma}\label{lem:existencehj}
    Let $A$ and $B$ be two distinct acyclic orientations of $\HH$. Let $H \in \Hc$ and $H' \in \HH$. Let $(\tilde{H}_i, \tilde{h}_i)_{i = 1}^r$ be a sequence realizing $X(H, \ell') > X(H) = X(H, \ell)$ for some $H_{AB} \le \ell' < X(H)$. Moreover, let $L \in \HH$ be such that $\ell \in L$ and $L_{AB} > \ell$. If $X(H, \ell') \in H^+$, then $L_{AB} \notin \tilde{H}_i$ for any $i \in [2, r]$ such that $X(H) < \tilde{h}_i$. 
\end{lemma}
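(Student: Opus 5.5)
We argue by contradiction, using the concatenation principle that follows \Cref{lem:Hreg}. Suppose some $i \in [2,r]$ satisfies $X(H) < \tilde{h}_i$ and $L_{AB} \in \tilde{H}_i$. The goal is to build an $(H,\ell)$-sequence whose last value lies in $H$ and exceeds $X(H) = X(H,\ell)$, which contradicts the maximality in the definition of $X(H,\ell)$.

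First we compare $L_{AB}$ with $\tilde{h}_i$. There are two cases.

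\emph{Case $L_{AB} < \tilde{h}_i = (\tilde{H}_i)_{AB}$.} Since $\ell \in L$ and $\ell < L_{AB}$, the sequence $(H,\ell),(L,L_{AB})$ is a valid start. Because $L_{AB} \in \tilde{H}_i$, we may continue it with $(\tilde{H}_j,\tilde{h}_j)_{j=i}^r$, and the last value $\tilde{h}_r = X(H,\ell') \in H$ is allowed as an endpoint. This gives an $(H,\ell)$-sequence ending at $X(H,\ell') > X(H)$, a contradiction.

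\emph{Case $L_{AB} \ge \tilde{h}_i$.} We aim to show this case cannot occur, or that it reduces to the first case at a later index. The plan is as follows.
\begin{itemize}
\item Let $i' \ge i$ be the largest index with $\tilde{h}_{i'} \le L_{AB}$.
\item If $i' < r$, then $L_{AB} < \tilde{h}_{i'+1}$. We would like $L_{AB} \in \tilde{H}_{i'+1}$, which we get from the interval-containment argument of \Cref{lem:Hreg}(1): $[\tilde{h}_{i'},\tilde{h}_{i'+1}] \subseteq \tilde{H}_{i'+1}$ whenever wrapping around is impossible. We then splice as in the first case.
\item If $i' = r$, then $L_{AB} \ge X(H,\ell')$, which lies in $H^+$. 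In this situation $L$ contains $\ell < X(H) < X(H,\ell') \le L_{AB}$.
\end{itemize}

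\textbf{Main obstacle: excluding wrap-around.} The hard step is ruling out the possibility that $\tilde{H}_{i'+1}$ (or $L$) is a cyclic hyperedge skipping the relevant interval. For this we use the hypothesis $X(H,\ell') \in H^+$ together with the acyclicity of $A$ and $B$, exactly as in the proofs of \Cref{lem:Hreg}(1) and \Cref{lem:wrap2}.

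\begin{itemize}
\item Suppose $\tilde{H}_{i'+1} \in \Hc$ and its value $\tilde{h}_{i'+1}$ lies in $\tilde{H}_{i'+1}^+$, while $L_{AB}$ is not in it. Then $L_{AB}$ sits in the gap between $\tilde{H}_{i'+1}^-$ and $\tilde{H}_{i'+1}^+$.
\item Moreover, $\tilde{h}_{i'} \in \tilde{H}_{i'+1}^-$, and $A(H), B(H) \le H_{AB} \le \ell' \le \tilde{h}_{i'}$.
\item Since $H^+$ contains $X(H,\ell')$, we have $H^+ \subseteq \tilde{H}_{i'+1}^+$ or the reverse. Comparing $A(H), B(H)$ and the sources on $\tilde{H}_{i'+1}$ then produces a $2$-cycle in $A$ or in $B$, a contradiction.
\end{itemize}

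For the subcase $i' = r$, we treat $L$ similarly. If $L$ is regular, then $[\ell, L_{AB}] \subseteq L$, so $X(H) \in L$. We then combine this with $\ell$ being the minimal index achieving $X(H)$, as in \Cref{lem:wrap2}, to obtain a contradiction. If $L$ is cyclic, the same acyclicity comparison with $H$ applies.

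I expect most of the work to lie in carefully enumerating these cyclic/regular positions of $\tilde{H}_{i'+1}$ and $L$ relative to $H^{\pm}$. The concatenation step itself is routine.
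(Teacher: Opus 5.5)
There is a genuine gap. Your case $L_{AB}<\tilde h_i$ and the non-wrapping part of the case $L_{AB}\ge \tilde h_i$ are correct splices. The step you single out as the main obstacle does not work, though. Acyclicity of $A$ and $B$ cannot exclude a cyclic $\tilde H_{i'+1}$ with $\tilde h_{i'}\in\tilde H_{i'+1}^-$, $\tilde h_{i'+1}\in\tilde H_{i'+1}^+$ and $L_{AB}$ in the gap. The $H$--$\tilde H_{i'+1}$ comparison only yields $\tilde h_{i'+1}\notin H$, not a $2$-cycle.

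Concretely, take $n=10$ with hyperedges $H=[9,10]\cup[1,3]$, $\tilde H_2=[1,6]$, $\tilde H_3=[7,10]\cup[1,5]$, $\tilde H_4=[7,9]$, $L=[2,6]$. Take source sequences $S_A=(1,5,5,9,5)$ and $S_B=(1,1,7,7,6)$ in this order. These are $\mathcal{O}_\pi$ for $\pi$ starting $5,1,9$, resp.\ $7,1,6$, hence acyclic. Then:
\begin{itemize}
\item $(H,1),(\tilde H_2,5),(\tilde H_3,7),(\tilde H_4,9)$ is an $(H,1)$-sequence ending in $H^+$;
\item $L_{AB}=6\in\tilde H_2$ and $\tilde h_2=5\le 6<7=\tilde h_3$;
\item yet $6\notin\tilde H_3$, and neither $A$ nor $B$ has a cycle.
\end{itemize}

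What rules this configuration out is the maximality defining $X(H,\ell)$, not acyclicity. $\tilde H_{i'+1}^-$ is an initial segment containing $\tilde h_{i'}\ge\tilde h_i>X(H)\ge\ell$, so it contains $\ell$ (and $X(H)$). Hence $(H,\ell)\|(\tilde H_j,\tilde h_j)_{j=i'+1}^r$ is an $(H,\ell)$-sequence ending at $X(H,\ell')>X(H)$, a contradiction. You must splice \emph{around} $L$ here, not through it; that is the missing idea.

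The subcase $i'=r$ has a similar problem. You invoke minimality of $\ell$, which is not a hypothesis of this lemma (only $X(H)=X(H,\ell)$ is assumed). You also appeal to an unspecified acyclicity comparison when $L$ is cyclic. Neither is needed, and this is exactly where the hypothesis $X(H,\ell')\in H^+$ belongs; you spent it on the wrap-around case, where it does not help. Since $H^+$ is a final segment of $[n]$ and $L_{AB}\ge\tilde h_r=X(H,\ell')\in H^+$, you get $L_{AB}\in H$. Then $(H,\ell),(L,L_{AB})$ is already an $(H,\ell)$-sequence ending at $L_{AB}>X(H)$, whatever the type of $L$.

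With these two repairs the proof closes:
\begin{itemize}
\item splice through $L$ when $L_{AB}\in\tilde H_{i'+1}$;
\item splice around $L$ when $\tilde H_{i'+1}$ wraps past $L_{AB}$;
\item stop at $(L,L_{AB})$ when $L_{AB}\ge\tilde h_r$.
\end{itemize}
Every case is then a concatenation contradiction against the maximality of $X(H,\ell)$, which is how the paper's four cases go. Acyclicity of $A$ and $B$ is never used.
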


\begin{proof}
Suppose for contradiction that $L_{AB} \in \tilde{H}_j$ for some $j \in [2, r]$ such that $X(H) < \tilde{h}_j$. We check the following cases and refer the reader to \Cref{lemma} for an instance of each of the cases:
    
    \emph{Case 1}. If $L_{AB} < \tilde{h}_j$, then $(H, \ell) \| (L, L_{AB}) \| (\tilde{H}_i, \tilde{h}_i)_{i = j}^r$ is an $(H, \ell)$-sequence, a contradiction.

    \emph{Case 2}. If $L_{AB} = \tilde{h}_j \ne \tilde{h}_r$, then $(H, \ell) \| (L, L_{AB}) \| (\tilde{H}_i, \tilde{h}_i)_{i = j+1}^r$ is an $(H, \ell)$-sequence, a contradiction. Since $\tilde{h}_j \ne \tilde{h}_r$, the last sequence in the concatenation is nonempty.

    \emph{Case 3}. If $L_{AB} > \tilde{h}_j \ne \tilde{h}_r$, then $L_{AB} \in [\tilde{h}_p, \tilde{h}_{p+1})$ for some $p \in [j, r-1]$. If $L_{AB} \in \tilde{H}_{p+1}$, then $(H, \ell) \| (L, L_{AB}) \| (\tilde{H}_i, \tilde{h}_i)_{i = p+1}^r$ is an $(H, \ell)$-sequence. 
If $L_{AB} \notin \tilde{H}_{p+1}$, then $X(H) < \tilde{h}_j \le \tilde{h}_p$ is in $(\tilde{H}_{p+1})_-$. Then $(H_i, h_i)_{i=1}^k\|(\tilde{H}_i, \tilde{h}_i)_{i = p+1}^r$ is an $(H, \ell)$-sequence. Both sequences contradict the construction of $X(H)$.

    \emph{Case 4}. If $L_{AB} \ge \tilde{h}_j = \tilde{h}_r$, then $L_{AB} \in H_+$ and $(H, \ell) \| (L, L_{AB})$ is an $(H, \ell)$-sequence, a contradiction.
    \end{proof}

\begin{figure}[H]
 \scalebox{0.95}{\begin{tikzpicture}
 
 \node at (-5.7,0) {\begin{tikzpicture}[scale=0.35, every node/.style={font=\small}]
\def\R{3}
\def\RR{2.5}
\def\RRR{3.5}
\node (v1) at ({\RR*cos(60)},{\RR*sin(60)}) {3};
\node (v2) at ({\RR*cos(360)},{\RR*sin(360)}) {2};
\node (v3) at ({\RR*cos(120)},{\RR*sin(120)}) {4};
\node  at ({\RR*cos(180)},{\RR*sin(180)}) {5};
\node  at ({\RR*cos(240)},{\RR*sin(240)}) {6};
\node  at ({\RR*cos(300)},{\RR*sin(300)}) {1};
\draw[dashed,gray] (0,0) circle (\R);
\draw[line width=1.5pt, color=Black] (0,0) ++(240:3.5) arc[start angle=240, end angle=420, radius=3.5];
\draw[line width=1.5pt,color=Black!75] (0,0) ++(60:5) arc[start angle=60, end angle=120, radius=5];
\draw[line width=1.5pt,color=Black!50] (0,0) ++(120:4) arc[start angle=120, end angle=360, radius=4];
\draw[line width=1.5pt,color=Black!25] (0,0) ++(120:4.5) arc[start angle=120, end angle=240, radius=4.5];

\node at ({4*cos(360)},{4*sin(360)}) {\large{$\textcolor{blue!30}{\bullet}$}};
\node at ({3.5*cos(360)},{3.5*sin(360)}) {\large{$\textcolor{blue!30}{\bullet}$}};
\node at ({3.5*cos(300)},{3.5*sin(300)}) {\large{$\textcolor{red!70}{\bullet}$}};
\node at ({4.5*cos(180)},{4.5*sin(180)}) {\large{$\textcolor{red!70}{\bullet}$}};
\node at ({4*cos(180)},{4*sin(180)}) {\large{$\textcolor{red!70}{\bullet}$}};
\node at ({4.5*cos(240)},{4.5*sin(240)}) {\large{$\textcolor{blue!30}{\bullet}$}};
\node at ({5*cos(120)},{5*sin(120)}) {\large{$\textcolor{blue!30}{\bullet}$}};
\node at ({5*cos(60)},{5*sin(60)}) {\large{$\textcolor{red!70}{\bullet}$}};

\node at (0,6.5) {\emph{Case 1}};
\node at (0,-7) {$\begin{matrix*}[l]
    \textcolor{Black}{\Tilde{H}_1=1236} & L=\textcolor{Black!75}{34}  \\ \Tilde{H}_2=\textcolor{Black!50}{12456} & \\ \Tilde{H}_3=\textcolor{Black!25}{456}
\end{matrix*}$};

\end{tikzpicture}};

\node at (-1.85,0) {\begin{tikzpicture}[scale=0.35, every node/.style={font=\small}]
\def\R{3}
\def\RR{2.5}
\def\RRR{3.5}
\node (v1) at ({\RR*cos(60)},{\RR*sin(60)}) {3};
\node (v2) at ({\RR*cos(360)},{\RR*sin(360)}) {2};
\node (v3) at ({\RR*cos(120)},{\RR*sin(120)}) {4};
\node  at ({\RR*cos(180)},{\RR*sin(180)}) {5};
\node  at ({\RR*cos(240)},{\RR*sin(240)}) {6};
\node  at ({\RR*cos(300)},{\RR*sin(300)}) {1};
\draw[dashed,gray] (0,0) circle (\R);
\draw[line width=1.5pt, color=Black] (0,0) ++(240:3.5) arc[start angle=240, end angle=420, radius=3.5];
\draw[line width=1.5pt,color=Black!75] (0,0) ++(60:5) arc[start angle=60, end angle=180, radius=5];
\draw[line width=1.5pt,color=Black!50] (0,0) ++(120:4) arc[start angle=120, end angle=360, radius=4];
\draw[line width=1.5pt,color=Black!25] (0,0) ++(120:4.5) arc[start angle=120, end angle=240, radius=4.5];

\node at ({4*cos(360)},{4*sin(360)}) {\large{$\textcolor{blue!30}{\bullet}$}};
\node at ({3.5*cos(360)},{3.5*sin(360)}) {\large{$\textcolor{blue!30}{\bullet}$}};
\node at ({3.5*cos(300)},{3.5*sin(300)}) {\large{$\textcolor{red!70}{\bullet}$}};
\node at ({4.5*cos(120)},{4.5*sin(120)}) {\large{$\textcolor{red!70}{\bullet}$}};
\node at ({4*cos(120)},{4*sin(120)}) {\large{$\textcolor{red!70}{\bullet}$}};
\node at ({4.5*cos(240)},{4.5*sin(240)}) {\large{$\textcolor{blue!30}{\bullet}$}};
\node at ({5*cos(120)},{5*sin(120)}) {\large{$\textcolor{red!70}{\bullet}$}};
\node at ({5*cos(60)},{5*sin(60)}) {\large{$\textcolor{blue!30}{\bullet}$}};

\node at (0,6.5) {\emph{Case 2}};
\node at (0,-7) {$\begin{matrix*}[l]
    \textcolor{Black}{\Tilde{H}_1=1236} & L=\textcolor{Black!75}{345}  \\ \Tilde{H}_2=\textcolor{Black!50}{12456} & \\ \Tilde{H}_3=\textcolor{Black!25}{456}
\end{matrix*}$};

\end{tikzpicture}};

\node at (1.85,0) {\begin{tikzpicture}[scale=0.35, every node/.style={font=\small}]
\def\R{3}
\def\RR{2.5}
\def\RRR{3.5}
\node (v1) at ({\RR*cos(60)},{\RR*sin(60)}) {3};
\node (v2) at ({\RR*cos(360)},{\RR*sin(360)}) {2};
\node (v3) at ({\RR*cos(120)},{\RR*sin(120)}) {4};
\node  at ({\RR*cos(180)},{\RR*sin(180)}) {5};
\node  at ({\RR*cos(240)},{\RR*sin(240)}) {6};
\node  at ({\RR*cos(300)},{\RR*sin(300)}) {1};
\draw[dashed,gray] (0,0) circle (\R);
\draw[line width=1.5pt, color=Black] (0,0) ++(240:3.5) arc[start angle=240, end angle=420, radius=3.5];
\draw[line width=1.5pt,color=Black!75] (0,0) ++(60:5) arc[start angle=60, end angle=180, radius=5];
\draw[line width=1.5pt,color=Black!50] (0,0) ++(120:4) arc[start angle=120, end angle=360, radius=4];
\draw[line width=1.5pt,color=Black!25] (0,0) ++(120:4.5) arc[start angle=120, end angle=240, radius=4.5];

\node at ({4*cos(360)},{4*sin(360)}) {\large{$\textcolor{blue!30}{\bullet}$}};
\node at ({3.5*cos(360)},{3.5*sin(360)}) {\large{$\textcolor{blue!30}{\bullet}$}};
\node at ({3.5*cos(300)},{3.5*sin(300)}) {\large{$\textcolor{red!70}{\bullet}$}};
\node at ({4.5*cos(120)},{4.5*sin(120)}) {\large{$\textcolor{red!70}{\bullet}$}};
\node at ({4*cos(120)},{4*sin(120)}) {\large{$\textcolor{red!70}{\bullet}$}};
\node at ({4.5*cos(240)},{4.5*sin(240)}) {\large{$\textcolor{blue!30}{\bullet}$}};
\node at ({5*cos(120)},{5*sin(120)}) {\large{$\textcolor{red!70}{\bullet}$}};
\node at ({5*cos(180)},{5*sin(180)}) {\large{$\textcolor{blue!30}{\bullet}$}};

\node at (0,6.5) {\emph{Case 3}};
\node at (0,-7) {$\begin{matrix*}[l]
    \textcolor{Black}{\Tilde{H}_1=1236} & L=\textcolor{Black!75}{345}  \\ \Tilde{H}_2=\textcolor{Black!50}{12456} & \\ \Tilde{H}_3=\textcolor{Black!25}{456}
\end{matrix*}$};

\end{tikzpicture}};

\node at (5.7,0) {\begin{tikzpicture}[scale=0.35, every node/.style={font=\small}]
\def\R{3}
\def\RR{2.5}
\def\RRR{3.5}
\node (v1) at ({\RR*cos(60)},{\RR*sin(60)}) {3};
\node (v2) at ({\RR*cos(360)},{\RR*sin(360)}) {2};
\node (v3) at ({\RR*cos(120)},{\RR*sin(120)}) {4};
\node  at ({\RR*cos(180)},{\RR*sin(180)}) {5};
\node  at ({\RR*cos(240)},{\RR*sin(240)}) {6};
\node  at ({\RR*cos(300)},{\RR*sin(300)}) {1};
\draw[dashed,gray] (0,0) circle (\R);
\draw[line width=1.5pt, color=Black] (0,0) ++(240:3.5) arc[start angle=240, end angle=420, radius=3.5];
\draw[line width=1.5pt,color=Black!75] (0,0) ++(60:5) arc[start angle=60, end angle=240, radius=5];
\draw[line width=1.5pt,color=Black!50] (0,0) ++(120:4) arc[start angle=120, end angle=360, radius=4];
\draw[line width=1.5pt,color=Black!25] (0,0) ++(120:4.5) arc[start angle=120, end angle=240, radius=4.5];

\node at ({4*cos(360)},{4*sin(360)}) {\large{$\textcolor{blue!30}{\bullet}$}};
\node at ({3.5*cos(360)},{3.5*sin(360)}) {\large{$\textcolor{blue!30}{\bullet}$}};
\node at ({3.5*cos(300)},{3.5*sin(300)}) {\large{$\textcolor{red!70}{\bullet}$}};
\node at ({4.5*cos(120)},{4.5*sin(120)}) {\large{$\textcolor{red!70}{\bullet}$}};
\node at ({4*cos(120)},{4*sin(120)}) {\large{$\textcolor{red!70}{\bullet}$}};
\node at ({4.5*cos(240)},{4.5*sin(240)}) {\large{$\textcolor{blue!30}{\bullet}$}};
\node at ({5*cos(120)},{5*sin(120)}) {\large{$\textcolor{red!70}{\bullet}$}};
\node at ({5*cos(240)},{5*sin(240)}) {\large{$\textcolor{blue!30}{\bullet}$}};

\node at (0,6.5) {\emph{Case 4}};
\node at (0,-7) {$\begin{matrix*}[l]
    \textcolor{Black}{\Tilde{H}_1=1236} & L=\textcolor{Black!75}{3456}  \\ \Tilde{H}_2=\textcolor{Black!50}{12456} & \\ \Tilde{H}_3=\textcolor{Black!25}{456}
\end{matrix*}$};

\end{tikzpicture}};

\end{tikzpicture}}
\caption{Four hypergraphs $\HH$ with the juxtaposed acyclic orientations $A$ and $B$ with sources respectively represented with $\textcolor{blue!30}{\bullet}$ and $\textcolor{red!70}{\bullet}$. Those four hypergraphs include one instance of each case discussed in the proof of \Cref{lem:existencehj}. }
\label{lemma}\end{figure}

\begin{proposition}\label{prop:twocycle-reg-cyc}
    Let $A$ and $B$ be two acyclic orientations of $\HH$. If $X^{AB}$ is cyclic with a $2$-cycle given by some $H \in \Hc$ and $H' \in \Hr$, then there exists an interval $D \subseteq [n]$ such that $\HH|_D$ has a hugging quadruple without a fix.
\end{proposition}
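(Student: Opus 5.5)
We start from \Cref{lem:twocycle-reg-cyc}: for the $2$-cycle formed by $H\in\Hc$ and $H'\in\Hr$ we have $X(H')<X(H)$ and $X(H),X(H')\in H^-$. Its proof also gives $H_{AB}\le X(H')$ and $X(H,X(H'))>X(H)$. So $X(H)\ne X(H,\ell')$ for some $\ell'$ with $H_{AB}\le\ell'<X(H)$. In particular $X(H)\neq X(H,H_{AB})$, which is the situation of \Cref{lem:wrap2}.

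Let $\ell$ be the smallest index with $X(H)=X(H,\ell)$. \Cref{lem:wrap2} then gives a second cyclic hyperedge $\bar H\in\Hc$ with $[1,\ell-1]\subseteq\bar H$ and $X(\bar H)=X(H,H_{AB})\in H^+$. These two cyclic hyperedges will be $\Ic,\Jc$ after restricting. For the regular pair we take $I:=H'$, which contains $X(H')<X(H)$ and is regular. For $J$ we take a regular hyperedge $L$ from a sequence realizing $X(H,\ell)$ (or from $X(H',H'_{AB})$) with $\ell\in L$ and $L_{AB}>\ell$. We then use \Cref{lem:existencehj}: no later hyperedge $\tilde H_i$ of the sequence realizing $X(H,\ell')\in H^+$ contains $L_{AB}$. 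This should tell us that the hyperedges wrapping through $n$ do not reach $L_{AB}$ from the right side.

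Next we choose the interval $D=[x,y]$. We take $x$ just below $\ell$, roughly $x=\ell-1$ or the left end of the relevant hyperedges, and $y$ around $L_{AB}$ or $X(H)$, so that the following hold. $D$ cuts $H$ and $\bar H$ so that both restrictions become cyclic in $\HH|_D$, i.e. they contain $x$ and $y$ but miss something in between. One restricted regular hyperedge ($H'$ or the one containing $\ell-1,\ell$) contains $\{x,x+1\}$. Another ($L$ restricted) contains $\{y-1,y\}$. For this we need to check the containments $\{x,x+1\}\subseteq I\cap\Ic$ and $\{y-1,y\}\subseteq J\cap\Jc$, which follow from $[1,\ell-1]\subseteq\bar H$, from $H_{AB}\le X(H')<X(H)$ all lying in $H^-$, and from \Cref{lem:Hreg}(1).

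The main obstacle is showing that $\HH|_D$ has no fix. Suppose some $F\in\HH$ has $F\cap D\supseteq\{x+1,y-1\}$ and lies inside the union of the four. We would use $F$ to build an $(H,\ell)$-sequence, or an $(H',H'_{AB})$-sequence, that reaches beyond $X(H)$ or $X(H')$, or to exhibit a cycle in $A$ or $B$. The argument depends on whether $F$ is regular or cyclic and on where $F_{AB}$ sits relative to $\ell$ and $X(H)$. We would first try the concatenation $(H,\ell)\|(F,F_{AB})\|\cdots$, patterned on the case analysis of \Cref{lem:existencehj}. If $F_{AB}$ lies below $\ell$, acyclicity as in \Cref{lem:Hreg}(1) should rule it out. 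Choosing $D$ so that these cases close is the delicate step, and the precise endpoints may need adjusting once this analysis is done.
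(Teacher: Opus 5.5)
There is a genuine gap, and it sits at the step you flag as the main obstacle. Your plan is to fix one quadruple in advance, with $\Ic=H\cap D$ and $\Jc=\bar H\cap D$, and show by contradiction that it has no fix. But such a quadruple can genuinely have a fix, so no contradiction is available.

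Your specific choices also fail on their own terms:
\begin{itemize}
\item Since $X(H)\notin\bar H$ and $\max(\bar H^-)=\ell-1$, the restriction $\bar H\cap D$ is cyclic only if $y\in\bar H^+$. So $y>X(H)$, not ``around $X(H)$ or $L_{AB}$''.
\item If $J=L\cap D$ is regular with $\ell=x+1\in L$ and $\{y-1,y\}\subseteq J$, then $J\supseteq[x+1,y]$. So $J$ is itself a fix of your quadruple, since a fix need not be distinct from the four hyperedges.
\item Such an $L$ need not exist at all.
\end{itemize}
The paper's first quadruple lives on $D^1=[\ell-1,\tilde h_r]$, where $(\tilde H_i,\tilde h_i)_{i=1}^r$ is a minimal sequence realizing $X(H,\ell-1)$. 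It takes $I=H'$, $\Ic=H$, $\Jc=\tilde H_2=\bar H$ and $J=\tilde H_r$; the last of these misses $\ell-1$ by acyclicity of $A$ and $B$.

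Here is an example where a fix exists. Take $n=8$ and
\begin{itemize}
\item $H=\{1,2,3,7,8\}$, $H'=\{2,3\}$, $K=\{1,2,5,6,7,8\}$, $M=\{5,6,7\}$, $F=\{1,2,3,6,7,8\}$;
\item $A(H)=A(H')=A(F)=2$, $A(K)=A(M)=5$;
\item $B(H)=B(H')=B(K)=B(F)=2$, $B(M)=7$.
\end{itemize}
Both orientations are acyclic, with $H_{AB}=H'_{AB}=F_{AB}=2$, $K_{AB}=5$ and $M_{AB}=7$. One computes $X(H)=X(H,3)=3$, while $X(H,2)=7$ via $(H,2),(K,5),(M,7)$, and $X(H')=2$. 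So $H,H'$ form a $2$-cycle, with $\ell=3$, $\bar H=K$ and $\tilde H_r=M$.

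In this example:
\begin{itemize}
\item No hyperedge through $3$ has $AB$-value above $3$, so your $L$ does not exist.
\item The only hugging quadruple in which $H$ restricts to a cyclic hyperedge lives on $[2,7]$, and it has the fix $F\cap[2,7]=\{2,3,6,7\}$.
\item The unique fix-free quadruple is $\{2,3\},\{2,3,6\},\{5,6\},\{2,5,6\}$ on $[2,6]$. Its cyclic member through $\{2,3\}$ comes from $F$, not from $H$.
\end{itemize}

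The missing idea is the paper's descent. Its Claim, using \Cref{lem:existencehj} and acyclicity of $A$ and $B$, shows that any fix $L^1$ must be cyclic, with $\tilde h_2\notin L^1$, $\ell-1,\ell\in(L^1)^-$ and $\tilde h_r-1,\tilde h_r\in(L^1)^+$. The paper then builds a new quadruple on the shorter interval $[\ell-1,\min((L^1)^+)]$:
\begin{itemize}
\item $L^1$ replaces $H$ as $\Ic$;
\item $I=H'$ and $\Jc=\tilde H_2$ are kept;
\item $J$ becomes the hyperedge $\tilde H_{p+1}$ of the realizing sequence that covers $\min((L^1)^+)$.
\end{itemize}
It iterates this, and since the sets $(L^i)^+$ strictly grow, finiteness of $\HH$ forces a fix-free quadruple. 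Without such an adaptive choice, a quadruple whose $\Ic$ comes from $H$, as in your proposal, cannot in general be shown to be fix-free, because it need not be.
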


\begin{proof}
We keep the same notation as in the proof of \Cref{lem:twocycle-reg-cyc} and we now require $\ell$ to be the smallest index in $H$ such that $X(H, \ell) = X(H)$. From the proof of \Cref{lem:twocycle-reg-cyc} we have that $H_{AB}\leq X(H')<X(H)<X(H,X(H'))$ which implies that $H_{AB}<\ell$. Hence, Let $(\tilde{H}_i, \tilde{h}_i)_{i = 1}^r$ be a minimal sequence realizing $X(H, \ell - 1)$.  Minimality of the sequence here means that $\tilde{h}_i\notin \tilde{H}_{i+2}$ for $i\in [r-2]$. By minimality of $\ell$ we have that  $\tilde{H}_2 \in \Hc$ with $\ell - 1 = \tilde{h}_1 = \max (\tilde{H}_2^-)$ and $\tilde{h}_2 \in (\tilde{H}_2^+)$. The acyclicity of $A$ and $B$ gives us that $X(H) < \tilde{h}_2 < \min(H^+)$ which also implies that $[X(H'),X(H)]\subset H$. Arguing by concatenation, we can show that $X(H') \le \ell - 1$.  To summarize, the following relations hold and refer the reader to \Cref{fig:prop5.9.1}:

\[
H_{AB} \le X(H') \le \ell - 1 =  \tilde{h}_1 < \ell \le X(H) < \tilde{h}_2 < \min(H^+) \le \tilde{h}_r = X(H, \ell - 1).
\]

\begin{figure}[H]
 \scalebox{1}{\begin{tikzpicture}
 
\def\R{1.8}
\def\RR{1.25}
\node (v1) at ({3.9*cos(280)},{3.9*sin(280)}) {$1$};
\draw[dashed] ({\R*cos(280)},{\R*sin(280)}) -- ({3.5*cos(280)},{3.5*sin(280)});
\node (v2) at ({3.9*cos(260)},{3.9*sin(260)}) {$n$};
\draw[dashed] ({\R*cos(260)},{\R*sin(260)}) -- ({3.5*cos(260)},{3.5*sin(260)});
\node (v3) at ({(3.9*cos(350))+0.3},{(3.9*sin(350))}) {$\ell-1=\Tilde{h}_1$};
\draw[dashed] ({\R*cos(350)},{\R*sin(350)}) -- ({3.5*cos(350)},{3.5*sin(350)});
\node (v4) at ({3.9*cos(360)},{3.9*sin(360)}) {$\ell$};
\draw[dashed] ({\R*cos(360)},{\R*sin(360)}) -- ({3.5*cos(360)},{3.5*sin(360)});

\draw[dashed,gray] (0,0) circle (\R);

\draw[line width=1.75pt, draw=red!70] (0,0) ++(220:2.75) arc[start angle=220, end angle=400, radius=2.75];
\node at ({2.75*cos(380)},{2.75*sin(380)}) {$\bullet$};
\node at ({3.9*cos(380)},{3.9*sin(380)}) {$X(H)$};
\draw[dashed] ({\R*cos(380)},{\R*sin(380)}) -- ({3.5*cos(380)},{3.5*sin(380)});

\node at ({2.75*cos(320)},{2.75*sin(320)}) {$\bullet$};
\node at ({3.9*cos(320)},{3.9*sin(320)}) {$H_{AB}$};
\draw[dashed] ({\R*cos(320)},{\R*sin(320)}) -- ({3.5*cos(320)},{3.5*sin(320)});

\node at ({2.75*cos(350)},{2.75*sin(350)}) {$\bullet$};


 \draw[line width=1.75pt, draw=blue!30] (0,0) ++(300:2.5) arc[start angle=300, end angle=390, radius=2.5];

 \node at ({2.5*cos(330)},{2.5*sin(330)}) {$\bullet$};
\node at ({3.9*cos(330)},{3.9*sin(330)}) {$X(H')$};
\draw[dashed] ({\R*cos(330)},{\R*sin(330)}) -- ({3.5*cos(330)},{3.5*sin(330)});


\draw[line width=1.75pt, draw=yellow] (0,0) ++(180:2.25) arc[start angle=180, end angle=350, radius=2.25];
\node at ({2.25*cos(190)},{2.25*sin(190)}) {$\bullet$};
\node at ({3.9*cos(190)},{3.9*sin(190)}) {$\Tilde{h}_2$};
\draw[dashed] ({\R*cos(190)},{\R*sin(190)}) -- ({3.5*cos(190)},{3.5*sin(190)});

 
\draw[line width=1.75pt, draw=orange!70] (0,0) ++(210:2.5) arc[start angle=210, end angle=250, radius=2.5];
\node at ({2.5*cos(240)},{2.5*sin(240)}) {$\bullet$};
\node at ({(3.9*cos(240))-0.2},{3.9*sin(240)}) {$\Tilde{h}_r=X(H,\ell-1)$};
\draw[dashed] ({\R*cos(240)},{\R*sin(240)}) -- ({3.5*cos(240)},{3.5*sin(240)});


\node at (0,0.75) {$H := \color{red!70}{\bullet}$};
\node at (0,0.25) {$H' := \color{blue!30}{\bullet}$};
\node at (0,-0.25) {$\Tilde{H}_2 := \color{yellow}{\bullet}$};
\node at (0,-0.75) {$\Tilde{H}_r := \color{orange!70}{\bullet}$};

\end{tikzpicture}}
 \caption{Example of hyperedges illustrating the information at the beginning of the proof of \Cref{lem:twocycle-reg-cyc}}\label{fig:prop5.9.1}
\end{figure}

We first show that there exists an interval $D^1 \subseteq [n]$ such that $\HH|_{D^1}$ has a hugging quadruple. Let $D^1 = [\ell - 1, \tilde{h}_r]$. 

    \begin{itemize}
        \item Let $I^1 = H'\cap D^1$. First, $\mathbf{\ell - 1, \ell} \in [X(H'), X(H)] \subseteq \mathbf{H'}$. Secondly, $\tilde{h}_r \notin H'$, otherwise $(H'_i, h'_i)_{i = 1}^s\| (\tilde{H}_i, \tilde{h}_i)_{i = 1}^r$ is an $(H', H'_{AB})$-sequence. With $H' \in \Hr$, we conclude that $\mathbf{I^1 \in {(\HH|_{D^1})}_{\reg}}$.
        
        \item Let $\Ic^1 = H\cap D^1$. Similarly, $\mathbf{\ell - 1, \ell \in} [X(H'), X(H)] \subseteq \mathbf{H}$. Moreover, $\tilde{h}_r \in H$, so $\mathbf{\Ic^1 \in {(\HH|_{D^1})}_{\cyc}}$.

        \item Let $J^1 = \tilde{H}_r\cap D^1$. Immediately $\mathbf{\tilde{h}_r \in J^1}$. This implies $\mathbf{\tilde{h}_r - 1 \in J^1}$. By acyclicity of $A$ and $B$, we have that $H_{AB}\notin \tilde{H}_r$, hence $\ell - 1 \notin \tilde{H}_r$, so $\mathbf{J^1 \in {(\HH|_{D^1})}_{\reg}}$.

        \item Let $\Jc^1 = \tilde{H}_2\cap D^1$. Since $\tilde{h}_2 < \tilde{h}_r$ and $\tilde{h}_2 \in \tilde{H}_2^+$, we have $\mathbf{\tilde{h}_r-1, \tilde{h}_r \in \Jc^1}$. Recall also $\ell - 1 = \tilde{h}_2 \in \tilde{H}_2$, so $\mathbf{\Jc^1 \in {(\HH|_{D^1})}_{\cyc}}$. 
    \end{itemize}
Therefore, $I^1, \Ic^1, J^1, \Jc^1$ is a hugging quadruple of $\HH|_{D^1}$ (the bold texts show that they satify the conditions in \Cref{def:hug}). If this quadruple does not have a fix in $\HH|_{D^1}$, then we are done. Now suppose there exists an $L^1 \in \HH$ such that $L^1\cap D^1$ is a fix of the quadruple. Namely, 
\[
\{\ell, \tilde{h}_r - 1\} \subseteq L^1\cap D^1 \subseteq I^1 \cup \Ic^1 \cup J^1 \cup \Jc^1.
\]
This also implies that 
\[
\{\ell, \tilde{h}_r - 1\} \subseteq L^1 \subseteq  H' \cup H \cup \tilde{H}_r \cup \tilde{H}_2.
\]

\begin{claim}\label{clm:cyclicfix}
    $\tilde{h}_2 \notin L^1$. In particular, the interval $[\ell, \tilde{h}_r - 1]$ is not contained in $L^1$, so $L^1 \in \Hc$ and $\ell - 1, \ell \in (L^1)^-$ and $\tilde{h}_r - 1, \tilde{h}_r \in (L^1)^+$. 
\end{claim}
\begin{proof}[Proof of \Cref{clm:cyclicfix}]
Suppose for contradiction that $\tilde{h}_2 \in L^1$ and assume without loss of generality that $\tilde{h}_2  = A(\tilde{H}_2)$. We discuss two cases based on where $L^1_{AB}$ lies. 

\emph{Case 1.} 
Suppose $L^1_{AB} \le \ell$. Then $A(L^1), B(L^1) \in H^-$. Recall that $A(H), B(H) \le H_{AB} \le \tilde{h}_1$, so $A(H), B(H) \in \tilde{H}_2^-$. Then the orientation $A$ gives
\[
A(H)\xrightarrow{H}A(L^1)\xrightarrow{L^1}A(\tilde{H}_2)\xrightarrow{\tilde{H}_2} A(H).
\]
Thus $A$ is cyclic (even if $A(H) = A(L^1)$), a contradiction. 

\emph{Case 2.} Suppose $L^1_{AB} > \ell$. Since $\ell\in L^1$, by \Cref{lem:existencehj} we have that $L^1_{AB} \notin \tilde{H}_2 \cup \tilde{H}_r$. If $L^1_{AB} \in H'$, then $A(H')\in L^1$ since $L^1_{AB}\notin \tilde{H}_2$ and $\{\ell, \tilde{h}_r - 1\} \subseteq L^1 \subseteq  H' \cup H \cup \tilde{H}_r \cup \tilde{H}_2$. Therefore, the orientation $A$ gives 
        
    $$A(H')\xrightarrow{H'}A(L^1)\xrightarrow{L^1}A(\tilde{H}_2)\xrightarrow{\tilde{H}_2} A(H').$$

Thus $A$ is cyclic (even if $A(H') = A(L^1)$), a contradiction. If $L^1_{AB} \in H$, we use the same above reasoning to get that $A$ is cyclic, contradiction.
\end{proof}
By \Cref{clm:cyclicfix}, the following inequality holds and refer the reader to \Cref{fig:prop5.9.2}:
\[
\tilde{h}_2 < \min((L^1)^+) \le \tilde{h}_r -1 < \tilde{h}_r.
\]

\begin{figure}[H]
 \scalebox{1}{\begin{tikzpicture}
 
\def\R{1.8}
\def\RR{1.25}
\node (v1) at ({3.9*cos(280)},{3.9*sin(280)}) {$1$};
\draw[dashed] ({\R*cos(280)},{\R*sin(280)}) -- ({3.5*cos(280)},{3.5*sin(280)});
\node (v2) at ({3.9*cos(260)},{3.9*sin(260)}) {$n$};
\draw[dashed] ({\R*cos(260)},{\R*sin(260)}) -- ({3.5*cos(260)},{3.5*sin(260)});
\node (v3) at ({(3.9*cos(350))+0.3},{(3.9*sin(350))}) {$\ell-1$};
\draw[dashed] ({\R*cos(350)},{\R*sin(350)}) -- ({3.5*cos(350)},{3.5*sin(350)});
\node (v4) at ({3.9*cos(360)},{3.9*sin(360)}) {$\ell$};
\draw[dashed] ({\R*cos(360)},{\R*sin(360)}) -- ({3.5*cos(360)},{3.5*sin(360)});

\draw[dashed,gray] (0,0) circle (\R);

\draw[line width=1.75pt, draw=red!70] (0,0) ++(220:2.75) arc[start angle=220, end angle=400, radius=2.75];



 \draw[line width=1.75pt, draw=blue!30] (0,0) ++(300:2.5) arc[start angle=300, end angle=390, radius=2.5];


\draw[line width=1.75pt, draw=yellow] (0,0) ++(180:2.25) arc[start angle=180, end angle=350, radius=2.25];
\node at ({3.9*cos(190)},{3.9*sin(190)}) {$\Tilde{h}_2$};
\draw[dashed] ({\R*cos(190)},{\R*sin(190)}) -- ({3.5*cos(190)},{3.5*sin(190)});

 
\draw[line width=1.75pt, draw=orange!70] (0,0) ++(210:2.5) arc[start angle=210, end angle=250, radius=2.5];
\node at ({(3.9*cos(240))-0.2},{3.9*sin(240)}) {$\Tilde{h}_r$};
\draw[dashed] ({\R*cos(240)},{\R*sin(240)}) -- ({3.5*cos(240)},{3.5*sin(240)});

\node at ({(3.9*cos(230))-0.2},{3.9*sin(230)}) {$\Tilde{h}_{r-1}$};
\draw[dashed] ({\R*cos(230)},{\R*sin(230)}) -- ({3.5*cos(230)},{3.5*sin(230)});


\draw[line width=1.75pt, draw=gray] (0,0) ++(200:3) arc[start angle=200, end angle=370, radius=3];


\node at (0,1) {$H := \color{red!70}{\bullet}$};
\node at (0,0.5) {$H' := \color{blue!30}{\bullet}$};
\node at (0,0) {$\Tilde{H}_2 := \color{yellow}{\bullet}$};
\node at (0,-0.5) {$\Tilde{H}_r := \color{orange!70}{\bullet}$};
\node at (0,-1) {$L^1 := \color{gray}{\bullet}$};

\node at (-1.6,1.6) {$D^1$};


\draw[dashed,line width=1.75pt, draw=black] (0,0) ++(-10:1.8) arc[start angle=-10, end angle=240, radius=1.8];

\end{tikzpicture}}
 \caption{Example of hyperedges illustrating the information given by \Cref{clm:cyclicfix} in the proof of \Cref{lem:twocycle-reg-cyc}.}\label{fig:prop5.9.2}
\end{figure}

Let $D^2 = [\ell - 1, \min((L^1)^+)] \subset D^1$ and consider $\HH|_{D^2}$. We find a new hugging quadruple as follows.

\begin{itemize}
    \item Let $I^2 = H' \cap D^2$. As before, $\mathbf{\ell - 1, \ell \in} [X(H'), X(H)] \subseteq \mathbf{H'}$. Secondly, $\min((L^1)^+) \notin H'$, otherwise $\tilde{h}_2 \in [X(H'), \min((L^1)^+)] \subseteq H'$, causing at least one of $A$ and $B$ to be cyclic. Therefore, together with $H' \in \Hr$, we conclude that $\mathbf{I^2 \in {(\HH|_{D^2})}_{\reg}}$.
    \item Let $\Ic^2 = L^1 \cap D^2$. By \Cref{clm:cyclicfix}, $\mathbf{\ell - 1, \ell \in L}$. Trivially $\min((L^1)^+) \in L^1$, so $\mathbf{\Ic^2 \in {(\HH|_{D^2})}_{\cyc}}$. 
    
    \item Let $J^2 = \tilde{H}_{p^1+1} \cap D^2$, where $p^1 \in [2, r-1]$ such that $\min((L^1)^+) \in (\tilde{h}_{p^1}, \tilde{h}_{p^1+1}]$. Moreover, $[\tilde{h}_{p^1}, \tilde{h}_{p^1+1}] \subseteq \tilde{H}_{p^1+1}$. This follows from the fact that $X(H) < \tilde{h}_{p^1}$ and $X(H) \in (\tilde{H}_{p^1+1})_-$ would extend the $(H, \ell)$-sequence. Therefore, $\mathbf{\min((L^1)^+) - 1 \in \tilde{H}_{p^1+1}}$ and $\mathbf{\min((L^1)^+) \in \tilde{H}_{p^1+1}}$. The minimality of $(\tilde{H}_i, \tilde{h}_i)_{i = 1}^r$ implies that $\ell - 1 \notin \tilde{H}_{p^1+1}$ and thus $\mathbf{J^2 \in {(\HH|_{D^2})}_{\reg}}$.

    \item Let $\Jc^2 = \tilde{H}_2 \cap D^2$. As before, since $\ell - 1 \in (\tilde{H}_2)^-$, $\tilde{h}_2 \in (\tilde{H}_2)^+$, and $\tilde{h}_2 < \min((L^1)^+)$, it follows that $\mathbf{\min((L^1)^+), \min((L^1)^+)-1 \in \Jc^2}$ and $\mathbf{\Jc^2 \in {(\HH|_{D^2})}_{\cyc}}$.
\end{itemize}

Therefore, $I^2, \Ic^2, J^2, \Jc^2$ is a hugging quadruple of $\HH|_{D^2}$. If this quadruple does not have a fix in $\HH|_{D^2}$, then again we are done. 

Now suppose there exists an $L^2 \in \HH$ such that $L^2\cap D^2$ is a fix of the quadruple. Namely, 
\[
\{\ell, \min((L^1)^+) - 1\} \subseteq L^2\cap D^2 \subseteq I^2 \cup \Ic^2 \cup J^2 \cup \Jc^2.
\]
This also implies that 
\[
\{\ell, \min((L^1)^+) - 1\} \subseteq L^2 \subseteq  H' \cup L^1 \cup \tilde{H}_{p^1+1} \cup \tilde{H}_2 \subseteq H' \cup H \cup \tilde{H}_r \cup \tilde{H}_2 \cup \tilde{H}_{p^1+1}.
\]
We want to show that $[\ell, \min((L^1)^+) - 1] \nsubseteq L^2$. Suppose otherwise for contradiction. This implies that $\tilde{h}_2\in L^2$ and following the proof of \Cref{clm:cyclicfix} we get a contradiction (the additional case that $L^2_{AB} > \ell$ and $L^2_{AB} \in \tilde{H}_{p^1+1}$ is ruled out by \Cref{lem:existencehj}).

Restrict $\HH$ repeatedly and find new hugging quadruples as defined above. If $L^i$ is a fix for the hugging quadruple $I^i, \Ic^i, J^i, \Jc^i$, then take $D^{i+1} = [\ell - 1, \min((L^i)^+)]$, and 
\begin{itemize}
    \item $I^{i+1} = H' \cap D^{i+1}$, 
    \item $\Ic^{i+1} = L^i \cap D^{i+1}$, 
    \item $J^{i+1} = \tilde{H}_{p^i+1} \cap D^{i+1}$, where $p^i \in [2, p^{i-1}-1]$ such that $\min((L^i)^+) \in (\tilde{h}_{p^i}, \tilde{h}_{p^i+1}]$.
    \item $\Jc^{i+1} = \tilde{H}_2 \cap D^{i+1}$.
\end{itemize}

By construction, $(L^i)^+\subset (L^{i+1})^+$, which implies that $L^i\neq L^{i+1}$. Since $\tilde{H}_{p^i+1}$ cannot contain $\tilde{h}_2$ and $\HH$ is finite this procedure must terminate. Therefore, there must exist a hugging quadruple without a fix, as desired.
\end{proof}

\begin{proposition}\label{prop:twocycle}
    Let $A$ and $B$ be two acyclic orientations of $\HH$. If $X^{AB}$ is cyclic with a $2$-cycle given by some $H, H' \in \HH$, then there exists an interval $D \subseteq [n]$ such that $\HH|_D$ has a hugging quadruple without a fix.
\end{proposition}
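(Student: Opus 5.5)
Since \Cref{prop:twocycle-reg-cyc} already treats the mixed case, I will split according to the types of $H$ and $H'$: both regular, one regular and one cyclic, and both cyclic. In the mixed case there is nothing to do beyond citing \Cref{prop:twocycle-reg-cyc}. In each remaining case the aim is either a direct contradiction or a reduction to a configuration already handled.

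\emph{Both regular.} By \Cref{lem:Hreg}(2) we have $X(H)=X(H,H_{AB})$ and $X(H')=X(H',H'_{AB})$. Assume without loss of generality that $X(H)<X(H')$; both values lie in $H\cap H'$ because of the $2$-cycle. I would rerun Case 1 of \Cref{lem:twocycle-reg-cyc}, which only used that $H'$ is regular. If $H'_{AB}>X(H)$, then appending the sequence realizing $X(H')$ to the one realizing $X(H)$ gives an $(H,H_{AB})$-sequence ending at $X(H')\in H$. Otherwise $X(H)$ lies in some $[h'_j,h'_{j+1}]\subseteq H'_{j+1}$ by \Cref{lem:Hreg}(1), and concatenating the tail again beats $X(H)$. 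Either way we contradict maximality, so this case is impossible and the statement holds vacuously. Note that closure of $\Hr$ under intersection is not even needed here.

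\emph{Both cyclic.} This is the genuine new case. Write $X(H)=X(H,\ell)$ and $X(H')=X(H',\ell')$ with $\ell,\ell'$ minimal, and suppose $X(H')<X(H)$.
\begin{enumerate}
\item First I would handle the situation $\ell=H_{AB}$ and $\ell'=H'_{AB}$. Concatenation arguments, together with the observation that sources of $A$ and $B$ cannot straddle a wrap-around (the acyclicity trick of \Cref{lem:Hreg}(1)), should give a contradiction exactly as in the regular case. The one point to check is that a wrap from $H^-$ to $H^+$ inside an intermediate cyclic hyperedge forces $A$ or $B$ to be cyclic.
\item Otherwise, apply \Cref{lem:wrap2} to the hyperedge with $X\neq X(\cdot,\cdot_{AB})$. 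This produces a cyclic $\bar H$ with $[1,\ell-1]\subseteq\bar H$ and $X(\bar H)=X(H,H_{AB})\in H^+$.
\item Next I would locate the configuration of \Cref{prop:twocycle-reg-cyc}: an index $\ell-1=\tilde h_1$, a wrapping hyperedge $\tilde H_2$, and a last hyperedge $\tilde H_r$. In that proof the regular edge $H'$ was used only through $\ell-1,\ell\in H'$ and $\tilde h_r\notin H'$. I would replace $H'$ by $H'\cap D^1$ with $D^1=[\ell-1,\tilde h_r]$ and argue that this restriction is regular. The element $\tilde h_r\notin H'$ follows by the same concatenation argument. The remaining point is that $H'$ cannot wrap inside $D^1$; here one shows $X(H')\in (H')^-$ and that $(H')^+$ starts beyond $\tilde h_r$, using acyclicity of $A$ and $B$ as in \Cref{lem:twocycle-reg-cyc} Case 2.
\item With that established, the iterative construction of hugging quadruples $D^{i}$, including \Cref{clm:cyclicfix} and \Cref{lem:existencehj}, carries over verbatim. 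It terminates with a hugging quadruple without a fix.
\end{enumerate}

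The main obstacle is the both-cyclic case, specifically step 3. There one must prove the analogue of \Cref{lem:twocycle-reg-cyc}, namely that $X(H),X(H')$ lie in the same minus-part and are ordered so that the larger belongs to the edge whose pseudo-join value was pulled back by a wrap. One must also check that the restricted $H'$ behaves as a regular edge on $D^1$. I would first try to show $(H')^+\cap D^1=\emptyset$ by contradiction. If it failed, some element of $(H')^+$ would lie below $\tilde h_r$, and combining this with the wrapping edge $\tilde H_2$ would yield a $3$-cycle in $A$ or $B$ of the same shape as in \Cref{clm:cyclicfix}.
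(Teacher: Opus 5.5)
Your regular/regular and mixed cases are fine and match the paper. The gap is in the both-cyclic case. Everything you list under ``the main obstacle'' is exactly what has to be proved there: that $X(H),X(H')\in H^-\cap H'^-$, that the larger value belongs to the edge with $\ell>H_{AB}$, and that $H'\cap D^1$ is regular. You leave all of this unproved, with only an unverified 3-cycle sketch.

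Step 1 also does not go through ``exactly as in the regular case''. Take $X(H')<X(H)=X(H,H_{AB})$ and $H_{AB}\le X(H')$, and let $h_j\le X(H')<h_{j+1}$ be the step of the sequence realizing $X(H,H_{AB})$ that crosses $X(H')$. Its edge $H_{j+1}$ may wrap with $X(H')\notin H_{j+1}$. If moreover $X(H')\in H^-$ and $X(H)\in H^+$, then $h_{j+1}$ can lie in the gap between $H^-$ and $H^+$. Then $h_{j+1}\notin H$, and the straddle trick gives no cycle in $A$ or $B$. You only checked wraps landing in $H^+$.

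Finally, splitting on whether $\ell=H_{AB}$ and $\ell'=H'_{AB}$ is the wrong axis. In step 2 the pulled-back edge could a priori be the one with the smaller value. In that case the construction of \Cref{prop:twocycle-reg-cyc} does not apply, and you have not excluded it.

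The missing idea is a case split on where the larger value $X(H)$ sits in $H'$.
\begin{itemize}
\item Suppose $X(H)\in H'^+$. Then every integer $\ge X(H)$ lies in $H'$. Append to the sequence realizing $X(H')=X(H',\ell')$ the tail of a sequence realizing $X(H,X(H'))$ when $H_{AB}\le X(H')$ (note $X(H,X(H'))\ge X(H)$), or one realizing $X(H,H_{AB})$ when $H_{AB}>X(H')$. Either way you get an $(H',\ell')$-sequence ending above $X(H')$, a contradiction.
\item Hence $X(H)\in H'^-$, so $[X(H'),X(H)]\subseteq H'^-$. This interval containment is all that Case~2 of \Cref{lem:twocycle-reg-cyc} used about $H'$. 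It gives $H_{AB}\le X(H')<X(H)<X(H,X(H'))$.
\item Now use the sequence realizing $X(H,X(H'))$: its step across $X(H)$ lands strictly beyond $X(H)$, hence in $H^+$ if $X(H)\in H^+$. This rules out $X(H)\in H^+$, and it is what actually repairs your step~1.
\item In the remaining case $X(H),X(H')\in H^-\cap H'^-$, the setup of \Cref{prop:twocycle-reg-cyc} applies verbatim, including $\ell>H_{AB}$. Concatenation gives $\tilde h_r\notin H'$, and together with $\tilde h_r>X(H)\in H'^-$ this forces $\tilde h_r<\min(H'^+)$. So $H'\cap D^1$ is regular, with no 3-cycle argument needed, and the rest of that proof carries over.
\end{itemize}
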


\begin{proof}
\Cref{prop:twocycle-reg-cyc} covers the case when $H \in \Hc$ and $H' \in \Hr$, while \Cref{lem:Hreg} covers the case when $H, H' \in \Hr$.

Therefore, suppose $H, H' \in \Hc$. Assume without loss of generality that $X(H) > X(H')$ and consider the sequences $(H_i, h_i)_{i = 1}^k$ and $(H'_i, h'_i)_{i = 1}^s$ respectively realizing $X(H) = X(H, \ell)$ and $X(H') = X(H', \ell')$. By hypothesis, $X(H), X(H') \in H \cap H'$. 

\emph{Case 1}. Suppose $X(H) \in H'^+$. 

\begin{itemize}
    \item Suppose $H_{AB} \le X(H')$. Hence, $X(H, X(H')) \ge X(H)$, so $X(H, X(H')) \in H'^+$. This means we can extend $(H'_i, h'_i)_{i = 1}^s$ and since $X(H, X(H')) > X(H')$, we get a contradiction.

    \item Suppose $H_{AB} > X(H')$. Since $X(H, H_{AB}) \ge X(H)$ implies that $X(H, H_{AB}) \in H'^+$ and since $X(H, H_{AB}) > X(H')$, we can extend $(H'_i, h'_i)_{i = 1}^s$, contradiction. 
\end{itemize}

\emph{Case 2}. Suppose $X(H) \in H'^-$, then automatically $X(H') \in H'^-$. 
    \begin{itemize}
        \item Suppose $X(H) \in H^+$ and $X(H') \in H^-$ or both $X(H), X(H') \in H^+$. Then we reach a contradiction by the same argument as in \emph{Case 2.} of \Cref{lem:twocycle-reg-cyc}.
        \item Suppose $X(H), X(H') \in H^-$. To avoid extending  $(H'_i, h'_i)_{i = 1}^s$ with $(H, H_{AB})$, we have $H_{AB} \le X(H')$ and $X(H, X(H')) > X(H)$. We follow the notation in the proof of \Cref{prop:twocycle-reg-cyc}. As in the first bullet point of that proof, $\tilde{h}_r \notin H'$, otherwise we can extend $(H'_i, h'_i)_{i = 1}^s$. In particular, $\tilde{h}_r < \min(H'^+)$. We can conclude that $\mathbf{I^1} = H'\cap D^1 \mathbf{\in {(\HH|_{D^1})}_{\reg}}$. We finish the argument by continuing with the rest of the proof of~\Cref{prop:twocycle-reg-cyc}.
    \end{itemize}
\end{proof}

\subsection{Join}\label{subsec:join}

In the previous sections, we have shown that the pseudo-join $X^{AB}$ of two acyclic orientations $A$ and $B$ is acyclic and therefore an element in our poset $P_\HH$. In this section, we conclude the proof of \Cref{thmcyclic} by showing that $X^{AB}$ is indeed the join of $A$ and $B$. 

\begin{proposition}\label{join}
    Let $A$ and $B$ be two incomparable acyclic orientations of $\HH$. If ${(\HH|_D)}_{\reg}$ is closed under intersection for $\HH|_D$ for any interval $D=[x, y]$ where $1\le x < y \le n$, then the pseudo-join $X^{AB}$ of $A$ and $B$ is the join of $A$ and $B$.
\end{proposition}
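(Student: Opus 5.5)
We need to prove that $X^{AB}$ is the join; by the previous subsection it is acyclic, so it is an element of $P_\HH$. By \Cref{sourcecharacterization} it then suffices to prove two things. First, $X^{AB}$ is an upper bound: $A(H),B(H)\le X(H)$ for all $H$. Second, for every acyclic $C$ with $A,B\le C$ we have $X(H)\le C(H)$ for every $H\in\HH$.

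The upper bound property is immediate from the definitions. Every $\ell$ in the minimum defining $X(H)$ satisfies $\ell\ge H_{AB}$. Every $(H,\ell)$-sequence has increasing $h_i$, so $X(H,\ell)\ge h_1=\ell\ge H_{AB}=\max\{A(H),B(H)\}$.

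For minimality, fix an acyclic upper bound $C$ and a hyperedge $H$, and set $c:=C(H)\ge H_{AB}$. Since $c$ is an admissible index, $X(H)\le X(H,c)$. It therefore suffices to show $X(H,c)=c$, that is, no $(H,c)$-sequence ends at a value $>c$.

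Suppose $(H_i,h_i)_{i=1}^k$ is an $(H,c)$-sequence with $h_k>c$. I would show by induction on $i\ge 2$ that $C(H_i)\ge h_i$ and that $C$ forces a directed path $c=C(H)\to\cdots$ through $H_2,\dots,H_k$. The inequality $C(H_i)\ge (H_i)_{AB}=h_i$ holds because $C\ge A,B$. The point is to show that the sources $C(H_i)$ chain together: each $C(H_{i})$ should lie in $H_{i-1}\setminus\{C(H_{i-1})\}$-type positions. Closing the chain with $h_k\in H$ and $h_k\neq c$ would then give a cycle in $C$, which is the desired contradiction.

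To make the chain valid, I expect to need the containment $[h_{j},h_{j+1}]\subseteq H_{j+1}$ together with $C(H_{j+1})\in[h_{j+1},\ \cdot\,]$. The containment is the argument of \Cref{lem:Hreg}(1), which uses acyclicity of $A$ and $B$. The remaining difficulty is to ensure that $C(H_{j+1})$ does not jump past the relevant vertices, in particular into the $^{+}$ part of a cyclic hyperedge. I would pass to the first index where the chain breaks and restrict to an interval $D$ via \Cref{intervalrestriction}. There, intersection-closure of $(\HH|_D)_{\reg}$ should supply a regular hyperedge $H_i\cap H_{i+1}$ (restricted) containing $h_i$. Its $C$-source then yields either a shortcut $(H,c)$-sequence, contradicting minimality of the chosen sequence, or a cycle in $C$.

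This minimality step is the main obstacle: handling the cyclic hyperedges whose $C$-source wraps around. My plan there is a case analysis mirroring \Cref{lem:wrap2} and \Cref{lem:existencehj}, using concatenation arguments to reduce each wraparound to a contradiction with the maximality in the definition of $X(H,\ell)$.
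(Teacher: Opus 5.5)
Your upper-bound check and your reduction are correct and match the paper. Since $c=C(H)\ge H_{AB}$ is admissible, $X(H)\le X(H,c)$. The paper likewise derives a contradiction from an $(H,c)$-sequence with $h_k>c$ for an acyclic upper bound (called $X'$ there). However, the minimality step is the whole content of the proposition, and you only announce it. The mechanism you sketch does not work as stated, for three reasons.

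(i) Your chain runs the wrong way. An $(H,c)$-sequence gives $h_{i-1}\in H_i$ and $C(H_i)\ge h_i>h_{i-1}$. So the arrows you can hope for are $C(H_i)\xrightarrow{H_i}C(H_{i-1})$, i.e.\ $C(H_{i-1})\in H_i$; nothing makes $C(H_i)\in H_{i-1}$. The cycle to aim for is $C(H)\xrightarrow{H}C(H_k)\xrightarrow{H_k}\cdots\xrightarrow{H_2}C(H)$. Its last arrow is free, since $c=h_1\in H_2$ and $C(H_2)>c$; its first arrow needs $C(H_k)\in H$. Your closing condition ``$h_k\in H$'' belongs to this backward cycle, not to the forward one you describe.

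(ii) The containment $[h_j,h_{j+1}]\subseteq H_{j+1}$ does not follow from the argument of \Cref{lem:Hreg}(1) when $H\in\Hc$, because that argument needs $h_{j+1}\in H$. Take $n=8$ and $\HH=\{H,H_2,H_3\}$ with $H=\{7,8,1,2\}$, $H_2=\{5,6,7,8,1,2,3\}$, $H_3=[5,7]$; this satisfies all hypotheses. Let $S_A=(2,5,5)$ and $S_B=(1,1,7)$. Then $(H,2),(H_2,5),(H_3,7)$ is an $(H,2)$-sequence with $4\notin H_2$. For $C\ge A,B$ with $C(H)=2$ and $C(H_2)\in\{5,6\}$, your chain breaks at its first link, and $H\cap H_2=H$ gives nothing. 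The contradiction is the backward cycle $C(H)\xrightarrow{H}C(H_3)\xrightarrow{H_3}C(H_2)\xrightarrow{H_2}C(H)$.

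(iii) Intersection closure applied to consecutive terms at an interior break does not produce a cycle. Suppose $C(H_{i-1})\notin H_i$ and $K\subseteq H_{i-1}\cap H_i$. Then $C(K)$ receives arrows from $C(H_{i-1})$ and $C(H_i)$, but since $C(H_{i-1})\notin K$ it yields no arrow back into $C(H_{i-1})$. Interior breaks are repaired by rerouting instead: jump to a later $H_j$ with $C(H_{i-1})\in[h_{j-1},h_j]\subseteq H_j$, as the paper does for $C(H_2)$.

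What is missing is the dichotomy the paper's proof rests on.

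\emph{First branch.} Some $\bar H\ni c$ has $c<\bar H_{AB}$ and $[c,\bar H_{AB}]\subseteq H$. For regular $H$ this always happens with $\bar H=H_2$, because $[c,h_k]\subseteq H$. One restricts to an interval $D$ on which $H$ and $\bar H$ both become regular. Closure of $(\HH|_D)_{\reg}$ under intersection then gives $H_\cap$ with $[c,\bar H_{AB}]\subseteq H_\cap\subseteq H\cap\bar H$. By acyclicity of $A$ and $B$, $c<\bar H_{AB}\le(H_\cap)_{AB}\le C(H_\cap)$, which is a $2$-cycle of $C$ on $H$ and $H_\cap$. This is the only place the hypothesis enters. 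It is applied to $H$ itself rather than to consecutive terms of a sequence, and \Cref{intervalrestriction} plays no role.

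\emph{Second branch.} No such $\bar H$ exists. This forces $H\in\Hc$, $c\in H^-$ and $h_k\in H^+$, so $C(H_k)\ge h_k$ automatically lies in $H^+$. A minimal sequence then yields the backward cycle $C(H)\xrightarrow{H}C(H_k)\to\cdots\to C(H_j)\xrightarrow{H_j}C(H_2)\xrightarrow{H_2}C(H)$, with no use of intersection closure.

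Your deferred ``case analysis mirroring \Cref{lem:wrap2} and \Cref{lem:existencehj}'' supplies neither branch.
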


\begin{proof}
    By construction, $X > A, B$. Assume that the pseudo-join $X$ of $A$ and $B$ is not their join. Then there exists an acyclic orientation $X'$ of $\HH$ such that $X' > A, B$, and a hyperedge $H \in \HH$ such that $H_{AB} \leq X'(H) < X(H)$. 
    
    \emph{Case 1}.
        Suppose that there exists an hyperedge $\Bar{H}\in \HH$ such that $X'(H)\in \Bar{H}$, $X'(H)<\Bar{H}_{AB}$ and $[X'(H),\Bar{H}_{AB}] \subseteq H$. Since $A$ and $B$ are acyclic and $\Bar{H}_{AB} \in H$, we have that $\{A(H), B(H)\} \nsubseteq \Bar{H}$. Therefore, $X'(H)$ must be in $\Bar{H}^+$ if $\Bar{H} \in \Hc$.

        By abuse of notation, take $H^- = H$ and $H^+ = \emptyset$ when $H \in \Hr$, and $\Bar{H}^+ = \Bar{H}$ and $\Bar{H}^- = \emptyset$ when $\Bar{H} \in \Hr$. In the following, note that taking $\min,\max$ operation of empty sets is disregarded.
        
        Consider the case when $[X'(H), \Bar{H}_{AB}] \subseteq H^-$ and refer the reader to \Cref{fig:prop5.11} (left). Take $D = [n]\setminus (H^+ \cup \Bar{H}^-)$. This implies the following restrictions:

        \begin{align*}
            H|_D &= [\max(\max(\Bar{H}^-)+1, \min(H)), \max(H^-)]\subseteq H^-, \\
            \Bar{H}|_D &= [\min(\Bar{H}^+), \min(\max(\Bar{H}), \min(H^+)-1)] \subseteq \Bar{H}^+.
        \end{align*}
        
        Thus $H|_D, \Bar{H}|_D \in {(\HH|_D)}_{\reg}$. Let $H_\cap|_D:= H|_D \cap \Bar{H}|_D$. Since $X'(H)\in \Bar{H}^+$, it follows that $\Bar{H}_{AB}\in \Bar{H}^+$ and the acyclicity of $A$ and $B$ implies that $H_\cap|_D\neq H|_D$. Since $X'$ is acyclic and also $X'(H)\in \Bar{H}^+$ and $\Bar{H}_{AB}\in H^-$, it follows that $X'(H)<\Bar{H}_{AB}<X'(\Bar{H})$. This implies that $H_\cap|_D\neq \Bar{H}|_D$. Since ${(\HH|_D)}_{\reg}$ is closed under intersection, $H_\cap|_D \in \HH$.    
        
        Consider the case when $[X'(H), \Bar{H}_{AB}] \subseteq H^+$ and refer the reader to \Cref{fig:prop5.11} (right).  For the same reason as above, $X'(H)\in \Bar{H}^+$. This implies that $H\in \Hr$. Indeed, otherwise the fact that $X'(H)\in H^+,\Bar{H}^+$ and $X(\Bar{H})<\Bar{H}_{AB}$ would contradict the acyclicity of $X'$. Take $D = [n]\setminus \Bar{H}^-$. This leads to the following restrictions:

        \begin{align*}
            H|_D &= [\max(\max(\Bar{H}^+)+1, \min(H)), \max(H)]\subseteq H, \\
            \Bar{H}|_D &= \Bar{H}^+.
        \end{align*}
        
        Again, $H_\cap|_D:= H|_D \cap \Bar{H}|_D \in {(\HH|_D)}_{\reg}$. Since $X'(H)\in \Bar{H}^+$, it follows that $\Bar{H}_{AB}\in \Bar{H}^+$. Moreover, since $H\in \Hr$, the acyclicity of $A$ and $B$ implies that $H_\cap|_D\neq H|_D$. Since $X'$ is acyclic and also $X'(H)\in \Bar{H}^+$ and $\Bar{H}_{AB}\in H$, it follows that $X'(H)<\Bar{H}_{AB}<X'(\Bar{H})$. This implies that $H_\cap|_D\neq \Bar{H}|_D$. Since ${(\HH|_D)}_{\reg}$ is closed under intersection, $H_\cap|_D \in \HH$.

    \begin{figure}[H]
 \scalebox{0.8}{\begin{tikzpicture}[decoration={markings, mark=at position 0.5 with {\arrow{>}}}]
     
 \node at (-4,0) {\begin{tikzpicture}
 
\def\R{1.75}
\def\RR{1.5}

\draw[dashed,gray] (0,0) circle (\R);


\draw[line width=1.75pt, draw=Black] (0,0) ++(240:2) arc[start angle=240, end angle=480, radius=2];

\draw[line width=1.75pt, draw=Black!50] (0,0) ++(0:2.25) arc[start angle=0, end angle=300, radius=2.25];


\draw[dashed] ({1.75*cos(280)},{1.75*sin(280)}) to ({2.7*cos(280)},{2.7*sin(280)});
\node at ({2.95*cos(280)},{2.95*sin(280)}){$1$};

\draw[dashed] ({1.75*cos(260)},{1.75*sin(260)}) to ({2.7*cos(260)},{2.7*sin(260)});
\node at ({2.95*cos(260)},{2.95*sin(260)}){$n$};

\draw[dashed] ({1.75*cos(340)},{1.75*sin(340)}) to ({2.7*cos(340)},{2.7*sin(340)});
\node at ({3.15*cos(340)},{3.15*sin(340)}){$H_{AB}$};

\draw[dashed] ({1.75*cos(20)},{1.75*sin(20)}) to ({2.7*cos(20)},{2.7*sin(20)});
\node at ({3.15*cos(20)},{3.15*sin(20)}){$X'(H)$};

\draw[dashed] ({1.75*cos(100)},{1.75*sin(100)}) to ({2.7*cos(100)},{2.7*sin(100)});
\node at ({2.95*cos(100)},{2.95*sin(100)}){$\Bar{H}_{AB}$};

\draw[dashed] ({1.75*cos(180)},{1.75*sin(180)}) to ({2.7*cos(180)},{2.7*sin(180)});
\node at ({3.25*cos(180)},{3.25*sin(180)}){$X(\Bar{H})$};
\newcommand{\hwplotB}{\raisebox{2pt}{\tikz{\draw[black,dashed,line width=1.75pt](0,0) -- (4mm,0);}}}
\node at (0,0) {$H := \color{Black}{\bullet}$};
\node at (0,0.5) {$\Bar{H} := \color{Black!50}{\bullet}$};
\node at (0.18,-0.5) {$D :=$ \hwplotB};

\draw[dashed,line width=1.75pt, draw=black] (0,0) ++(-40:1.75) arc[start angle=-40, end angle=220, radius=1.75];

\draw[] ({1.75*cos(280)},{1.75*sin(280)}) to ({1.5*cos(280)},{1.5*sin(280)});
\draw[] ({1.75*cos(300)},{1.75*sin(300)}) to ({1.5*cos(300)},{1.5*sin(300)});
\draw[] ({1.75*cos(320)},{1.75*sin(320)}) to ({1.5*cos(320)},{1.5*sin(320)});
\draw[] ({1.75*cos(340)},{1.75*sin(340)}) to ({1.5*cos(340)},{1.5*sin(340)});
\draw[] ({1.75*cos(0)},{1.75*sin(0)}) to ({1.5*cos(0)},{1.5*sin(0)});
\draw[] ({1.75*cos(20)},{1.75*sin(20)}) to ({1.5*cos(20)},{1.5*sin(20)});
\draw[] ({1.75*cos(40)},{1.75*sin(40)}) to ({1.5*cos(40)},{1.5*sin(40)});
\draw[] ({1.75*cos(60)},{1.75*sin(60)}) to ({1.5*cos(60)},{1.5*sin(60)});
\draw[] ({1.75*cos(80)},{1.75*sin(80)}) to ({1.5*cos(80)},{1.5*sin(80)});
\draw[] ({1.75*cos(100)},{1.75*sin(100)}) to ({1.5*cos(100)},{1.5*sin(100)});
\draw[] ({1.75*cos(120)},{1.75*sin(120)}) to ({1.5*cos(120)},{1.5*sin(120)});
\draw[] ({1.75*cos(140)},{1.75*sin(140)}) to ({1.5*cos(140)},{1.5*sin(140)});
\draw[] ({1.75*cos(160)},{1.75*sin(160)}) to ({1.5*cos(160)},{1.5*sin(160)});
\draw[] ({1.75*cos(180)},{1.75*sin(180)}) to ({1.5*cos(180)},{1.5*sin(180)});
\draw[] ({1.75*cos(200)},{1.75*sin(200)}) to ({1.5*cos(200)},{1.5*sin(200)});
\draw[] ({1.75*cos(220)},{1.75*sin(220)}) to ({1.5*cos(220)},{1.5*sin(220)});
\draw[] ({1.75*cos(240)},{1.75*sin(240)}) to ({1.5*cos(240)},{1.5*sin(240)});
\draw[] ({1.75*cos(260)},{1.75*sin(260)}) to ({1.5*cos(260)},{1.5*sin(260)});

\end{tikzpicture}};


 \node at (4,0) {\begin{tikzpicture}
 
\def\R{1.75}
\def\RR{1.5}

\draw[dashed,gray] (0,0) circle (\R);

\draw[line width=1.75pt, draw=Black] (0,0) ++(320:2) arc[start angle=320, end angle=480, radius=2];

\draw[line width=1.75pt, draw=Black!50] (0,0) ++(0:2.25) arc[start angle=0, end angle=300, radius=2.25];


\draw[dashed] ({1.75*cos(280)},{1.75*sin(280)}) to ({2.7*cos(280)},{2.7*sin(280)});
\node at ({2.95*cos(280)},{2.95*sin(280)}){$1$};

\draw[dashed] ({1.75*cos(260)},{1.75*sin(260)}) to ({2.7*cos(260)},{2.7*sin(260)});
\node at ({2.95*cos(260)},{2.95*sin(260)}){$n$};

\draw[dashed] ({1.75*cos(340)},{1.75*sin(340)}) to ({2.7*cos(340)},{2.7*sin(340)});
\node at ({3.15*cos(340)},{3.15*sin(340)}){$H_{AB}$};

\draw[dashed] ({1.75*cos(20)},{1.75*sin(20)}) to ({2.7*cos(20)},{2.7*sin(20)});
\node at ({3.15*cos(20)},{3.15*sin(20)}){$X'(H)$};

\draw[dashed] ({1.75*cos(100)},{1.75*sin(100)}) to ({2.7*cos(100)},{2.7*sin(100)});
\node at ({2.95*cos(100)},{2.95*sin(100)}){$\Bar{H}_{AB}$};

\draw[dashed] ({1.75*cos(180)},{1.75*sin(180)}) to ({2.7*cos(180)},{2.7*sin(180)});
\node at ({3.25*cos(180)},{3.25*sin(180)}){$X(\Bar{H})$};

\newcommand{\hwplotB}{\raisebox{2pt}{\tikz{\draw[black,dashed,line width=1.75pt](0,0) -- (4mm,0);}}}
\node at (0,0) {$H := \color{Black}{\bullet}$};
\node at (0,0.5) {$\Bar{H} := \color{Black!50}{\bullet}$};
\node at (0.18,-0.5) {$D :=$ \hwplotB};

\draw[dashed,line width=1.75pt, draw=black] (0,0) ++(-40:1.75) arc[start angle=-40, end angle=260, radius=1.75];

\draw[] ({1.75*cos(280)},{1.75*sin(280)}) to ({1.5*cos(280)},{1.5*sin(280)});
\draw[] ({1.75*cos(300)},{1.75*sin(300)}) to ({1.5*cos(300)},{1.5*sin(300)});
\draw[] ({1.75*cos(320)},{1.75*sin(320)}) to ({1.5*cos(320)},{1.5*sin(320)});
\draw[] ({1.75*cos(340)},{1.75*sin(340)}) to ({1.5*cos(340)},{1.5*sin(340)});
\draw[] ({1.75*cos(0)},{1.75*sin(0)}) to ({1.5*cos(0)},{1.5*sin(0)});
\draw[] ({1.75*cos(20)},{1.75*sin(20)}) to ({1.5*cos(20)},{1.5*sin(20)});
\draw[] ({1.75*cos(40)},{1.75*sin(40)}) to ({1.5*cos(40)},{1.5*sin(40)});
\draw[] ({1.75*cos(60)},{1.75*sin(60)}) to ({1.5*cos(60)},{1.5*sin(60)});
\draw[] ({1.75*cos(80)},{1.75*sin(80)}) to ({1.5*cos(80)},{1.5*sin(80)});
\draw[] ({1.75*cos(100)},{1.75*sin(100)}) to ({1.5*cos(100)},{1.5*sin(100)});
\draw[] ({1.75*cos(120)},{1.75*sin(120)}) to ({1.5*cos(120)},{1.5*sin(120)});
\draw[] ({1.75*cos(140)},{1.75*sin(140)}) to ({1.5*cos(140)},{1.5*sin(140)});
\draw[] ({1.75*cos(160)},{1.75*sin(160)}) to ({1.5*cos(160)},{1.5*sin(160)});
\draw[] ({1.75*cos(180)},{1.75*sin(180)}) to ({1.5*cos(180)},{1.5*sin(180)});
\draw[] ({1.75*cos(200)},{1.75*sin(200)}) to ({1.5*cos(200)},{1.5*sin(200)});
\draw[] ({1.75*cos(220)},{1.75*sin(220)}) to ({1.5*cos(220)},{1.5*sin(220)});
\draw[] ({1.75*cos(240)},{1.75*sin(240)}) to ({1.5*cos(240)},{1.5*sin(240)});
\draw[] ({1.75*cos(260)},{1.75*sin(260)}) to ({1.5*cos(260)},{1.5*sin(260)});

\end{tikzpicture}};
\end{tikzpicture}}
\caption{Examples of the first case discussed in the proof of \Cref{join}. On the left, an instance of the first subcase of \emph{Case 1} and on the right, an instance of the second subcase of \emph{Case 1}.}
\label{fig:prop5.11}\end{figure}

    For both scenarios, let $H_\cap = H_\cap|_D \in \HH$ and observe that $[X'(H),\Bar{H}_{AB}] \subseteq H_\cap \subseteq H \cap \Bar{H}$. The acyclicity of $A$ and $B$ implies that $(H_\cap)_{AB}\geq (\Bar{H})_{AB}$. Since $X'>A,B$, we obtain the following chain of inequalities:

    $$
    X'(H) < \Bar{H}_{AB} \leq (H_\cap)_{AB} \leq X'(H_\cap).
    $$

    This contradicts the acyclicity of $X'$ with a cycle on $H$ and $H_\cap$.

    \emph{Case 2}.
    Suppose that there is no hyperedge $\Bar{H}\in \HH$ such that $X'(H)\in \Bar{H}$, $X'(H)<\Bar{H}_{AB}$ and $[X'(H),\Bar{H}_{AB}] \subseteq H$. This implies that $H\in \Hc$. Let $(H_i,H_i)_{i=1}^k$ be a minimal sequence that realizes $X(H,X'(H))$. Minimality here means that $h_i\notin H_{i+2}$ for $i\in [k-2]$. This implies that $[H_i,\max(H_{i+1})] \subseteq H_{i+1}$ for each hyperedge $H_i$ where $3\leq i \leq k$. Since $X'(H_i)\geq (H_i)_{AB}$, there exists the following path in the orientation $X'$:

    $$X'(H) \xrightarrow{H}X'(H_k) \xrightarrow{H_k} X'(H_{k-1}) \xrightarrow{H_{k-1}} \cdots \xrightarrow{H_4} X'(H_3)$$ 

    Since $X'(H)<h_2 \leq X'(H_2)\leq X'(H_k)$, there exists $3\leq j \leq k$ such that $ X'(H_2)\in H_j$. Since $X'(H)\in H_2$, we find the following cycle in $X'$:

    $$X'(H) \xrightarrow{H}X'(H_k) \xrightarrow{H_k} X'(H_{k-1}) \xrightarrow{H_{k-1}} \cdots \xrightarrow{H_{j+1}} X'(H_j) \xrightarrow{H_j} X'(H_2) \xrightarrow{H_2} X'(H)$$

    This contradicts the acyclicity of $X'$. Therefore, no such acyclic $X'$ exists and the pseudo-join $X$ of $A$ and $B$ is indeed their join.
\end{proof}

\begin{theorem} \label{thmsuf}
    The hypergraphic poset $P_\HH$ is a lattice if ${(\HH|_D)}_{\reg}$ is closed under intersection and there exist a fix for every hugging quadruple of $\HH|_D$ for any interval $D = [x, y]$ where $1 \le x < y \le n$.
\end{theorem}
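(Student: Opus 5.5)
The plan is to assemble the results of \Cref{sec:backward}. Since $P_\HH$ is finite, it suffices to show that every pair of elements has a join, together with a global minimum. Then the set of common lower bounds of any pair is nonempty, and its join is the meet.

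\textbf{Bottom element.} The orientation $\mathcal{O}_{\mathrm{id}}$ given by the identity permutation sends every $H$ to $\min H$. It is acyclic by \Cref{adjacent}. By \Cref{sourcecharacterization} it lies below every acyclic orientation, since every source satisfies $A(H) \ge \min H$.

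\textbf{Comparable pairs.} If $A \le B$, the join is simply $B$.

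\textbf{Incomparable pairs.} For incomparable $A, B$ (in particular distinct), I will take the pseudo-join $X^{AB}$ and proceed in two steps.

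\emph{Step 1: $X^{AB}$ is acyclic.} Suppose instead that $X^{AB}$ is cyclic. By \Cref{prop:minimalcycle}, a minimal cycle has length $2$, say on $H, H'$. By \Cref{prop:twocycle}, some restriction $\HH|_D$ to an interval $D=[x,y]$ then has a hugging quadruple without a fix. This contradicts the hypothesis. The case of two regular hyperedges was dispatched there via \Cref{lem:Hreg}; that argument rests on $X(H)=X(H,H_{AB})$ and concatenation. I would double-check that this case indeed yields a contradiction outright, and does not silently require intersection-closure.

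\emph{Step 2: $X^{AB}$ is the join.} Now $X^{AB} \in P_\HH$. Since every $(H,\ell)$-sequence ends at $h_k \ge \ell \ge H_{AB}$, we get $X^{AB}(H) \ge H_{AB}$. Hence $X^{AB}$ is an upper bound of $A$ and $B$ by \Cref{sourcecharacterization}. \Cref{join} then uses the intersection-closure hypothesis on each $(\HH|_D)_{\reg}$ to show that no acyclic upper bound $X'$ has $X'(H) < X^{AB}(H)$ for some $H$. Thus $X^{AB} \le X'$ coordinatewise, and $X^{AB}$ is the least upper bound.

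\textbf{Conclusion.} With a bottom element and all joins, the finite poset $P_\HH$ is a lattice.

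The main obstacle is Step 1. Acyclicity of the pseudo-join is the only place the hugging-quadruple hypothesis enters, and it relies on the delicate iterative restriction in \Cref{prop:twocycle-reg-cyc}. The key point to verify there is termination: the nested intervals $D^{i}$ shrink, and the fixes $L^i$ are distinct, so some quadruple must lack a fix. The remainder is bookkeeping.
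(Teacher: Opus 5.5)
Your proposal is correct and follows the same route as the paper. The paper likewise obtains acyclicity of $X^{AB}$ from \Cref{prop:minimalcycle} and \Cref{prop:twocycle}, invokes \Cref{join} for incomparable pairs, takes the larger element for comparable pairs, and concludes from the boundedness of $P_\HH$; your explicit bottom element $H \mapsto \min H$ just makes that last point precise. The regular--regular $2$-cycle you flag does close outright without intersection-closure: if $X(H)<X(H')$, then either $H'_{AB}>X(H)$, or \Cref{lem:Hreg}(1) puts $X(H)$ into some $H'_{j+1}$ of the sequence realizing $X(H',H'_{AB})$, and in both cases concatenation gives an $(H,H_{AB})$-sequence ending at $X(H')>X(H)$, contradicting $X(H)=X(H,H_{AB})$.
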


\begin{proof}
    \Cref{join} proves the existence of a join for any incomparable acyclic orientations $A$ and $B$ of $\HH$. If $A$ and $B$ are comparable then the larger element is the join. Since $P_\HH$ is bounded, we can conclude that $P_\HH$ is a lattice.
\end{proof}

\section{Meet and Join Description}\label{sec:join-meet-description}

In this final section, we generalize the join to more than two acyclic orientations and formalize the meet description in a similar manner.

Let $\HH$ be a cyclic interval hypergraph on $[n]$. For every hyperedge $H \in \HH$ and a set $\mathcal{S}$ of $s$ distinct acyclic orientations $A_1,\dots,A_s$ of $\HH$, let \defn{$H_{\mathcal{S}}$}$:=\max\{A \ | \ A\in \mathcal{S}\}$ and \defn{$\prescript{}{\mathcal{S}}{H}$}$:=\min\{A \ | \ A\in \mathcal{S}\}$. 

\begin{definition}
Let $H \in \HH$ and $\ell \in H$. Let $\mathcal{S}$ be a set of $s$ distinct acyclic orientations $A_i$ of $\HH$. We say that a sequence $(H_i, h_i)_{i = 1}^k$ where $H_i \in \HH$ and $h_i \in H_i$ is an \defn{$(H, \ell)$ join sequence with respect to the $\mathcal{S}$} if the following conditions are satisfied:
\begin{itemize}
    \item $H_1 = H$
    \item $h_1 = \ell$ and $h_i = (H_i)_{\mathcal{S}}$ if $i \ne 1$
    \item $h_i \in H_{i+1}$ and $h_i < h_{i+1}$ for every $i \in [k-1]$
    \item $h_k \in H$
\end{itemize}
Moreover, let 
\[X^{\mathcal{S}}(H, \ell) = \max \{h_k\ | \ (H_i, h_i)_{i = 1}^k \text{ is an $(H, \ell)$ join sequence with respect to $A$ and $B$}\}.
\]
Define 

\[
X^{\mathcal{S}}(H) = \min_{\ell \in H, \ \ell \ge H_{\mathcal{S}}} \{X^\mathcal{S}(H, \ell)\}.
\]

\end{definition}

\begin{definition}
Let $H \in \HH$ and $\ell \in H$. Let $\mathcal{S}$ be a set of $s$ distinct acyclic orientations $A_i$ of $\HH$. We say that a sequence $(H_i, h_i)_{i = 1}^k$ where $H_i \in \HH$ and $h_i \in H_i$ is an \defn{$(H, \ell)$ meet sequence with respect to the $\mathcal{S}$} if the following conditions are satisfied:
\begin{itemize}
    \item $H_1 = H$
    \item $h_1 = \ell$ and $h_i = \prescript{}{\mathcal{S}}{(H_i)}$ if $i \ne 1$
    \item $h_i \in H_{i+1}$ and $h_i > h_{i+1}$ for every $i \in [k-1]$
    \item $h_k \in H$
\end{itemize}
Moreover, let 
\[\prescript{\mathcal{S}}{}{X}(H, \ell) = \min \{h_k\ | \ (H_i, h_i)_{i = 1}^k \text{ is an $(H, \ell)$ meet sequence with respect to the $A_i$}\}.
\]
Define 

\[
\prescript{\mathcal{S}}{}{X}(H) = \max_{\ell \in H, \ \ell \ge \prescript{}{\mathcal{S}}{H}} \{\prescript{\mathcal{S}}{}{X}(H, \ell)\}.
\]

\end{definition}

\begin{proposition}
    Let $\HH$ be a cyclic interval hypergraph such that ${(\HH|_D)}_\text{reg}$ is closed under intersection and there exist a fix for every hugging quadruple of $\HH|_D$ for any interval $D=[x,y]$ where $1\leq x < y \leq n$. For any set $\mathcal{S}$ of acyclic orientation $A_1,\dots,A_s$ of $\HH$ and $H\in \HH$:

    $$\begin{matrix}
        \bigvee_{i\in [s]}A_i= X^{\mathcal{S}}(H) && \text{and} && \bigwedge_{i\in [s]}A_i= \prescript{\mathcal{S}}{}{X}(H)
    \end{matrix}$$
\end{proposition}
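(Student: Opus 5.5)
The plan is to treat the join and the meet separately. The meet formula will follow from the join formula by a reversal symmetry, so almost all of the work is in the join. By \Cref{thmsuf}, $P_\HH$ is already known to be a lattice, so $\bigvee_{i\in[s]}A_i$ exists. It therefore suffices to show three things about $X^{\mathcal S}$: it is an acyclic orientation, it satisfies $X^{\mathcal S}\ge A_i$ for every $i$, and it satisfies $X^{\mathcal S}\le X'$ for every acyclic upper bound $X'$ of $\mathcal S$. The second point is immediate from \Cref{sourcecharacterization}, since $X^{\mathcal S}(H)\ge H_{\mathcal S}\ge A_i(H)$.

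For the third point I would rerun the proof of \Cref{join} verbatim with $H_{AB}$ replaced by $H_{\mathcal S}$. Both Case 1 and Case 2 of that proof use only three facts. First, $X'(K)\ge K_{AB}$ for every $K$, which becomes $X'(K)\ge K_{\mathcal S}$. Second, statements of the form ``$\{A(H),B(H)\}\not\subseteq \Bar H$, else $A$ or $B$ is cyclic''; these become ``some $A_i$ is cyclic''. For this it is enough to pick the orientation $A_i$ realizing the relevant maximum $\Bar H_{\mathcal S}$ and observe that all $A_j(H)\le H_{\mathcal S}$ lie on the same side. Third, closure of $(\HH|_D)_{\reg}$ under intersection. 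None of these depends on $|\mathcal S|=2$.

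The main obstacle is acyclicity of $X^{\mathcal S}$, that is, generalizing \Cref{lem:Hreg}, \Cref{lem:wrap2}, \Cref{prop:minimalcycle}, \Cref{lem:twocycle-reg-cyc}, \Cref{lem:existencehj}, \Cref{prop:twocycle-reg-cyc} and \Cref{prop:twocycle}. In most steps the two orientations enter only through the phrase ``at least one of $A,B$ is cyclic''. These steps go through once we fix, for each hyperedge $K$ in a chain, an index $i(K)$ with $A_{i(K)}(K)=K_{\mathcal S}$, and note that every $A_j(K)\le K_{\mathcal S}$ lies in the same part $K^-$ or $K^+$ whenever the original argument placed both $A(K)$ and $B(K)$ there. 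The delicate step is Case 1 of \Cref{prop:minimalcycle}, where the argument splits according to which of $A$ or $B$ jumps from $H_r^-$ to $H_r^+$. Here I would first try the reduction that the cycle exhibited there lives entirely in the single orientation realizing $\Bar h_j$ (respectively $\Bar h_{j+1}$), together with hyperedges whose $\mathcal S$-sources lie in $H^-$. If it works, the same cycle is cyclic for that $A_i$. The hugging-quadruple descent in \Cref{prop:twocycle-reg-cyc} uses only sequences and $K_{\mathcal S}$-values, so it transfers unchanged, and yields a fix-free hugging quadruple in some $\HH|_D$, contradicting the hypothesis. As a fallback, if the multi-orientation bookkeeping fails, I would argue by induction on $s$, showing that $X^{\mathcal S}$ coincides with the pseudo-join of $A_s$ and $X^{\mathcal S\setminus\{A_s\}}$. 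This reduces the claim to the two-element case already proved.

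For the meet, consider the involution $i\mapsto n+1-i$. It maps cyclic interval hypergraphs to cyclic interval hypergraphs and preserves regular and cyclic hyperedges. It maps restrictions to intervals $[x,y]$ to restrictions to intervals $[n+1-y,n+1-x]$. It preserves closure of the regular part under intersection, and it maps hugging quadruples to hugging quadruples with the roles of $(I,\Ic)$ and $(J,\Jc)$ swapped, sending fixes to fixes. Acyclicity is preserved and, by \Cref{sourcecharacterization}, the order is reversed. Under this map, meet sequences become join sequences and $\prescript{}{\mathcal S}{H}$ becomes the corresponding maximum. Hence $\prescript{\mathcal S}{}{X}$ is the image of the join formula for the reflected hypergraph, which gives the meet formula.
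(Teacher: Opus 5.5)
\textbf{Verdict: genuine gap.} The paper's proof is your fallback: the join formula ``by induction using \Cref{join}'' and the meet ``by symmetry''. Your meet argument via the reflection $i\mapsto n+1-i$ correctly expands the latter. Note that it produces the range $\ell\le {}_{\mathcal S}H$ in the meet formula, which is how the printed $\ell\ge {}_{\mathcal S}H$ has to be read. Your check that the minimality half of \Cref{join} survives with $K_{\mathcal S}$ in place of $K_{AB}$ is also sound: taking $A_t$ with $A_t(\bar H)=\bar H_{\mathcal S}$ gives the needed $2$-cycle in $A_t$. The gap is the one nontrivial point, acyclicity of $X^{\mathcal S}$ for $|\mathcal S|\ge 3$, and neither of your routes establishes it.

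Your fallback is not a reduction to the two-element case. Set $Y:=X^{\mathcal S\setminus\{A_s\}}$. The pseudo-join $X^{YA_s}$ is built from the values $\max(Y(K),A_s(K))$, which can be strictly larger than $K_{\mathcal S}$ (already $X(H_1)=3>(H_1)_{AB}=2$ in \Cref{ex:pseudojoin}). So the sequences defining the two constructions differ.

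What you get for free is one inequality. $X^{YA_s}$ is acyclic by the two-element case and bounds every $A_i$, so your minimality argument gives $X^{YA_s}\ge X^{\mathcal S}$. For the reverse, you would apply the minimality argument for the pair $(Y,A_s)$ to $X'=X^{\mathcal S}$. That requires $X^{\mathcal S}$ to be acyclic; given acyclicity, $X^{\mathcal S}\ge Y$ follows because $Y$ is the join of $\mathcal S\setminus\{A_s\}$. Thus the identity $X^{\mathcal S}=X^{YA_s}$ is equivalent to acyclicity of $X^{\mathcal S}$: the induction only relocates the difficulty. The paper's one-line induction also leaves this identity implicit.

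A complete proof therefore has to carry out your primary route, and the step you left as ``if it works'' is where real work is needed. In Case 1 of \Cref{prop:minimalcycle}, the final cycle lives in the orientation $A_t$ realizing $\bar h_j$. The argument needs two facts:
\begin{enumerate}
\item $(H_r)_{\mathcal S}\in H_r^-$, to force the wrapping hyperedge $\bar H_j$;
\item $A_t(H_k)\in H_k^+$.
\end{enumerate}
With two orientations, both come from choosing $r$ and $k$ as the places where $A$ and $B$ respectively jump from $H^-$ to $H^+$; then $B(H_k)\in H_k^-$ rules out $B$ as the realizer. For a third orientation neither fact is automatic, so the choice of $r$, $k$, and of the orientations has to be redone.
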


\begin{proof}
    The join description follows by induction using \Cref{join}. The meet description follows by symmetry.
\end{proof}

\section*{Acknowledgments}
The authors thank Nantel Bergeron and Vincent Pilaud for their detailed comments on earlier drafts, and Isabella Novik for helpful feedback. This research was partially conducted during the Intensive Research Program on Combinatorial Geometries and Geometric Combinatorics, held at the Centre de Recerca Matemàtica in October-November 2025. The authors thank Juliana Curtis and other members of the ``Exploring Hypergraphic Polytopes" project for early discussions. The Intensive Research Program was funded by the Severo Ochoa and María de Maeztu Program for Centers and Units of Excellence in R\&D (CEX2020-001084-M), the Institute de Matemàtica de la Universitat de Barcelona, and the Spanish projects PID2022-137283NB-C21 and RED2024-153572-T of MICIU/AEI /10.13039/501100011033. Yirong Yang's research was partially supported by NSF grant DMS-2246399.

\bibliographystyle{alpha}
\bibliography{bibliography}

\end{document}